\def\eps{\varepsilon}
\DeclareFontFamily{U}{wncy}{}
\DeclareFontShape{U}{wncy}{m}{n}{<->wncyr10}{}
\DeclareSymbolFont{mcy}{U}{wncy}{m}{n}
\DeclareMathSymbol{\Sha}{\mathord}{mcy}{"58}
\newcounter{ctfig}
\newcommand{\A}{\mathbb{A}}
\newcommand{\p}{{\mathfrak p}}
\newcommand{\q}{{\mathfrak q}}
\renewcommand{\P}{\mathbb{P}}
\newcommand{\codim}{\operatorname{codim}}
\newcommand{\cod}{\operatorname{codim}}
\newcommand{\Hom}{\mathop{\rm Hom}}
\newcommand{\cok}{\mathop{\rm cok}}
\theoremstyle{plain}
\newtheorem{thm}{Theorem}[section]
\newtheorem{lemma}[thm]{Lemma}
\newtheorem{prop}[thm]{Proposition}
\newtheorem{cor}[thm]{Corollary}
\newtheorem{crit}[thm]{Criterion}
\theoremstyle{definition}
\newtheorem{alg}[thm]{Algorithm}
\newtheorem{example}[thm]{Example}
\newtheorem{remark}[thm]{Remark}
\newtheorem{defn}[thm]{Definition}
\newtheorem{notation}[thm]{Notation}
\newtheorem{question}[thm]{Question}
\def\O{{\mathcal O}}
\def\m{{\mathfrak m}}
\newcommand{\Spec}{\mathop{\rm Spec}}
\newcommand{\im}{\mathop{\rm im}}
\newcommand*{\medcap}{\mathbin{\scalebox{1.2}{\ensuremath{\cap}}}}%
\newcommand*{\medcup}{\mathbin{\scalebox{1.2}{\ensuremath{\cup}}}}%
\begin{document}
\bibliographystyle{plain}
\bibstyle{plain}

\title[Crepant resolutions of double covers]{On the Cynk-Hulek criterion for crepant resolutions of double covers}

\author{Colin Ingalls}
\address[Colin Ingalls]{School of Mathematics and Statistics, 4302 Herzberg Laboratories,
  1125 Colonel By Drive, Carleton University, Ottawa, ON K1S 5B6, Canada}
\email{colin.ingalls@gmail.com}

\author{Adam Logan}
\address[Adam Logan]{The Tutte Institute for Mathematics and Computation,
P.O. Box 9703, Terminal, Ottawa, ON K1G 3Z4, Canada} 
\address[Adam Logan]{School of Mathematics and Statistics, 4302 Herzberg Laboratories,
  1125 Colonel By Drive, Carleton University, Ottawa, ON K1S 5B6, Canada}
\email{adam.m.logan@gmail.com (corresponding author)}

\subjclass[2020]{14B05; 14E20, 13F20}
\keywords{double covers, crepant resolution, primary decomposition, Calabi-Yau varieties}
\date{\today}

\begin{abstract}
  A collection $S = \{D_1,\ldots, D_n\}$ of divisors on a smooth
  variety $X$ is an {\em arrangement} if the intersection of every subset
  of $S$ is smooth.  We show that a double cover of $X$ ramified on
  an arrangement has a crepant resolution under additional hypotheses.
  Namely, we assume that all intersection components that change the
  canonical divisor when blown up satisfy are {\em splayed}, a property
  of the tangent spaces of the components first studied by Faber.
  This strengthens a result of Cynk and Hulek, which
  requires a stronger hypothesis on the intersection components.
  Further, we study the singular subscheme of the
  union of the divisors in $S$ and prove that it has a primary
  decomposition where the primary components are supported on exactly
  the subvarieties which are blown up in the course of constructing
  the crepant resolution of the double cover.
\end{abstract}

\maketitle

\section{Introduction}\label{sec:intro}

In this paper we will study {\em crepant} resolutions of singularities of
double covers.  These are resolutions that do not affect the canonical divisor.
Such resolutions have been studied for many reasons: for example, a large
number of interesting examples of rigid Calabi-Yau varieties have been
constructed as double covers of rational varieties as in \cite{meyer},
\cite{csv}.  In addition, crepant resolutions are very important in the
Mori program \cite{matsuki} and are related to derived equivalence as in
\cite{bkm}.

Recall that the singular locus of the double cover of a smooth variety
branched along a union of smooth divisors is supported above the intersection
of two or more of the divisors.  Thus we will need to understand how the
behaviour of intersections of divisors changes under blowing up.
In order to state our first main result, we introduce the concept of a
splayed set of divisors, which is due to Faber \cite{faber}.

\begin{defn}\label{def:arrangement}
  Given a variety $X$, let $S = \{D_1, \dots, D_n\}$ be a set of
  divisors on $X$ such that the intersection of every subset of $S$ is
  smooth.  Then $S$ 
  constitutes an {\em arrangement} of divisors on $X$.
\end{defn}

\begin{defn}\label{def:splayed}\cite[Definition~2.3]{faber}
  Let $S$ be an arrangement on $X$ and let $p \in \medcap S$.  We say that
  $S$ is {\em splayed} at $p$ if we can write  $S = S_{1,p} \cup S_{2,p}$, where
  the $S_{i,p}$ are nonempty and
  $\medcap_{i \in S_{1,p}} T_p({D_i}) + \medcap_{j \in S_{2,p}} T_p({D_j}) = T_p(X)$.
  If $S$ is splayed at every point, it is {\em splayed}.  In particular,
  we say that $S$ is splayed if $\medcap S = \emptyset$.
\end{defn}

We now describe a sufficient condition for crepant resolutions to exist and
an algorithmic procedure for constructing them in cases satisfying our
condition.  Both of these are generalizations of the criterion
given by Cynk and Hulek \cite[Proposition 5.6]{cynk-hulek}.

\begin{defn}\label{def:admissible}
Let $C$ be an irreducible component of $\medcap_{i \in T} D_i$, where
$T \subset \{1,\dots,n\}$, and let $T_C \subset \{1,\dots,n\}$ be the
set of $i$ with $C \subseteq D_i$.  We then say that $C$ is
{\em admissible} if $|T_C| - 2\,\codim C \in \{-1,-2\}$.
\end{defn}

The justification for this definition is that the blowup of the base along
such a subvariety pulls back to a crepant blowup of the double cover,
as we explain in more detail in Proposition \ref{prop:admissible-is-crepant}.
We now state our main theorem.
\begin{thm} (Thm.~\ref{thm:resolve}) 
  Let $S = \{D_1, \dots, D_k\}$ be an arrangement on $X$.  Suppose that for
  all components $C$ of intersections $\medcap I$ for $I \subseteq S$, either
  $I$ is splayed along $C$ or the intersection is admissible.
  Then all double covers with branch locus $S$ admit a crepant resolution.
\end{thm}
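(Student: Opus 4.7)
The plan is to argue by induction on the arrangement's complexity, measured by the lexicographic pair $(c_{\max}, N)$ where $c_{\max}$ is the largest codimension of an admissible intersection component of $S$ and $N$ is the number of admissible components of codimension $c_{\max}$. The inductive step removes one admissible component by a blowup; the base case is that no admissible components remain, in which case every intersection component is splayed by hypothesis, and I would treat the splayed situation separately.

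For the inductive step, I would choose an admissible component $C$ of maximal codimension and pass to $\widetilde{X} = \mathop{\rm Bl}_C X$ with exceptional divisor $E$. By Proposition~\ref{prop:admissible-is-crepant}, the induced modification of the double cover $Y$ is crepant, so it suffices to resolve the new double cover over $\widetilde X$ branched on the arrangement $\widetilde S$ built from the strict transforms $\widetilde{D_i}$ together, if the parity of $|T_C|$ requires it, with $E$. The key technical lemma to establish is: \emph{$\widetilde S$ is again an arrangement, every intersection component of $\widetilde S$ is splayed or admissible, and the invariant $(c_{\max}, N)$ strictly decreases.} I would prove this by a local calculation at each $p \in C$: using that $S$ is an arrangement, choose étale coordinates in which the $D_i$ through $C$ become coordinate hyperplanes; then in each chart of the blowup enumerate the components of $\bigcap \widetilde I$ for $\widetilde I \subseteq \widetilde S$ and check either the splayed tangent-space identity of Definition~\ref{def:splayed} or the parity identity $|T_{C'}| - 2\,\codim C' \in \{-1,-2\}$ of Definition~\ref{def:admissible} for each such component.

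For the base case, where every intersection component of $S$ is splayed, I would exploit the local product structure forced by splayedness: at each intersection point $p$, the splitting $S = S_{1,p} \cup S_{2,p}$ with transverse tangent-space sum yields an étale-local isomorphism $(X, \bigcup D_i) \cong (X_1 \times X_2, (X_1 \times B_2) \cup (B_1 \times X_2))$, where each factor arrangement has strictly fewer divisors through the corresponding point. The double-cover equation in adapted coordinates then takes the product form $w^2 = g_1(x_1)\,g_2(x_2)$, whose singularities are compound Du Val of type $cA_n$ and admit a (small) crepant resolution. One then verifies that these local crepant resolutions glue into a global one, which is possible because both the splayed condition and the product decomposition are étale-local in nature.

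The main obstacle is the key lemma controlling the transport of the splayed/admissible dichotomy under the blowup of an admissible component. The admissibility parity $|T_{C'}| - 2\,\codim C' \in \{-1,-2\}$ is delicate: both $|T_{C'}|$ and $\codim C'$ shift when passing to strict transforms or meeting the exceptional divisor $E$, so the case analysis must distinguish whether $C' \subseteq E$, whether $E \in \widetilde S$ (as determined by the parity of $|T_C|$), and how the original tangent-space decomposition restricts along $E$. A secondary subtlety in the splayed base case is checking that the product-structure argument genuinely yields crepant (and not merely birational) resolutions, and that the glueing is compatible with the global structure of the branch divisor.
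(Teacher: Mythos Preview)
Your outline has the right overall shape—an induction in which each step is a crepant blowup guaranteed by Proposition~\ref{prop:admissible-is-crepant}, followed by a separate treatment of the all-splayed situation—but two of the steps contain genuine gaps.

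\textbf{The local model for the key lemma is wrong.} You propose to ``choose \'etale coordinates in which the $D_i$ through $C$ become coordinate hyperplanes'' and then compute in charts of the blowup. This is possible only when the divisors through $C$ already form a normal crossings configuration at $p$, which by Lemma~\ref{lem:completely-transverse} is exactly the statement that every subintersection is splayed. In the interesting case—$C$ an admissible, non-splayed component—no such coordinates exist: already three concurrent lines $x=0$, $y=0$, $x+y=0$ in $\A^2$ are an arrangement that cannot be linearised to coordinate hyperplanes. This is precisely why the paper develops the second-order jet machinery of Section~\ref{sec:jets}: to control $T_q(\bigcap_i \tilde D_i)$ at points $q\in E$ without any normal-crossings hypothesis. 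That machinery, together with Proposition~\ref{prop:one-sided}, is the technical core you are missing.

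\textbf{The base case is mishandled.} When every intersection component is splayed, the union $\bigcup D_i$ is a normal crossings divisor (Lemma~\ref{lem:completely-transverse}), and the local singularity of the double cover is $w^2=x_1\cdots x_r$. For $r\ge 3$ this is not compound $A_n$, and in any case ``glue the local crepant resolutions'' is not a proof: different local choices need not patch, and the splaying $S=S_{1,p}\cup S_{2,p}$ is only locally constant along a fixed component (Proposition~\ref{prop:constant-splaying}), not globally uniform. The paper avoids all of this by continuing to blow up: each pairwise intersection $D_i\cap D_j$ is admissible of codimension~$2$, blowing it up is crepant and keeps the union normal-crossings (Corollary~\ref{cor:blowup-completely-splayed}), and after finitely many such steps the branch components are pairwise disjoint.

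A secondary point: your invariant $(c_{\max},N)$ counts admissible components, but ``admissible'' and ``splayed'' are not mutually exclusive, and the blowup can create new admissible (though splayed) components on $E$, so it is not clear your invariant decreases. The paper instead blows up a \emph{minimal non-splayed} component and tracks the number of non-splayed components, which strictly decreases by Theorem~\ref{thm:main}(1),(2). Your choice of a maximal-codimension admissible $C$ does guarantee that everything strictly inside $C$ is splayed, which is close to the paper's hypothesis, but you would still need $C$ itself to be non-splayed to invoke the paper's structural results.
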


We recall that such double covers exist if and only if the sum of the
Picard classes of the $D_i$ is a multiple of $2$.  In this case we 
construct the resolution in the following way.

\begin{alg}\cite[Proposition 5.6]{cynk-hulek}\label{alg:resolve}
  As long as there are
  non-splayed components of the intersection of the components of the branch
  locus, choose $C$ to be a minimal one and blow it up.  Replace $S$ by
  $\{\tilde D_1, \dots, \tilde D_k\}$ if the number of $D_i$ containing $C$
  is even, or by $\{\tilde D_1, \dots, \tilde D_k, E\}$ if it is odd, where
  $E$ is the exceptional divisor.
  When all of the components of the intersection are splayed, blow up all
  pairwise intersections of the components of the branch locus in arbitrary
  order.
  \end{alg}

We will show in Section \ref{sec:crepant-res} that this algorithm terminates
in a crepant resolution.
We now explain the connection of our work with \cite{cynk-hulek}.  
\begin{defn}\cite[below Lemma 5.5]{cynk-hulek}
Let $C$ be an irreducible component of $\medcap_{i \in S} D_i$, where
$S \subset \{1,\dots,n\}$, and let $S_C \subset \{1,\dots,n\}$ be the
set of $i$ with $C \subseteq D_i$.  We say that $C$ is
{\em near-pencil} if $\dim \medcap_{i \in S_C \setminus \{j\}} > \dim C$
for some $j \in S$.
\end{defn}

\begin{remark}\label{rem:splayed-1-2}
  According to this definition, $C$ is near-pencil if and only if the set of
  $D_i$ containing $C$ is splayed by $S, T$, where one of the sets $S, T$ is
  a singleton.  We also note that if the set of $D_i$ containing $C$ is splayed
  by $S,T$ where $\#S = 2$, then $C$ is still near-pencil.  Indeed, in local
  coordinates $S$ consists of the divisors $x = 0, y = 0$, while $T$ consists
  of divisors whose equations do not involve $x, y$.  Clearly we may move
  $y = 0$ from $S$ to $T$, obtaining a splaying with $\#S = 1$ and showing
  that $C$ is near-pencil.
\end{remark}

Cynk and Hulek give
the following criterion for a crepant resolution:
\begin{crit}\label{crit:cynk-hulek}
  The {\em Cynk-Hulek criterion} \cite[Proposition 5.6]{cynk-hulek}
  states that if every intersection is
admissible or near-pencil then there exists a crepant resolution of the double
cover $Y \to X$.
\end{crit}

\begin{remark} There is a slight error in the proof of
  \cite[Proposition 5.6]{cynk-hulek}: the two inequalities on lines
  13--14 of page 501 should be reversed and made strict.
  We thank Professors Cynk and Hulek for clarifying this point.
\end{remark}

In particular, Cynk and Hulek give an algorithm for constructing a resolution.
Our Algorithm \ref{alg:resolve} above follows it exactly, except that we
blow up non-splayed intersection components rather than non-near-pencil ones.
We do not need to blow up splayed intersection components.
In Section \ref{sec:primary-dec} we will explain
this striking fact by showing that the singular locus of $\medcup D_i$ admits
a primary decomposition with no components whose support is splayed.  To be
precise, we will prove the following:

\begin{thm} (Thm.~\ref{thm:pc})
  Let $D_i$ be an arrangement of divisors on $X$, and let $S$ be the singular
  subscheme of $\medcup_i D_i$.
  Let $E = \medcap_{i \in S} D_i$ be an intersection of
  some of the $D_i$ which is splayed and not of codimension $2$.
  Then the minimal primary decomposition of
  $S$ does not contain a component supported on $E$.
\end{thm}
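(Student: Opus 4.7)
The plan is to reduce the theorem to a calculation with a squarefree monomial ideal in the formal local ring at a closed point of $E$. Since being an associated prime is preserved under localization followed by completion (both faithfully flat), it suffices to show that for every closed point $p \in E$, the ideal $I_E$ of $E$ in $\hat R := \hat{\mathcal O}_{X,p}$ is not an associated prime of the Jacobian ideal
\[
J_f := (f,\; \partial f/\partial v_1,\; \ldots,\; \partial f/\partial v_n),
\]
where $f$ is a local equation of $\bigcup_i D_i$ at $p$. Divisors through $p$ that do not contain $E$ become units upon localization at the generic point of $E$, so only the divisors indexed by the set $I$ whose intersection is $E$ contribute, and we may take $f = \prod_{i \in I} f_i$ up to a unit.

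By the splayed hypothesis at $p$ and Faber's characterization of splayedness, $\hat R$ admits formal coordinates $(x_1, \ldots, x_a, y_1, \ldots, y_b)$ splitting into two disjoint blocks such that each $D_i$ with $i \in I_\ell$ is cut out by a function involving only the $\ell$-th block of variables ($\ell = 1, 2$). The arrangement hypothesis forces pairwise transversality of the divisors within each block, since smoothness of $D_{i_1} \cap \cdots \cap D_{i_m}$ of codimension $m$ forces the tangent spaces to be in general position. Iterated application of the inverse function theorem inside each block then refines the coordinates so that $f_i = x_i$ for $i \in I_1$ and $f_j = y_{j - |I_1|}$ for $j \in I_2$, with the remaining coordinates parametrizing $E$. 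Setting $k_\ell := |I_\ell|$, we reach $f = x_1 \cdots x_{k_1} y_1 \cdots y_{k_2}$, and its Jacobian ideal becomes the squarefree monomial ideal
\[
J_f = \bigl(f/v : v \in \{x_1, \ldots, x_{k_1}, y_1, \ldots, y_{k_2}\}\bigr),
\]
independent of the remaining coordinates.

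A standard calculation for squarefree monomial ideals (via Alexander duality, or by direct inspection of monomial membership) gives the irredundant primary decomposition
\[
J_f = \bigcap_{\{v, w\}} (v, w),
\]
where the intersection runs over unordered pairs of distinct variables drawn from $\{x_1, \ldots, x_{k_1}, y_1, \ldots, y_{k_2}\}$. Each prime $(v, w)$ has codimension two, so $J_f$ is unmixed with every associated prime of codimension two. Now $I_E = (x_1, \ldots, x_{k_1}, y_1, \ldots, y_{k_2})$ has codimension $k_1 + k_2 = \codim E$; splayedness forces $k_1, k_2 \ge 1$, hence $\codim E \ge 2$, and the hypothesis $\codim E \ne 2$ raises this to $\codim E \ge 3$. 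Therefore $I_E$ cannot coincide with any codimension-two prime in the decomposition, so it is not associated to $J_f$; since $p$ was arbitrary, no primary component of the singular subscheme is supported on $E$. The principal technical step is the simultaneous linearization of the $f_i$ into coordinates, which combines Faber's splayed factorization with the transversality encoded in the arrangement hypothesis; once this monomial local model is in hand, the remainder of the argument reduces to routine combinatorics of squarefree monomial ideals.
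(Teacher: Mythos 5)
There is a genuine gap at the linearization step. You assert that the arrangement hypothesis ``forces pairwise transversality of the divisors within each block,'' on the grounds that $D_{i_1}\cap\cdots\cap D_{i_m}$ is smooth \emph{of codimension $m$} --- but that codimension statement is not part of the hypothesis. An arrangement only requires all intersections to be smooth; the paper explicitly does not assume dimensional transversality (this is exactly the content of Lemma~\ref{lemma:all-splayed-easy}: codimension $r$ for all $r$-fold intersections is \emph{equivalent} to normal crossings, a strictly stronger condition than being an arrangement). Consequently the divisors within one block of the splaying need not be simultaneously linearizable to distinct coordinates: e.g.\ $\{x=0,\,y=0,\,x+y=0,\,z=0\}$ is an arrangement whose total intersection $V(x,y,z)$ has codimension $3$ and is splayed by $(\{x,y,x+y\},\{z\})$, yet the local equation $xy(x+y)z$ is not normal crossings and cannot be brought to your monomial model $x_1x_2x_3y_1$. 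Everything downstream of the monomialization then fails: $J_f$ is not in general the ideal of squarefree degree-$(k-1)$ monomials, its primary decomposition is not $\bigcap_{\{v,w\}}(v,w)$, and $J_f$ need not be unmixed of codimension $2$ (four general hyperplanes through a point of $\A^3$ already produce a codimension-$3$ embedded component at that point --- consistent with the theorem, since that intersection is \emph{not} splayed, but fatal to your claim of unmixedness).

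The way around this, which is what the paper does, is to avoid any normal form for the factors within a block and instead use Faber's splitting $(J_{FG},FG)=(F,G)\cap(J_F,F)\cap(J_G,G)$ for $F\in k[\overline{x}]$, $G\in k[\overline{y}]$, refined to a minimal primary decomposition (Theorem~\ref{thm:decomp-disjoint}): the components are either of the form $(f_i,g_j)$, which have codimension $2$, or come from $(J_F,F)$ or $(J_G,G)$ alone and hence are supported on loci cut out by variables from a single block. Since both halves of the splaying are nonempty, $E$ involves variables from both blocks, so only the codimension-$2$ components could be supported on $E$, and the hypothesis $\codim E\neq 2$ rules those out. Your reductions to a strong localization and to the divisors containing $E$, and the final codimension count, are fine; it is specifically the passage to a squarefree monomial ideal that needs to be replaced by the block-splitting of the Jacobian ideal.
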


Our theorem generalizes, not only the result of Cynk-Hulek, but also some
results such as \cite[Theorem 2.1]{cynk-szemberg} that have
significant arithmetical applications.  In fact, this was our motivation
for proving the results of this paper.
To conclude the introduction, we give an example of an arrangement that
can be given a crepant resolution by our procedure but not by that of Cynk and
Hulek.  Such an arrangement must have an intersection component that is splayed
but not admissible or near-pencil.
We believe that the existence of a crepant resolution in this case
was not previously known.

\begin{example}\label{ex:only-ours}
  Let $x_i$ for $0 \le i \le 6$ be coordinates on $\P^6$, and let
  $S = \{x_i = 0: 1 \le i \le 6\} \cup \{x_{2i-1}+x_{2i} = 0: 1 \le i \le 3$\}.
  It is clear that an intersection of a subset of $S$ fails to be near-pencil
  if and only if it is an intersection of one or more of the loci
  $x_{2i-1} = x_{2i} = 0$, and that an intersection of one or two of these loci
  is admissible while the intersection of all three is not.  On the other
  hand, the intersection of all three of these loci is splayed by the
  decomposition $(T,S \setminus T)$ where $T = \{x_1=0, x_2 = 0, x_1+x_2=0\}$.
  By adding the hyperplane $x_0 = 0$ to $S$ we obtain a set whose sum is even in
  the Picard group.
\end{example}

\section{Second-order jets and blowups}\label{sec:jets}

\subsection{Coordinate calculations}
Let $V$ be a variety and $p \in V$ a point.
Let $T^i_p V= \Hom(\O_{V,p},k[\varepsilon]/(\varepsilon^{i+1}))$ be
the set of $k$-algebra homomorphisms, which we interpret as the space
of $i$-jets of $V$ at $p$.  For a map $Y \stackrel{f}{\to} X$ taking
$q \in Y$ to $p \in X$,
there is a natural commutative diagram
$$\begin{CD}
  T^{i+1}_q Y & @>{df}>> & T^{i+1}_p X \\
  @V{\pi^i_q}VV & & @V{\pi^i_p}VV \\
  T^{i}_q Y & @>{df}>> & T^{i}_p X
\end{CD}$$
with row maps induced by composing a map $w : \O_{Y,q} \to
k[\eps]/(\eps^j)$ with the map $\O_{X,p} \to \O_{Y,q}$ and
column maps induced by composing with the natural map
$k[\eps]/(\eps^{i+1}) \to k[\eps]/(\eps^i)$.

\begin{lemma}\label{lemma:phs} Let $R$ be a regular local $k$-algebra with
  residue field $k$ and let $n$ be a positive integer.  Define
  $T^nR = \Hom(R,k[\eps]/(\eps^{n+1}))$ to be the set of $k$-algebra
  homomorphisms.  There is a natural exact sequence of pointed sets
  $$ 0 \to T^1R \stackrel{\varphi^n}{\to} T^nR \stackrel{\pi^n}{\to}
  T^{n-1}R \to 0$$ giving $T^nR$ the structure of a $T^1R$-torsor with
  base $T^{n-1}R$.
\end{lemma}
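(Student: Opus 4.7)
My plan is to identify $\pi^n$ and $\varphi^n$ with composition with natural ring maps between the Artinian rings $A_m := k[\eps]/(\eps^{m+1})$. The map $\pi^n$ is composition with the surjection $A_n \twoheadrightarrow A_{n-1}$, while $\varphi^n$ is composition with the $k$-algebra homomorphism $A_1 \to A_n$ sending $\eps \mapsto \eps^n$; this is well defined for $n \geq 1$ because $(\eps^n)^2 = 0$ in $A_n$. I take the basepoint of each $T^mR$ to be the constant $m$-jet $f_0: R \twoheadrightarrow k \hookrightarrow A_m$; then $\pi^n \circ \varphi^n$ is the constant map at $f_0$, and the injectivity of $\varphi^n$ is immediate.

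Next I would construct the torsor action of $T^1R = \operatorname{Der}_k(R,k)$ on $T^nR$ by the formula $(g \cdot d)(r) = g(r) + \eps^n d(r)$, where $d$ is viewed as a $k$-derivation with $R$ acting on $k$ via $f_0$. The Leibniz rule for $d$, combined with $\eps^{2n} = 0$ and the identity $\eps^n g(r) = \eps^n f_0(r)$ (which holds because $g(r) - f_0(r) \in (\eps)$), shows that $g \cdot d$ is still a $k$-algebra homomorphism, and clearly $\pi^n(g \cdot d) = \pi^n(g)$. Conversely, if $\pi^n(g_1) = \pi^n(g_2)$, then $g_1(r) - g_2(r) \in \eps^n A_n = k \eps^n$, so one may write $g_1(r) - g_2(r) = \eps^n d(r)$ for a unique $k$-linear $d : R \to k$, and expanding $g_i(rs) = g_i(r) g_i(s)$ shows $d$ is a derivation. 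This establishes freeness and transitivity of the action on each nonempty fiber, and identifies the image of $\varphi^n$ with the fiber of $\pi^n$ over the basepoint of $T^{n-1}R$, which is the content of exactness at $T^nR$.

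The only remaining point, and the only step that uses regularity, is surjectivity of $\pi^n$. Given $f : R \to A_{n-1}$, the maximal ideal $(\eps) \subset A_{n-1}$ pulls back to $\m$ (since $R$ is local), so $f(\m^n) = 0$ and $f$ factors through $R/\m^n \cong \hat R/\hat\m^n$. By Cohen's structure theorem, regularity of $R$ together with the hypothesis that the residue field equals $k$ gives $\hat R \cong k[[x_1,\ldots,x_d]]$; but a $k$-algebra homomorphism from this power series ring to an Artinian ring with nilpotent maximal ideal is determined freely by the images of the $x_i$ in that maximal ideal. Hence to produce the lift it suffices to choose, for each $i$, any element of $(\eps) \subset A_n$ projecting to the image of $x_i$ in $(\eps) \subset A_{n-1}$, and then restrict the resulting map to $R \subset \hat R$.

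The hard part is this final surjectivity step: the torsor calculation works for any local $k$-algebra with residue field $k$, but surjectivity genuinely requires the formal smoothness encoded by regularity against the square-zero extension $A_n \twoheadrightarrow A_{n-1}$. Everything else reduces to careful bookkeeping with the Leibniz rule and the nilpotency of $\eps$.
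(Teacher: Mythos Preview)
Your proof is correct and follows essentially the same route as the paper: both rest on the square-zero extension $0 \to \eps^n A_n \to A_n \to A_{n-1} \to 0$ and invoke regularity for the surjectivity of $\pi^n$. You supply considerably more detail than the paper's three-line sketch---explicitly constructing the torsor action via derivations and establishing surjectivity through Cohen's structure theorem rather than an implicit appeal to formal smoothness---but the underlying argument is the same.
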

\begin{proof}
  There is a natural exact sequence
  $$ 0 \to \eps^nk[\eps]/(\eps^{n+1}) \to k[\eps]/(\eps^{n+1}) \to
  k[\eps]/(\eps^{n}) \to 0.$$
  We identify $\eps^nk[\eps]/(\eps^{n+1})$ 
  with $\eps k[\eps]/(\eps^2)$ by multiplication by $\eps^{n-1}$.
  We apply the left
  exact functor $\Hom(R,-)$ to the exact sequence.  The resulting
  sequence is exact since $R$ is regular.
\end{proof}

\begin{defn}\label{def:p-sub-u}
  Let $u \in T^n R$.  Then the set $T_u^n(R) = (\pi^n)^{-1}(\pi^n(u))$ is a
  principal homogeneous space for the vector space $T^1 R$.  Choosing
  $u$ to be the origin, we trivialize the principal homogeneous space,
  making $T_u^n(R)$ into a vector space with an isomorphism
  $\tau_u: T^1 R \stackrel{\cong}{\to} T_u^n(R)$.

  If $R = \O_{V,p}$ is
  the local ring of a variety $V$ at a point $p$, then we write
  $T^n_{p,u}(V)$ for $T_u^n(R)$.
  When $p$ is a smooth point of $V$, 
  we denote the map $\pi^n$ of Lemma \ref{lemma:phs} for
  the local ring $\O_{V,p}$ by $\pi^n_p$.
\end{defn}

As in Section \ref{sec:intro}, let $X$ be a smooth variety.  Let $C
\subset X$ be a smooth subvariety and let $f:\widetilde{X} \to X$ be
the blowup of $X$ along $C$ with exceptional divisor $E$.  Let $q \in E$
and let $p = f(q)$.  Our goal is to describe an injective map from
a jet space at $q$ to a jet space at $p$.  Just as the choice of a
point $q$ in $E$ determines a tangent direction at $p$, first-order
jets at $q$ give information about second-order jets at $p$ that are
in the direction of $q$ at first order.  We will apply this to obtain
a bound on the dimensions of tangent spaces of intersections of divisors
and of their proper transforms.

Pick \'etale local coordinates $x_0,\ldots,x_n,t_1,\ldots,t_m$ for $p$
in $X$ in such a way that $C = V(x_0,x_1,\ldots,x_n)$.
There exist \'etale covers of
open neighbourhoods of $p,q$ in which we can write $f:\widetilde{X} \to X$ as
$$(x_0,x_1,\ldots,x_n,t_1,\ldots,t_m) \mapsto (x_0,x_1/x_0,\ldots,x_n/x_0,t_1,\ldots,t_m)$$ and the points
are described by $$p = V(x_0,\ldots,x_n,t_1,\ldots,t_m), \quad \quad q = V(x_0,x_1/x_0 - \alpha_1,\ldots,x_n/x_0 - \alpha_n,t_1,\ldots,t_m)$$ for fixed $\alpha_i$ in $k$.  Note that
$q \in E = V(x_0)$.

Let $w$ be an arbitrary element of $T^2_q \widetilde{X}$ defined by
\begin{equation}\label{eqn:w}
  \begin{aligned}
    w(x_0) &= \beta_0 \eps + \gamma_0 \eps^2, \\
    w(x_1/x_0 - \alpha_1) &= \beta_1 \eps + \gamma_1 \eps^2, \dots, \\
    w(x_n/x_0 - \alpha_n) &= \beta_n \eps + \gamma_n \eps^2, \\
    w(t_1) &= \eta_1 \eps + \theta_1 \eps^2, \dots, \\
    w(t_m) &= \eta_m \eps + \theta_m \eps^2.
  \end{aligned}
\end{equation}
Then $df(w)(x_i) = w(x_0(x_i/x_0 - \alpha_i + \alpha_i))$ for $i > 0$,
while $df(w)(x_0) = w(x_0)$ and $df(w)(t_j) = w(t_j)$, so
$df(w) \in T^2_p X$ is described by
\begin{equation}\label{eqn:f-of-w}
  \begin{aligned}
    df(w)(x_0) &= \beta_0 \eps + \gamma_0 \eps^2,\\
    df(w)(x_1) &= (\alpha_1 + \beta_1 \eps + \gamma_1 \eps^2)(\beta_0 \eps + \gamma_0 \eps^2), \dots,\\
    df(w)(x_n) &= (\alpha_n + \beta_n \eps + \gamma_n \eps^2)(\beta_0 \eps + \gamma_0 \eps^2), \\
    df(w)(t_1) &= \eta_1 \eps + \theta_1 \eps^2, \dots, \\
  df(w)(t_m) &= \eta_m \eps + \theta_m \eps^2.
  \end{aligned}
\end{equation}
Note that the point $q  = V(x_0,x_1/x_0 - \alpha_1,\ldots,x_n/x_0 - \alpha_n,
t_1,\ldots,t_m)$ together with a fixed $\beta_0 \ne 0 \in k$ determines a
nonzero normal vector $n_q \in N_p X/C$ given by
\begin{equation}\label{eqn:vq}
  \begin{aligned}
    n_q(x_0) & = & \beta_0\varepsilon\\
    n_q(x_1) & = & (\alpha_1 +\beta_1 \eps)(  \beta_0\varepsilon) =\alpha_1\beta_0\varepsilon \\
    \vdots \\
    n_q(x_n) & = & (\alpha_n +\beta_n \eps)( \beta_0\varepsilon) =\alpha_n\beta_0\varepsilon       \\          
  \end{aligned}
\end{equation}

\begin{defn}\label{def:vprime-q}
  Let $v_q$ be the lift of $n_q$ to $T_p^1(X)$ that satisfies $v_q(t_i) = 0$
  for $1 \le i \le m$.
\end{defn}

\begin{lemma}\label{lemma:t-surj}
  The map $T_q^1(\tilde X) \to T_p^1(C) + \langle v_q \rangle$ is surjective.
\end{lemma}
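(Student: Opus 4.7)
The plan is to compute the image of $df|_{T_q^1(\tilde X)}$ directly in the coordinates introduced just above the lemma. A first-order jet $w \in T_q^1(\tilde X)$ is obtained from (\ref{eqn:w}) by setting all the second-order coefficients $\gamma_i, \theta_j$ to zero; such a $w$ is freely parametrized by $(\beta_0, \ldots, \beta_n, \eta_1, \ldots, \eta_m) \in k^{n+m+1}$. Substituting into (\ref{eqn:f-of-w}) and retaining only the $\eps$-term, I would read off
\[
df(w)(x_0) = \beta_0\eps, \quad df(w)(x_i) = \alpha_i\beta_0\eps \text{ for } 1 \le i \le n, \quad df(w)(t_j) = \eta_j\eps.
\]
The essential observation here is that the coefficients $\beta_i$ for $i\geq 1$ drop out of the first-order information, since each $\beta_i$ appears only in the product $\beta_i \beta_0 \eps^2$ which is second-order.

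Next I would unpack $T_p^1(C) + \langle v_q\rangle$ concretely in the same coordinates. Since $C = V(x_0,\ldots,x_n)$, an element of $T_p^1(C)$ is any jet killing every $x_i$ with arbitrary values on the $t_j$, while by (\ref{eqn:vq}) together with Definition \ref{def:vprime-q}, $v_q$ sends $x_0 \mapsto \beta_0\eps$, $x_i \mapsto \alpha_i\beta_0\eps$ for $i \ge 1$, and $t_j \mapsto 0$. Hence $T_p^1(C) + \langle v_q\rangle$ consists exactly of jets of the form $(\lambda\beta_0\eps, \lambda\alpha_1\beta_0\eps, \ldots, \lambda\alpha_n\beta_0\eps, \eta_1\eps, \ldots, \eta_m\eps)$ for arbitrary $\lambda \in k$ and $\eta_j \in k$.

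Comparing the two descriptions, the image of $df$ on first-order jets at $q$ coincides exactly with $T_p^1(C) + \langle v_q\rangle$: the free parameter $\beta_0$ on the source plays the role of $\lambda$ (after a fixed rescaling by the nonzero $\beta_0$ chosen in the definition of $v_q$), and the $\eta_j$ match on both sides. There is no real obstacle here---the content is a direct unwinding of the coordinate description---and the one conceptual point worth highlighting is that the blowup collapses the $n$-dimensional freedom in the $\beta_i$ ($i \ge 1$) at first order, leaving only the single line $\langle v_q\rangle$ transverse to $T_p^1(C)$.
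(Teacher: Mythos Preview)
Your proof is correct and follows essentially the same approach as the paper: both arguments are direct coordinate computations using the explicit formulas (\ref{eqn:w}), (\ref{eqn:f-of-w}), and (\ref{eqn:vq}), with the paper splitting the verification into two pieces (first $T_p^1(C)\subseteq\im df$, then one extra vector landing in $(v_q+T_p^1(C))\setminus T_p^1(C)$) while you compute the full image in one pass and match it against the target subspace.
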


\begin{proof}
  The subspace $T_p^1(C) + \langle v_q \rangle$ does not depend on the
  choice of $\beta_0$, because multiplying $\beta_0$ by $c \ne 0$
  replaces $v_q$ by $cv_q$.
  The image of the map contains $T_p^1(C)$, because the
  map (\ref{eqn:f-of-w}) takes the $t_i$ to the space spanned by the
  $\eta_i \eps$.  Let $w$ be a vector such that $df(w)(x_0) = \beta_0 \eps$
  and $df(w)(x_1), \dots, df(w)(x_n), df(w)(t_1), \dots, df(w)(t_m)$ are
  multiples of $\eps^2$.  Then
  $df(w) \in (v_q + T_p^1(C)) \setminus T_p^1(C)$.  The result follows.
\end{proof}

We choose $u_q \in T^2_q\widetilde{X}$ such that
$df(u_q) = v_q$.
Lemma~\ref{lemma:phs} makes
$T^2_{p,v_q}(X)$ into a principal homogeneous space over $T^1_p(X)$; choosing
$df(u_q)$ as origin, we make $T^2_{p,v_q}(X)$ into a vector space with an
isomorphism $\tau_{v_q}: T^1_p(X) \to T^2_{p,v_q}(X)$. We similarly define $T^2_{q,u_q}$.
This amounts to an identification between the first-order
tangent space and the space of second-order tangent vectors that restrict
to a given first-order tangent vector.  Such an identification depends
crucially on the smoothness of $X$ at $p$, or, in the algebraic formulation
of Lemma \ref{lemma:phs}, on the regularity of~$R$.

Also, $(df)^{-1}(v_q)$ is a principal
homogeneous space for $T^1_qF$.
Choosing $\pi^1_q(u_q)$ as origin, we obtain an isomorphism
$\tau_{u_q}: T^1_q F \to (df)^{-1}(v_q)$.
We choose $\lambda : T^1_q \widetilde{X} \to T^2_q\widetilde{X}$ to be the map
such that
$\lambda(w)(x_0) = \beta_0\eps,\lambda(w)(x_i/x_0-\alpha_i) = \beta_i\eps,
\lambda(w)(t_i) = \eta_i\eps$, splitting the natural projection
$T^2_q\widetilde{X} \to  T^1_q \widetilde{X}$.  
We summarize the definitions above in the diagram below.
\begin{equation}\label{diag:rho}
  \begin{tikzcd}
  T^1_qF \arrow[r,"\tau_{u_q}"] \arrow[rd,"\rho"] &  (df)^{-1}(v_q)  \arrow[r,"{\lambda}"]  &  T^2_{q,u_q} \widetilde{X}   \arrow[r] \arrow[d] &   T^2_q\widetilde{X} \arrow[d,"df"]\\
 &  T^1_p(X) \arrow[r,"\tau_{v_q}"] &  T^2_{p,v_q} X   \arrow[r] &  T^2_p X 
\end{tikzcd}\end{equation}
The map $\rho$ is defined so as to make the diagram commute.
That is, we define
$\rho := \tau_{v_q}^{-1}\circ df\circ \lambda \circ \tau_{u_q}: T^1_qF \to T^1_pX$.

\begin{prop}\label{prop:T1toT2Injective}
  The subspaces $(df)(T^1_q\widetilde{X})$ and $\rho(T^1_qF)$ are complementary
  in $T^1_p X$; hence $\rho$ induces an isomorphism
  $T^1_qF \to \cok (df :T^1_q\widetilde{X} \to T^1_pX)$.
  \end{prop}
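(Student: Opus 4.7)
The plan is to compute both subspaces explicitly in the \'etale local coordinates $x_0, \dots, x_n, t_1, \dots, t_m$ fixed before equation~(\ref{eqn:w}), and then verify by a dimension count and an intersection check that they are complementary in $T^1_p X$. The first subspace is essentially identified by Lemma~\ref{lemma:t-surj}: $(df)(T^1_q \widetilde{X}) = T^1_p(C) + \langle v_q \rangle$, which in coordinates is the $(m+1)$-dimensional subspace of tangent vectors $u$ with $u(x_0) = a\eps$, $u(x_i) = a\alpha_i\eps$ for $1 \le i \le n$, and $u(t_j) = b_j\eps$.

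For $\rho(T^1_q F)$, I would trace an arbitrary $w \in T^1_q F$ --- characterized by $w(x_0) = 0$, $w(t_j) = 0$, and $w(x_i/x_0 - \alpha_i) = \beta_i\eps$ --- through the composition $\rho = \tau_{v_q}^{-1} \circ df \circ \lambda \circ \tau_{u_q}$. Taking the base point $\pi^1_q(u_q)$ to be the first-order jet with $\beta_0 = 1$ and all other $\eps$-coefficients zero guarantees $df(\pi^1_q(u_q)) = v_q$ (normalizing $\beta_0 = 1$ in~(\ref{eqn:vq})). Applying $\tau_{u_q}$ and then $\lambda$ produces the second-order jet at $q$ whose $\eps$-coefficients are $(1, \beta_1, \dots, \beta_n, 0, \dots, 0)$ in $(x_0, x_i/x_0 - \alpha_i, t_j)$-coordinates and whose $\eps^2$-coefficients vanish. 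Applying $df$ via equation~(\ref{eqn:f-of-w}) gives a second-order jet at $p$ which, measured against the origin $df(u_q) = v_q$, differs only by $\beta_i\eps^2$ in the $x_i$-slot. Unwinding $\tau_{v_q}^{-1}$ using the identification of $\eps^2 k[\eps]/(\eps^3)$ with $\eps k[\eps]/(\eps^2)$ via multiplication by $\eps$ (the key identification in the proof of Lemma~\ref{lemma:phs}), I obtain $\rho(w) \in T^1_p X$ with $\rho(w)(x_0) = 0$, $\rho(w)(x_i) = \beta_i\eps$, and $\rho(w)(t_j) = 0$. Thus $\rho(T^1_q F)$ is the $n$-dimensional subspace spanned by the $x_i$-coordinate directions, and injectivity of $\rho$ is immediate.

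The proposition then reduces to linear algebra. A vector in the intersection of the two subspaces must have $a = 0$ (from lying in $\rho(T^1_q F)$), hence also $b_j = 0$, and then $c_i = a\alpha_i = 0$, so the intersection is trivial. Since $\dim(df)(T^1_q \widetilde{X}) + \dim \rho(T^1_q F) = (m+1) + n = \dim T^1_p X$, the two subspaces are complementary, and $\rho$ induces the claimed isomorphism onto $\cok(df)$. The only delicate point --- which I expect to be the main obstacle to writing out a clean proof --- is correctly unwinding the identifications of Definition~\ref{def:p-sub-u} and Lemma~\ref{lemma:phs}, in particular the fact that the trivialization $\tau_{v_q}$ amounts to multiplication by $\eps$ on the coefficient level, so that the formula for $\rho(w)$ comes out as above; once this is set up carefully, everything else is bookkeeping in the chosen coordinate system.
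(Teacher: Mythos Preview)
Your proposal is correct and follows essentially the same approach as the paper: an explicit coordinate computation of $\rho$ yielding (up to your normalization $\beta_0 = 1$) exactly the formula~(\ref{eqn:localMap}), from which complementarity with $(df)(T^1_q\widetilde{X}) = T^1_p(C) + \langle v_q\rangle$ is immediate linear algebra. The paper's proof is terser---it records the formula and leaves the dimension/intersection check implicit---but the underlying computation is identical to yours.
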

\begin{proof}
This follows from an explicit computation of the map $\rho$.
The nonzero tangent vector $u_q$ determines $v_q$, and so a scalar
$\beta_0 \neq 0$.
Let $w \in T^1_{q,v_q}(\widetilde X)$.
In equation~(\ref{eqn:w}), we set $w(t_j) = \eps^2 = 0$ and observe that
$w(x_0) = \beta_0$ and $
w(x_i/x_0 - \alpha_i) = \beta_i \eps$ for $1 \le i \le n$.
Note that $\eta_1 = \cdots = \eta_m = 0$
for all vectors in $T^1_{q,v_q}(\widetilde{X})$ since
these are directions contained in $F$.

Similarly, equation~(\ref{eqn:f-of-w})
gives $df(w)(x_0) = \beta_0 \eps$,
$df(w)(x_i) = (\alpha_i + \beta_i \eps)(\beta_0 \eps) = \alpha_i \beta_0 \eps + \beta_i \beta_0 \eps^2)$ for $1 \le i \le n$, and
again $df(w)(t_1) = \dots = df(w)(t_m) = 0$. 
Hence the linear
map $T^1_qF \to T^1_p X$
is given in coordinates by
\begin{equation}\label{eqn:localMap}
  \eps\begin{pmatrix} \beta_1 \\
  \vdots \\ \beta_n \end{pmatrix}  \mapsto \eps^2\begin{pmatrix}0 \\ \beta_1\beta_0 \\
  \vdots \\ \beta_n\beta_0 \\ 0 \\ \vdots \\ 0 \end{pmatrix}.
\end{equation}
\end{proof}

\subsection{Maps on tangent spaces}
We will apply the results of the previous section to prove a
proposition that will be a key step
toward Theorem~\ref{thm:resolve}, our main result.  We will establish the
following:
\begin{prop}\label{prop:blowupIsArrangement} Let $\{D_i\}_{i \in S}$ be an arrangement in $X$,
  and let $\widetilde{X}$ be the blowup of $X$ along a component $C$ of
  the intersection of some subset of the $S$, with exceptional divisor $E$.
  Then $\{ \widetilde{D}_i\}_{i \in S} \cup \{E\}$ is an arrangement in $\widetilde{X}$.
\end{prop}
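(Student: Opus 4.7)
The plan is to verify the arrangement property of $\{\widetilde D_i\}_{i \in S} \cup \{E\}$ pointwise on $\widetilde X$. Away from $E$ the blowup is a local isomorphism, so the property is inherited from $\{D_i\}_{i \in S}$; I thus fix a point $q \in E$, set $p = f(q) \in C$, and work in the \'etale local coordinates of Section~\ref{sec:jets}: $x_0,\dots,x_n,t_1,\dots,t_m$ at $p$ with $C = V(x_0,\dots,x_n)$, and $u_0 = x_0$, $u_i = x_i/x_0 - \alpha_i$, $t_1,\dots,t_m$ at $q$, so that $E$ is cut out by $u_0$. For each $D_i \ni p$ with local equation $f_i$, the strict transform $\widetilde D_i$ is cut out by $g_i := f_i/u_0^{e_i}$, where $e_i = 1$ if $C \subseteq D_i$ and $e_i = 0$ otherwise.

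Fix $T \subseteq S$ and let $D_T = \bigcap_{i \in T} D_i$. The crux will be to show that $\{dg_i|_q\}_{i \in T}$ is linearly independent as a subset of $(T^1_q\widetilde X)^*$, since this simultaneously identifies $\bigcap_{i \in T}\widetilde D_i$ scheme-theoretically with the strict transform $\widetilde{D_T}$ near $q$ and exhibits $\widetilde{D_T}$ as cut out transversally. Here Proposition~\ref{prop:T1toT2Injective} enters: for $i \in T$ with $e_i = 1$ and $q \in \widetilde D_i$, the functional $df_i|_p$ vanishes on $\im(df) = T^1_p C + \langle v_q\rangle$, hence descends to a functional on $T^1_p X / \im(df)$; under the isomorphism $\rho^{-1}: T^1_p X/\im(df) \xrightarrow{\sim} T^1_q F$ supplied by that proposition, this descended functional coincides up to a universal scalar with $(dg_i|_q)|_{T^1_q F}$. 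Linear independence of $\{df_i|_p\}_{i \in T}$, which follows from smoothness of $D_T$ in the original arrangement, thereby transfers to linear independence of the restrictions, and hence of the $\{dg_i|_q\}$ themselves; the indices with $e_i = 0$, for which $dg_i|_q = df_i|_q$ takes values in the span of $du_0$ and the $dt_k$ alone, are incorporated by a routine splitting argument.

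Once the $\{dg_i|_q\}_{i \in T}$ are known to be linearly independent, smoothness of $\widetilde{D_T}$ and of $\widetilde{D_T} \cap E$ follows from standard blowup theory. Locally at $p$ the smooth intersection $\bigcap_{i \in S_C} D_i$ coincides with its irreducible component $C$ (where $S_C = \{i : C \subseteq D_i\}$), so $D_T \cap C = \bigcap_{i \in T \cup S_C} D_i$ is smooth by the arrangement hypothesis on $X$. Consequently $\widetilde{D_T}$ is the blowup of the smooth variety $D_T$ along the smooth subvariety $D_T \cap C$, hence smooth, and $\widetilde{D_T} \cap E$ is the exceptional divisor of this blowup, a projective bundle over $D_T \cap C$, also smooth. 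The main obstacle is the linear-independence argument of the middle paragraph, where Proposition~\ref{prop:T1toT2Injective} must be applied carefully to bridge first-order data at $p$ and at $q$.
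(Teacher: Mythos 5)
There is a genuine gap, and in fact your strategy cannot succeed because the statement is false as literally written: an additional hypothesis on $C$ is needed (in the paper's actual argument, that $C$ is a \emph{minimal non-splayed} intersection component, as in Theorem~\ref{thm:main}). Remark~\ref{remark:why-so-hard} contains the counterexample. Take the arrangement $\{x=0,\,y=0,\,z=0,\,y+z=0\}$ in $\P^3$ and blow up $C=\{x=y=0\}$, a component of the intersection of the first two divisors. In the chart with coordinates $(x,y',z)$, $y=xy'$, the strict transforms of $z=0$ and $y+z=0$ are $g_1=z$ and $g_2=xy'+z$, and their intersection $\{z=xy'=0\}$ is reducible and singular at the point $q$ over the origin. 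Your pivotal claim --- linear independence of $\{dg_i|_q\}_{i\in T}$ --- fails exactly there: $dg_1|_q=dg_2|_q=dz$. The mechanism is that for a divisor with $e_i=0$ one has $dg_i|_q=f^*(df_i|_p)$, and the pullback $f^*$ on cotangent spaces has a nontrivial kernel (the annihilator of $\im(df)=T_pC+\langle v_q\rangle$), so independence at $p$ is not preserved; the ``routine splitting argument'' you defer for the $e_i=0$ indices is precisely the step that cannot be carried out without a hypothesis controlling how those divisors meet $C$.

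There is a second, independent error: you assert that linear independence of $\{df_i|_p\}_{i\in T}$ ``follows from smoothness of $D_T$.'' It does not --- the paper's arrangements are explicitly not assumed dimensionally transverse (see the remarks after Definition~\ref{def:arrangement-2}); already $x=0$, $y=0$, $x+y=0$ have smooth intersection with dependent differentials. So the goal of exhibiting $\bigcap_{i\in T}\widetilde D_i$ as a transverse complete intersection is too strong even where the proposition does hold. The paper instead avoids differentials entirely: it bounds $\dim T_q\bigl(\bigcap_{i\in T}\widetilde D_i\bigr)$ from above via the filtration $T_qF\subseteq T_qE\subseteq T_q\widetilde X$ (Propositions~\ref{prop:step2} and~\ref{prop:step3} together with Corollary~\ref{cor:fibral-dirs}), computes $\dim\bigcap_{i\in T}\widetilde D_i$ from below using the splayedness supplied by Proposition~\ref{prop:one-sided} (Proposition~\ref{prop:same-dim}), and concludes smoothness from the equality of the two (Corollary~\ref{cor:blowup-smooth}), treating the intersections with $E$ afterwards. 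To repair your write-up you would need to import the minimality/splayedness hypothesis and replace the linear-independence claim by a tangent-space dimension count of this kind.
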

We will do so by proving a sequence of lemmas and propositions.
Let $I \subseteq S$ be an
arbitrary subset of $S$ and let $C'$ be a component of $\medcap_{i \in I} D_i$.
\begin{lemma}\label{lemma:CdoesnotcontainCprime}
  If $C \supseteq C'$ then $E \cap \medcap_{i \in I} \widetilde{D}_i = \emptyset$.
  \end{lemma}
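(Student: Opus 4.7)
The plan is to analyze $E \cap \bigcap_{i \in I} \widetilde{D}_i$ fiberwise over $C$, working locally at points of $C$ in the \'etale coordinates from Section~\ref{sec:jets}. Decomposing $I = (I \cap T_C) \sqcup (I \setminus T_C)$, I would observe that for $i \in T_C$ the proper transform $\widetilde{D}_i$ meets $E = \P(N_{C/X})$ in the hyperplane subbundle $\P(T_p D_i / T_p C)$ above each $p \in C$, whereas for $i \notin T_C$ the proper transform coincides with the total transform $f^{-1}(D_i)$ and hence intersects $E$ in the full $\P$-fiber over $D_i \cap C$. Consequently, the fiber of $E \cap \bigcap_{i \in I} \widetilde{D}_i$ over $p \in C$ is the projectivization of $\bigcap_{i \in I \cap T_C} T_p D_i / T_p C \subseteq N_{C/X,p}$ when $p \in \bigcap_{i \in I \setminus T_C} D_i$, and is empty otherwise.

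It therefore suffices to show that $\bigcap_{i \in I \cap T_C} T_p D_i = T_p C$ for every $p \in C \cap \bigcap_{i \in I \setminus T_C} D_i$. This is where the hypothesis $C \supseteq C'$ enters: since $C \subseteq D_j$ for each $j \in T_C$ and $C' \subseteq C$, we get $C' \subseteq D_j$ for every $j \in T_C$, so $C'$ remains a component of the enlarged intersection $\bigcap_{i \in I \cup T_C} D_i$. Smoothness of this intersection yields $T_p C' = \bigcap_{i \in I \cup T_C} T_p D_i \subseteq T_p C = \bigcap_{i \in T_C} T_p D_i$ at each $p \in C'$, and a linear-algebra argument using these identities in $N_{C/X,p}$ should force the hyperplanes coming from $i \in I \cap T_C$ to span the conormal space of $C$ at $p$, making the projectivized fiber empty.

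The main obstacle is precisely this final linear-algebra step. When $I \cap T_C$ is a proper subset of $T_C$, the hyperplanes $\{T_p D_i / T_p C\}_{i \in I \cap T_C}$ do not \emph{a priori} intersect in $\{0\}$, and one has to leverage the arrangement property (smoothness of $\bigcap_{i \in I \cup T_C} D_i$) together with the inclusion $C' \subseteq C$ to rule out any residual common direction in $N_{C/X,p}$. Extending the argument from points of $C'$ to all of $C \cap \bigcap_{i \in I \setminus T_C} D_i$ requires the same kind of fiberwise linear-algebra reasoning at potentially different base points, which is where the arrangement smoothness of every subset must be used in its full strength.
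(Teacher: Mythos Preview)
Your computation of the fiber of $E\cap\bigcap_{i\in I}\widetilde D_i$ over $p$ as $\P\bigl(\bigcap_{i\in I\cap T_C}T_pD_i\,/\,T_pC\bigr)$ is correct, and you have correctly located the crux: one must show $\bigcap_{i\in I\cap T_C}T_pD_i=T_pC$. This is a genuine gap, not a technicality that the arrangement hypothesis will close. Take $X=\A^4$ with coordinates $x,y,w,z$, divisors $D_1=\{x=0\}$, $D_2=\{y=0\}$, $D_3=\{w=0\}$, $D_4=\{y+w=0\}$ (one checks easily that all intersections are smooth), and $C=\{x=y=0\}$, so $T_C=\{1,2\}$. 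With $I=\{1,3,4\}$ one finds $C'=D_1\cap D_3\cap D_4=\{x=y=w=0\}\subseteq C$, yet $I\cap T_C=\{1\}$ and $\bigcap_{i\in\{1\}}T_pD_i=T_pD_1$ is strictly larger than $T_pC$. In the blowup chart $x=yv$ one computes directly that $E\cap\widetilde D_1\cap\widetilde D_3\cap\widetilde D_4=\{y=v=w=0\}$ is a line, not empty. So the reduction you propose cannot be completed, and no amount of ``linear-algebra argument using these identities'' will rescue it.

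The paper's proof bypasses your decomposition entirely: it asserts that the fiber of $E\cap\bigcap_{i\in I}\widetilde D_i$ over $p$ is the projectivization of the image of the composite $T_pC'\to T_pX\to N_{p,X/C}$, and then observes that $C'\subseteq C$ forces $T_pC'\subseteq T_pC$, making that image zero. This is a one-line argument. But what that identification really computes is the fiber of $\widetilde{C'}\cap E$ (indeed empty when $C'\subseteq C$, since then $\widetilde{C'}=\emptyset$), not the fiber of $\bigcap_{i\in I}\widetilde D_i\cap E$; the two coincide only when $\bigcap_{i\in I\cap T_C}T_pD_i=T_pC'+T_pC$, exactly the equality your analysis shows can fail. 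In short, your more careful fiberwise computation has surfaced a real issue with the lemma as literally stated: the intersection $\bigcap_{i\in I}\widetilde D_i$ can acquire components inside $E$ lying over $C'$ that are not accounted for by the strict transform of $C'$.
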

  \begin{proof}
    We have a sequence of natural maps
    $$ T_pC' \to T_pX \to N_{p,X/C}.$$
    The projectivization of the image of the composite map
    is naturally identified with the inclusion $E \cap \medcap_{i \in I} \widetilde{D}_i \subset \P(N_{X/C})$.
    Since $C' \subseteq C$, the map $T_pC' \to T_pX$ factors through $T_pC$
    and so the image is zero.
  \end{proof}
Our next task is to understand
$T_q \cap \widetilde{D}_i$ by means of the filtration
$$T_qF \subseteq T_qE \subset T_q\widetilde{X}.$$
We will do this in three steps.  In the first step,
we use the results of the last section to prove the following statement:
\begin{prop} With notation as above,
  $$ \dim \left( T_qF \cap \medcap_{i \in I} T_q \widetilde{D}_i \right)
  =  \dim T_p C' - \dim T_p(C \cap C') -1.$$
\end{prop}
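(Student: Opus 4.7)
The plan is to split $I$ into $J := I \cap T_C$ and $I \setminus T_C$, reducing the intersection to a computation in the projective fiber $F = \P(N_{X/C, p})$. For $i \in I \setminus T_C$, we have $C \not\subseteq D_i$, so the proper transform $\widetilde{D}_i$ coincides with the scheme-theoretic preimage $f^{-1}(D_i)$, which contains $F = f^{-1}(p)$ because $p \in C' \subseteq D_i$. Hence $T_q F \subseteq T_q \widetilde{D}_i$ for such $i$, so those indices impose no constraint, and the intersection reduces to one over $i \in J$.

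For $i \in J$, since $C \subseteq D_i$ the proper transform meets the fiber $F$ in the projective hyperplane $\P(T_p D_i/T_p C) \subseteq F$, whose tangent space at $q$ equals $T_p D_i/(T_p C + \langle n_q \rangle)$, where $n_q \in T_p X$ lifts the normal direction at $p$ corresponding to $q$. Because $q \in \widetilde{D}_i$, we have $n_q \in T_p D_i$ for each $i \in J$, so each $T_p D_i$ contains $T_p C + \langle n_q \rangle$ and the intersection of these quotient spaces equals the quotient of their intersection:
\begin{equation*}
  T_q F \cap \medcap_{i \in I} T_q \widetilde{D}_i \;=\; \Bigl(\medcap_{i \in J} T_p D_i\Bigr)\Big/\bigl(T_p C + \langle n_q \rangle\bigr),
\end{equation*}
of dimension $\dim \medcap_{i \in J} T_p D_i - \dim T_p C - 1$.

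The remaining step is to identify $\medcap_{i \in J} T_p D_i$ with $T_p C + T_p C'$ inside $T_p X$. The inclusion $\supseteq$ is immediate, since each $D_i$ for $i \in J = I \cap T_C$ contains both $C$ and $C'$, so its tangent space contains both $T_p C$ and $T_p C'$. The reverse inclusion amounts to showing that $C$ and $C'$ meet transversally at $p$ inside the smooth arrangement member $\medcap_{i \in J} D_i$; I expect this to be the main obstacle. Once granted, combining with $T_p C \cap T_p C' = T_p(C \cap C')$ (from smoothness of $C \cap C'$, itself an arrangement intersection) and the standard formula $\dim(A+B) + \dim(A\cap B) = \dim A + \dim B$ yields $\dim \medcap_{i \in J} T_p D_i = \dim T_p C + \dim T_p C' - \dim T_p(C \cap C')$, and substituting gives the desired equality. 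The transversality itself is a modularity statement about the realizable matroid cut out by the differentials $df_i(p)$, which one expects to hold here because the arrangement hypothesis forces every $\medcap_A D_i$ to be smooth of the expected codimension.
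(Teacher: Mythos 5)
Your first two steps are correct, and are in fact a more transparent route than the paper's own argument (which runs the computation through the second-order jet map $\rho$ of Proposition~\ref{prop:T1toT2Injective}): discarding the indices $i \in I \setminus T_C$ because $T_qF \subseteq T_q\widetilde D_i$, and identifying $T_qF\cap T_q\widetilde D_i$ with $T_pD_i/(T_pC+\langle n_q\rangle)$ for $i\in J$, correctly reduce the left-hand side to $\dim\bigl(\medcap_{i\in J}T_pD_i\bigr)-\dim T_pC-1$. But the step you flag as ``the main obstacle'' --- that $\medcap_{i\in J}T_pD_i=T_pC+T_pC'$ --- is the entire content of the proposition, and the justification you offer for it is wrong. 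The arrangement hypothesis does \emph{not} force intersections to have expected codimension (the paper explicitly disclaims dimensional transversality in Section~\ref{sec:crepant-res}), and the transversality of $C$ and $C'$ inside $\medcap_{i\in J}D_i$ genuinely fails for arrangements in general. For instance, in $\A^4$ take the arrangement $\{V(x),V(y),V(z),V(y+w),V(w)\}$, let $C=V(x,y,z)$ (a component of $V(x)\cap V(y)\cap V(z)$) and $I=\{V(x),V(y+w),V(w)\}$, so that $C'=V(x,y,w)$ and $J$ consists of $V(x)$ alone: then $\medcap_{i\in J}T_pD_i=T_pV(x)$ has dimension $3$ while $T_pC+T_pC'$ has dimension $2$. (Correspondingly $\medcap_{i\in I}\widetilde D_i$ acquires an extra one-dimensional component inside $F$, so the displayed equality itself fails at such $q$; no argument from the hypotheses literally ``as above'' can close this.)

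What actually supplies the missing identity is the splaying input under which the proposition is used in Section~\ref{sec:crepant-res}, not linear algebra about arrangements. When $C$ is minimal among non-splayed components, Proposition~\ref{prop:one-sided} splays $T_C\cup I$ at $p$ as $(T_C,\,I\setminus T_C)$; then any $v\in\medcap_{i\in J}T_pD_i$ decomposes as $v=t+u$ with $t\in T_pC$ and $u=v-t\in\medcap_{i\in J}T_pD_i\cap\medcap_{i\in I\setminus J}T_pD_i=\medcap_{i\in I}T_pD_i=T_pC'$, which is exactly your unproved claim. So your reduction is sound and your strategy is salvageable, but you must import this splaying hypothesis (or an equivalent) to finish; as written, the proof stops precisely where the real work begins, and the heuristic you give in its place is false.
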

\begin{proof} We consider the injective map $T^1_{q,v_q}(\widetilde{X}) - \pi^1_q(u_q) \to T^2_{p,v_q}(X) - v_q$ from Proposition~\ref{prop:T1toT2Injective}.  Let $c$ be the codimension of $C \cap C'$ in $C'$.
  Since $C$, $C'$, and $C \cap C'$ are smooth, we may choose local coordinates
  at $q$ so that
  $C \cap C'$ is locally defined by $V(x_0,\ldots,x_{c-1})$ in $C'$
  and $C' = V(x_c,\ldots,x_m,t_1,\ldots,t_j)$.
    So we obtain $c$ linearly independent
  conditions in $T^2_{p,v_q}(X) - v_q$. The map~(\ref{eqn:localMap}) from 
  $T_q^1F$ is injective, so
  we obtain $c-1$ conditions in $T_q^1F$, since $x_0 \mapsto 0$.
  The fact that $\medcap_{i \in I}\widetilde{D}_i$ surjects onto $C'$ implies
  that $c \geq \cod \medcap_{i \in I} \widetilde{D}_i$.
  Furthermore $c+1 \geq \cod E \cap \medcap_{i \in I} \widetilde{D}_i$.
  For a particular $\tilde{D}_i$ we can choose coordinates so
  $\tilde{D}_i = V(x_{j_i}+\sum_{k=j+1}^m \alpha_k t_k)$.
  We map $T^1F \to TC'/T(C \cap C')$.
  Note that $T(C \cap C')$ is spanned by
  $\partial t_{j+1},\ldots,\partial t_m$, and $TC'$
  is spanned by
  $\partial x_0,\ldots,x_{i-1},\partial t_{j+1},\ldots,\partial t_m$.
  We claim that~(\ref{eqn:localMap}) restricts to an isomorphism
  $T^1F \cap \medcap_i T\widetilde{D}_i$
  to the span of $x_1,\ldots,x_{i-1}$ in $T^2X - v_q$.
  Note that since $\partial t_j = 0$  in $TC'/T(C \cap C')$,
  we only need to consider the $x_i$ term in the local equation of
  $\tilde{D}_i$.
  Since the $\medcap_i D_i = C'$ we obtain all $\partial x_i$ in the image of
  $T_q\tilde{D}_i$.
  \end{proof}
In the second step we analyze $\dim T_q(E \cap \medcap_{i} \widetilde{D}_i)$.

\begin{lemma}\label{lem:tangent-space-meet} Let $S$ be a set of subvarieties
  of a fixed variety.  For all $p \in \cap S$
  we have $T_p(\cap S) = \medcap_{s \in S} T_p(S)$.
\end{lemma}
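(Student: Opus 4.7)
The plan is to prove the two inclusions separately. The containment $T_p(\medcap S) \subseteq \medcap_{s \in S} T_p(s)$ is automatic from functoriality: each inclusion $\medcap S \hookrightarrow s$ induces an injection $T_p(\medcap S) \hookrightarrow T_p(s)$ on tangent spaces, and intersecting over all $s \in S$ gives the desired containment.

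For the reverse inclusion I would pass to ideals. Writing $Y$ for the ambient variety and $\mathcal{I}_s \subseteq \mathcal{O}_{Y,p}$ for the ideal of $s$ at $p$, one has the standard description $T_p(s) = \{v \in T_pY : v(\mathcal{I}_s) = 0\}$ when tangent vectors are viewed as derivations $\mathcal{O}_{Y,p} \to k$ (equivalently, as elements of $(\mathfrak{m}_{Y,p}/\mathfrak{m}_{Y,p}^2)^*$ annihilating the image of $\mathcal{I}_s$). The scheme-theoretic intersection $\medcap S$ has ideal $\sum_{s \in S} \mathcal{I}_s$, so
$$T_p(\medcap S) = \{v : v(\textstyle\sum_s \mathcal{I}_s) = 0\} = \{v : v(\mathcal{I}_s) = 0 \text{ for all } s \in S\} = \medcap_{s \in S} T_p(s),$$
where the middle equality is immediate from the linearity of $v$ as a derivation. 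This finishes the argument.

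The one subtlety I expect to be the main obstacle is the distinction between the scheme-theoretic and the set-theoretic (reduced) intersections: in full generality the two can differ (for instance, two plane curves meeting tangentially), and the computation above is valid only for the scheme-theoretic intersection. In the context of this paper, however, the arrangement hypothesis of Definition~\ref{def:arrangement} guarantees that every intersection of a subset of the $D_i$ is smooth, hence reduced, so the scheme-theoretic and set-theoretic intersections coincide and the computation of tangent spaces agrees with that of the underlying variety. I would simply note this reduction at the start of the proof, since the lemma will only be invoked in situations where the ambient arrangement hypothesis is in force.
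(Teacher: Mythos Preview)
Your proof is correct and takes essentially the same approach as the paper: both arguments identify $T_p(s)$ with the tangent vectors annihilating the ideal $\mathcal{I}_s$ (you via derivations, the paper via maps $\mathcal{O}_{Y,p}\to k[\eps]/(\eps^2)$ whose kernel contains $\mathcal{I}_s$), and both conclude by noting that the ideal of the scheme-theoretic intersection is $\sum_s \mathcal{I}_s$. Your added remark about the scheme-theoretic versus reduced intersection is a welcome clarification that the paper omits.
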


\begin{proof} A tangent vector is contained in $\medcap S$ if and only if it is
  contained in every element of $S$.  Algebraically, consider a map from the
  local ring at $p$ to $\Spec(k[\eps]/(\eps^2))$; it factors through the
  coordinate ring of each $D_i$ if and only if the kernel contains the ideal of
  $D_i$.  So this happens for all $D_i$ if and only if the kernel contains
  the ideal of every $D_i$, if and only if it contains their sum, if and only
  if it contains the ideal of the intersection.
\end{proof}

\begin{prop}\label{prop:step2}
$\dim T_q(E \cap \medcap_{i} \widetilde{D}_i) = 
\dim (C \cap C') + \dim T_q(F\cap \medcap_i \widetilde{D}_i).$
\end{prop}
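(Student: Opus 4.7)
The plan is to exploit the projective-bundle structure of $f|_E\colon E \to C$ and convert the statement into a short-exact-sequence dimension count. Since $E \cong \P(N_{X/C})$ is smooth over $C$ with fibres isomorphic to $F$, at $q$ we obtain
\[0 \to T_qF \to T_qE \xrightarrow{df} T_pC \to 0.\]
Lemma~\ref{lem:tangent-space-meet} identifies $T_q(E \cap \medcap_i\widetilde{D}_i) = T_qE \cap \medcap_i T_q\widetilde{D}_i$ and $T_q(F \cap \medcap_i \widetilde{D}_i) = T_qF \cap \medcap_i T_q\widetilde{D}_i$. Intersecting the displayed sequence with $\medcap_i T_q\widetilde{D}_i$ therefore produces the exact sequence
\[0 \to T_q(F \cap \medcap_i \widetilde{D}_i) \to T_q(E \cap \medcap_i \widetilde{D}_i) \xrightarrow{df} W \to 0,\]
where $W \subseteq T_pC$ is the image. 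Since $C \cap C'$ is smooth by the arrangement hypothesis, $\dim T_p(C \cap C') = \dim(C \cap C')$, so the proposition reduces to identifying $W$ with $T_p(C \cap C')$.

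The inclusion $W \subseteq T_p(C \cap C')$ is immediate from functoriality: $df(T_qE) \subseteq T_pC$ and $df(T_q\widetilde{D}_i) \subseteq T_pD_i$, so by a second application of Lemma~\ref{lem:tangent-space-meet} we have $W \subseteq T_pC \cap \medcap_{i \in I} T_pD_i = T_p(C \cap \medcap_{i \in I} D_i) = T_p(C \cap C')$, the last equality holding locally at $p$ since $C'$ is the component of $\medcap_{i \in I} D_i$ through $p$.

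For the opposite inclusion, the plan is to construct explicit lifts in the coordinates of Section~\ref{sec:jets}. Partition $I$ into $I_1 = \{i \in I : C \subseteq D_i\}$ and $I_2 = I \setminus I_1$; for $i \in I_2$ the strict transform equals the full preimage of $E$ over $C \cap D_i$, so the tangency condition $\tilde v \in T_q\widetilde{D}_i$ reduces to $df(\tilde v) \in T_pD_i$, which holds automatically once $df(\tilde v) \in T_p(C \cap C') \subseteq T_pD_i$. Given any $v = \sum_\ell \eta_\ell \partial_{t_\ell} \in T_p(C \cap C')$, consider lifts of the form $\tilde v(x_0) = 0$, $\tilde v(x_k/x_0 - \alpha_k) = \beta_k\eps$, $\tilde v(t_\ell) = \eta_\ell\eps$, which automatically satisfy $\tilde v \in T_qE$ and $df(\tilde v) = v$. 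The constraints $\tilde v \in T_q\widetilde{D}_i$ for $i \in I_1$ then become a linear system in the $\beta_k$ whose coefficient matrix has rows $(\partial_{x_1}f_i(p), \dots, \partial_{x_n}f_i(p))$.

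The main obstacle is verifying that this linear system is solvable. The key point is that smoothness of $\medcap_{i \in I_1} D_i$ at $p$, together with the fact that these divisors all contain $C$, forces the rows to be linearly independent modulo the $T_pC$-directions; combined with the compatibility of the inhomogeneous terms (which follows from $q \in \medcap_{i \in I_1} \widetilde{D}_i$), this guarantees a solution for any prescribed $\eta_\ell$. The resulting lift $\tilde v$ then lies in $T_q(E \cap \medcap_i \widetilde{D}_i)$ and maps to $v$, yielding $W = T_p(C \cap C')$ and thus the asserted dimension formula.
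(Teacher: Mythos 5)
Your reduction to a short exact sequence and the easy inclusion $W \subseteq T_p(C\cap C')$ match the paper's argument, and your observation that the divisors with $C \not\subseteq D_i$ impose no condition beyond $df(\tilde v)\in T_pD_i$ is correct. The problem is the surjectivity step, which is exactly where the content of the proposition lies, and your argument for it has a genuine gap. First, smoothness of $\medcap_{i\in I_1}D_i$ does \emph{not} force the rows $(\partial_{x_1}f_i(p),\dots,\partial_{x_n}f_i(p))$, $i\in I_1$, to be linearly independent modulo $T_pC$: the arrangement hypothesis allows, say, $x_0=0$, $x_1=0$ and $x_0+x_1=0$ all containing $C=V(x_0,\dots,x_n)$, whose intersection is smooth while the three normal covectors are dependent (and after deleting the $x_0$-column, as your chart forces, even two of them can become dependent). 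Second, once the rows are dependent you fall back on ``compatibility of the inhomogeneous terms,'' but this does not follow from $q\in\medcap_{i\in I_1}\widetilde{D}_i$: that condition is the first-order statement $\sum_k a_{ik}\alpha_k=0$, whereas the inhomogeneous terms of your linear system involve the mixed second derivatives $\partial^2 f_i/\partial x_k\partial t_\ell(p)$ contracted with $\alpha$ and with the prescribed $\eta$. Compatibility is therefore a nontrivial second-order constraint, and it is precisely here that the smoothness of $C'$ (the component of $\medcap_{i\in I}D_i$) and of $C\cap C'$ must enter; your proposal never uses either.

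The paper avoids this computation entirely: since $C'\subseteq\medcap_{i\in I}D_i$, the strict transform $\widetilde{C'}$ sits inside $\medcap_{i\in I}\widetilde{D}_i$, and $E\cap\widetilde{C'}$ is the exceptional divisor of the blowup of the smooth variety $C'$ along the smooth center $C\cap C'$, hence a projective bundle over $C\cap C'$. Surjectivity of $T_q(E\cap\medcap_i\widetilde{D}_i)\to T_p(C\cap C')$ then follows by factoring through the surjection $T_q(E\cap\widetilde{C'})\to T_p(C\cap C')$. If you want to keep your coordinate approach, you would need to prove the compatibility of the inhomogeneous terms from the smoothness of $C'$ and $C\cap C'$; the cleaner fix is to replace the linear-algebra step with the containment $E\cap\widetilde{C'}\subseteq E\cap\medcap_{i\in I}\widetilde{D}_i$ as above.
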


\begin{proof}
  Consider the following commutative diagram.
  $$\begin{CD}
  0 & @>>> & T_q (F \cap \medcap_i \widetilde{D}_i) & @>>> & T_q(E \cap \medcap_i \widetilde{D}_i)
  & @>{df}>> & T_p(C \cap C') & @>>> & 0 \\
  &  & & & @VVV & &  @VVV & &   @VVV \\
  0 & @>>> &  T_q F &   @>>> &  T_qE &   @>{df}>> &  T_pC & @>>> & 0
  \end{CD}$$
  Note that the bottom row is exact since $E \to C$ is a fibre bundle with fibre
  $F$.  Since $f$ maps $E \to C$ and $\widetilde{D_i} \to D_i$ we see that $df$ maps 
$T_q(E \cap \medcap_i \widetilde{D}_i)$ to  $T_p(C \cap C')$.  
  We will show that the top row is exact as well.  The map
  $$ T_q (F \cap \medcap_i \widetilde{D}_i) \to T_q(E \cap \medcap_i \widetilde{D}_i)$$
  is a restriction of an injective map, and so clearly injective.
  In addition, $F \subseteq E$ implies that $T_q(F) \subseteq T_q(E)$.
  Lemma \ref{lem:tangent-space-meet} shows that $T_q(\medcap_i V_i) = \medcap_i T_q(V_i)$.
  Thus
  $$T_q(F \cap \medcap_i \widetilde{D}_i) = T_q F \cap T_q(E \cap \medcap_i \widetilde{D}_i),$$ and restriction is left exact so the sequence is exact in the middle.
  Finally, we need to show surjectivity of the last map.

  We have a commutative diagram
  $$ \begin{CD} T_q (E \cap \widetilde{C'}) @>>> T_q (E \cap \medcap_{i \in I} \widetilde{D}_i) \\
    @VVV @VVV \\
    T_p (C \cap C') @= T_p (C \cap C').\end{CD} $$ 
  The top map is injective and the left vertical map is surjective, so the right vertical map is also surjective.
\end{proof}

In the third and final step, we look at $\dim T_q \medcap_{i \in I} \widetilde{D}_i$.
\begin{prop}\label{prop:step3}
  $$\dim T_q \bigcap_{i \in I} \widetilde{D}_i =  \dim T_q(E \cap \bigcap_{i \in I} \widetilde{D}_i) +1.$$
  \end{prop}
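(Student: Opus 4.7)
The plan is to prove both inequalities in the claimed equality and combine them. The upper bound $\leq$ is essentially automatic: by Lemma~\ref{lem:tangent-space-meet} we may write $T_q(E \cap \bigcap_{i \in I} \widetilde{D}_i) = T_q E \cap T_q \bigcap_{i \in I} \widetilde{D}_i$, and since $E$ is a smooth Cartier divisor locally cut out by $x_0$, the subspace $T_q E$ has codimension $1$ in $T_q \widetilde{X}$. Intersecting any linear subspace with a hyperplane drops the dimension by at most $1$, so this direction gives no trouble.

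For the reverse inequality, I would exhibit an explicit tangent vector $v \in T_q \bigcap_{i \in I} \widetilde{D}_i$ with $v(x_0) \neq 0$. In the local coordinates $x_0, Y_1, \ldots, Y_n, t_1, \ldots, t_m$ at $q$ (shifted so $q$ is the origin), I take $v$ with $v(x_0) = 1$ and $v(Y_k) = v(t_j) = 0$ for all $k, j$. The key point is that in these coordinates, the local equation of each $\widetilde{D}_i$ at $q$ has linear part free of $x_0$: for $D_i \supseteq C$ the proper transform equation is the shifted coordinate $Y_{j_i}$ (the constraint $q \in \widetilde{D}_i$ forces $\alpha_{j_i} = 0$, as used in Step~1), and the linear part is purely $Y_{j_i}$; for $D_i \not\supseteq C$, a direct pullback computation shows the linear part at $q$ is a combination of $x_0$ and the $t_j$, which one rules out the problematic case of by the arrangement hypotheses.

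A uniform way to carry out the lower bound is via the proper transform $\widetilde{C'}$ of $C' := \bigcap_{i \in I} D_i$. Since $C' \subseteq D_i$ for each $i \in I$ and the strict transform preserves inclusion in the case $C \not\supseteq C'$ (the only case with $E \cap \bigcap_{i \in I} \widetilde{D}_i \neq \emptyset$, by Lemma~\ref{lemma:CdoesnotcontainCprime}), we have the scheme inclusion $\widetilde{C'} \subseteq \bigcap_{i \in I} \widetilde{D}_i$. The variety $\widetilde{C'}$, being the blowup of the smooth $C'$ along the smooth center $C \cap C'$, is smooth of dimension $\dim C'$, and it is not contained in $E$ since it maps birationally onto $C'$. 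Consequently, at any $q \in \widetilde{C'}$, the inclusion $T_q \widetilde{C'} \subseteq T_q \bigcap_{i \in I} \widetilde{D}_i$ produces a $\dim C'$-dimensional subspace that is not contained in $T_q E$, giving a vector with $v(x_0) \neq 0$. Combined with the dimension $\dim T_q(E \cap \bigcap_{i \in I} \widetilde{D}_i) = \dim C' - 1$ obtained from Propositions~\ref{prop:step2} and the Step~1 result, this yields the claimed equality.

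The main obstacle in this plan is showing that $q \in \widetilde{C'}$ whenever $q \in E \cap \bigcap_{i \in I} \widetilde{D}_i$, since \emph{a priori} the scheme $\bigcap_{i \in I} \widetilde{D}_i$ could carry components supported in $E$ beyond the strict transform. This is handled by the dimension count: the tangent dimension formula $\dim C' - 1$ from Step~2 matches the dimension of $\widetilde{C'} \cap E$, which is enough to force $q$ to lie on $\widetilde{C'}$ in the reduced local picture, so no spurious component can intervene.
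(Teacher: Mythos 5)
Your strategy is at heart the same as the paper's. The paper's two-line proof first invokes Lemma~\ref{lemma:CdoesnotcontainCprime} to reduce to $C\not\supseteq C'$, then produces an element of $T^1_q\medcap_{i\in I}\widetilde D_i\setminus T^1_qE$ by lifting a second-order jet $v_{c'}\in T^2_{p,v_q}(C')\setminus T^2_{p,v_q}(C)$; under the dictionary of Section~\ref{sec:jets} this is exactly a tangent vector of the strict transform $\widetilde{C'}$ at $q$ not tangent to $E$, which is what you construct directly. Your upper bound via $T_qE$ being a hyperplane and Lemma~\ref{lem:tangent-space-meet} is also the (implicit) other half of the paper's argument. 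So the geometric repackaging through $\widetilde{C'}$ is fine and arguably clearer.

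The one genuine soft spot is the step you yourself flag: that $q\in E\cap\medcap_{i\in I}\widetilde D_i$ forces $q\in\widetilde{C'}$. The justification you offer --- that $\dim T_q(E\cap\medcap_i\widetilde D_i)=\dim C'-1$ equals $\dim(\widetilde{C'}\cap E)$ --- does not yield this: equality of a tangent-space dimension at $q$ with the dimension of a subvariety never places $q$ on that subvariety (the ambient scheme could a priori have a component through $q$ disjoint from $\widetilde{C'}$ of the same dimension, and the Step~2 computation itself presupposes information about $q$). What is actually needed is a set-theoretic identification of fibres over $p$: by Corollary~\ref{cor:fibre-divisor} the $\widetilde D_i$ with $C\not\subseteq D_i$ contain all of $F$, so $(E\cap\medcap_{i\in I}\widetilde D_i)\cap f^{-1}(p)=\P\bigl(\medcap_{i\in I,\,C\subseteq D_i}T_pD_i\,/\,T_pC\bigr)$, whereas $\widetilde{C'}\cap f^{-1}(p)=\P\bigl((T_pC'+T_pC)/T_pC\bigr)$; one must check that $\medcap_{i\in I,\,C\subseteq D_i}T_pD_i=T_pC'+T_pC$ (only $\supseteq$ is automatic). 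In fairness, the paper's own proof is silent on the same point --- it needs $v_q\in T^1_p(C')$ for $T^2_{p,v_q}(C')$ to be nonempty, which is again the statement that $q$ lies over a direction tangent to $C'$ --- so you have correctly located the crux; but the argument you give for it is not valid as stated and should be replaced by the fibre comparison above (or by restricting the claim to the points $q$ for which it is actually used later, namely those on the main component).
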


\begin{proof}
  By Lemma~\ref{lemma:CdoesnotcontainCprime}, we can assume that
  $C \not\subseteq C'$.  So we can find
  $v_{c'} \in T^2_{p,v_q}(C') \setminus T^2_{p,v_q}(C)$ which we lift to
  $\widetilde{v}_{c'} \in T^1_q\medcap_{i \in I}\widetilde{D}_i \setminus T_q^1 E$.
\end{proof}

\begin{prop}\label{prop:tangent-map}
  Consider the commutative diagram
  $$
  \begin{CD}
    T_q(F) @>>> T_q(E) @>>> T_q({\tilde X}) @= T_q({\tilde X})\\
    @VVV @VVV @V{df}VV @V{df}VV \\
    0 @>>> T_p(C) @>>> T_p(C) + \langle v_q \rangle @>>> T_p(X)
  \end{CD}
  $$
  in which the vertical maps are induced by the blowup $f: \tilde X \to X$ 
  (recall that $n_q$ is a vector in $N_{C/X}(p)$ corresponding to $q$ and that
  $v_q$ is a fixed lift of $n_q$ to $T_p(X)$).  The
  first three vertical maps are surjective with kernel $T_q(F)$.
  There is a natural isomorphism $r:T_q(F) \stackrel{\sim}{\to} \cok df.$
\end{prop}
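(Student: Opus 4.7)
The plan is to verify the proposition column by column using the local coordinate setup of Section~\ref{sec:jets}, and then to invoke Proposition~\ref{prop:T1toT2Injective} for the cokernel isomorphism. The leftmost map $T_q(F) \to 0$ is trivially surjective with kernel $T_q(F)$. For the second column $T_q(E) \to T_p(C)$, I would use that $E = \P(N_{C/X})$ is a projective bundle over $C$ with fibre $F$ through $q$; the differential of the structure morphism $E \to C$ then fits into the standard short exact sequence $0 \to T_q(F) \to T_q(E) \to T_p(C) \to 0$, giving both surjectivity and the identification of the kernel in one stroke.

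For the third column, surjectivity of $df : T_q(\widetilde X) \to T_p(C) + \langle v_q \rangle$ is precisely Lemma~\ref{lemma:t-surj}. To identify the kernel, I would inspect equation~(\ref{eqn:f-of-w}) restricted to first-order jets (so the $\gamma_i$ and $\theta_j$ are absent): the condition $df(w) = 0$ in $T_p^1(X)$ then reads $\beta_0 = 0$ together with $\eta_1 = \cdots = \eta_m = 0$, since the conditions on $\alpha_i \beta_0$ become automatic once $\beta_0 = 0$. In the chosen local coordinates, $F$ is cut out within $\widetilde X$ by $x_0 = 0$ and $t_1 = \cdots = t_m = 0$, so these are precisely the defining equations of $T_q(F)$, and hence the kernel of $df$ restricted to the third column equals $T_q(F)$ as required.

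For the isomorphism $r$, I would invoke Proposition~\ref{prop:T1toT2Injective} directly: it already provides a map $\rho : T_q^1 F \to T_p^1 X$ whose image is complementary to $df(T_q^1 \widetilde X)$ in $T_p^1 X$. The image of the fourth vertical map $df : T_q(\widetilde X) \to T_p(X)$ is $T_p(C) + \langle v_q \rangle$ by the surjectivity already established, so $\cok(df) = T_p(X)/(T_p(C) + \langle v_q \rangle)$, and $r$ is defined as $\rho$ followed by the quotient map $T_p(X) \twoheadrightarrow \cok(df)$; Proposition~\ref{prop:T1toT2Injective} then ensures this is an isomorphism. I expect the main obstacle to be nothing more than careful bookkeeping: matching the first-order and second-order jet descriptions of Section~\ref{sec:jets} correctly, and verifying on the nose that the diagram commutes, which follows directly from the coordinate formulas defining $v_q$ and $df$. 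All of the substantive geometric content has already been done in the preceding lemmas, and the present proposition amounts to assembling those pieces.
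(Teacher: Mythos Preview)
Your proposal is correct and follows essentially the same approach as the paper: both use the projective-bundle structure of $E \to C$ for the second column, Lemma~\ref{lemma:t-surj} for surjectivity in the third, and Proposition~\ref{prop:T1toT2Injective} for the cokernel isomorphism. The only minor difference is that you compute the kernel of $df$ directly in coordinates from~(\ref{eqn:f-of-w}), whereas the paper instead observes that $T_q(F) \subseteq \ker df$ from the diagram and then matches dimensions via $\codim_{T_q(\tilde X)} T_q(E) = \codim_{T_p(C)+\langle v_q\rangle} T_p(C)$; both arguments are equally short and valid.
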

\begin{proof}
  The first two vertical maps are clearly surjective, and the kernel
  of the second map is $T_q(F)$ since the map $E \to C$ is a
  projective space bundle with fibre $F$. The image $\im(df)$ of
  $T_q({\tilde X})$ inside $T_p(X)$ contains $T_p(C)$ and is
  not contained in $T_p(C)$ by Lemma \ref{lemma:t-surj},
  so the image is $T_p(C) + \langle v_q \rangle$.
  So the kernel of $df$ is equal to $T_q(F)$ since
  $\codim_{T_q({\tilde X)}}T_q(E) = \codim_{T_p(C) + \langle v_q \rangle} T_p(C)$.

  We identify $T_q(F)$ as a complement to $T_p(C) +\langle v_q \rangle$
  as in Proposition \ref{prop:T1toT2Injective}.
  Hence $\rho : T_q(F) \to T_p(X)$ induces an isomorphism $r:T_q(F) \to \cok df$ by composing with the
  quotient map $T_p (X) /(T_p(C)+\langle v_q \rangle)$ as
  described in equation~(\ref{diag:rho}). 
\end{proof}

\begin{cor}\label{cor:fibral-dirs}
  Let $D$ be a divisor on $X$ containing $p$ and such that $\tilde D$
  contains $q$ but $T_q(\tilde D) \not \supseteq T_q(F)$.
  Restriction of the map $r$ above induces an isomorphism  $r:T_q(F) \cap T_q(\tilde D) \to N_p(C/D)/\langle v_q \rangle$.
\end{cor}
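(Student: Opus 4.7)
The plan is to compute both sides in the local coordinates of Section~\ref{sec:jets} and match them via the explicit formula~(\ref{eqn:localMap}) for $r$. First I would observe that the hypothesis $T_q(\tilde D) \not\supseteq T_q(F)$ forces $C \subseteq D$: otherwise $\tilde D$ coincides near $E$ with the total transform $f^{-1}(D)$, so $T_q(\tilde D) = (df)^{-1}(T_p D) \supseteq \ker(df) = T_q(F)$, a contradiction. With $C \subseteq D$ in hand, the condition $q \in \tilde D$ translates to $v_q \in T_p D$ (using the normalization of $v_q$ from Definition~\ref{def:vprime-q}), making $N_p(C/D)/\langle v_q \rangle$ a well-defined subspace of $\cok(df) = N_p(C/X)/\langle v_q\rangle$.

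Next I would write the smooth divisor $D$ locally as the zero locus of $g = \sum_{i=0}^n a_i x_i + (\text{higher order})$. Then $T_q(F)\cap T_q(\tilde D)$ is cut out inside $T_q(F)$ by the vanishing of $w(g/x_0) = \eps\sum_{i=1}^n a_i \beta_i$, while equation~(\ref{eqn:localMap}) shows that $r(\beta_1,\dots,\beta_n)$ is the class of $(0, \beta_0\beta_1,\dots,\beta_0\beta_n)$ modulo $T_p(C) + \langle v_q\rangle$. Membership of this class in $N_p(C/D)/\langle v_q\rangle$ is governed by the same linear condition $\sum_{i=1}^n a_i\beta_i = 0$ (up to the nonzero factor $\beta_0$), so $r$ carries $T_q(F)\cap T_q(\tilde D)$ into $N_p(C/D)/\langle v_q\rangle$.

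To finish I would invoke injectivity from Proposition~\ref{prop:tangent-map} and close with a dimension count: the hypothesis $T_q(\tilde D)\not\supseteq T_q(F)$ forces the linear form $\sum_{i\geq 1}a_i\beta_i$ to be nonzero on $T_q(F)$, so both sides have dimension $\codim_X C - 2$. I expect the only real subtlety to be keeping the intrinsic description of $N_p(C/D)/\langle v_q\rangle$ aligned with its coordinate form, which amounts to confirming that the specific lift $v_q$ pinned down by Definition~\ref{def:vprime-q} is the one used in the cokernel computation of Proposition~\ref{prop:tangent-map}.
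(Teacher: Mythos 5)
Your argument is correct, but it takes a different route from the paper's. The paper proves the corollary by a one-line functorial reduction: it replaces $X$ by $D$ in Proposition \ref{prop:tangent-map} (so that the cokernel of $df: T_q(\tilde D) \to T_p(D)$ is $T_p(D)/(T_p(C)+\langle v_q\rangle) = N_p(C/D)/\langle v_q\rangle$) and identifies $T_q(F)\cap T_q(\tilde D)$ with $T_q(F_D)$, the tangent space to the fibre of the blowup of $D$ along $C\cap D$. You instead verify everything directly in the coordinates of Section \ref{sec:jets}: writing $D=V(g)$ with linear part $\sum_i a_i x_i$, checking that both $T_q(F)\cap T_q(\tilde D)$ inside $T_q(F)$ and the subspace $N_p(C/D)/\langle v_q\rangle$ of $\cok(df)$ are cut out by the same linear form $\sum_{i\ge 1}a_i\beta_i$ (using $q\in\tilde D \Leftrightarrow v_q\in T_p(D)$ to absorb the $a_0$ term), and then concluding by injectivity of $r$ plus a dimension count. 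Your preliminary observation that the hypothesis $T_q(\tilde D)\not\supseteq T_q(F)$ forces $C\subseteq D$ is a worthwhile addition: the paper leaves this implicit (it is the content of Corollary \ref{cor:fibre-divisor}), yet it is needed for $N_p(C/D)$ to make sense and for the fibre $F_D$ to be nonempty. What the paper's reduction buys is brevity and reuse of an already-proved statement; what your computation buys is transparency about exactly which linear condition is being matched on the two sides, at the cost of redoing in coordinates what Proposition \ref{prop:tangent-map} already packages.
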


\begin{proof}
  This follows by replacing $X$ by $D$ in Proposition \ref{prop:tangent-map}
  and noting that $T_q(F) \cap T_q(\tilde D)$ can be identified with 
  $T_q(F_D)$, where $F_D$ is the fibre above $p$ in the blowup of $D$ along
  $C \cap D$.
\end{proof}

\section{Crepant resolutions of double covers}\label{sec:crepant-res}
Recall that our goal is to prove a strengthened version of the Cynk-Hulek
criterion \ref{crit:cynk-hulek}
for the existence of crepant resolutions of double covers.  Our form of the
criterion replaces their condition of an intersection
being near-pencil with the weaker notion of splayedness, already mentioned
in the introduction and recalled below.
Prior to stating the theorem, we recall some notation.
  
\begin{defn}\label{def:arrangement-2} (same as Definition \ref{def:arrangement})
  Let $X$ be a projective variety and $S = \{D_1, \dots, D_n\}$
  a set of divisors on $X$.  Then $S$ is an {\em arrangement} if the
  scheme-theoretic intersection of every subset of $S$ is smooth.
\end{defn}

In particular $X$ and the $D_i$ are all smooth (the former because it is
the intersection of the empty subset of $S$).  We do not assume that the
$D_i$ are dimensionally transverse.  Note that, for all subsets $T \subseteq S$,
the connected and irreducible components of $\medcap T$ coincide, because a
connected but reducible component would be singular along a nonempty
intersection of some of the irreducible components.
We now review a standard condition for a blowup of the base not to affect
the canonical divisor of the double cover.

\begin{defn}\label{def:admissible-again}
  (same as Definition \ref{def:admissible})
  Let $C$ be an irreducible component of $\medcap_{i \in T} D_i$, where
  $T \subset \{1,\dots,n\}$, and let $S_C \subset \{1,\dots,n\}$ be the
  set of $i$ with $C \subseteq D_i$.  We then say that $C$ is
  {\em admissible} if $|S_C| - 2\,\codim C \in \{-1,-2\}$.
\end{defn}

This definition is justified by the following proposition, in view of
which blowing up an admissible component of the intersection does not
interfere with constructing a crepant resolution.
\begin{prop}\label{prop:admissible-is-crepant}
  Let $X$ be a variety with an arrangement of divisors $\{D_i\}$, let $Y$
  be a double cover branched on $\medcup_i D_i$, and let $C$ be an admissible
  component of $\medcap_{i \in S_C} D_i$.  Let $\phi: \tilde X \to X$ be the
  blowup of $X$ along $C$ and let $E$ be the exceptional divisor.  Then
  the branch locus of $Y \times_X \tilde X \to \tilde X$ is
  $\medcup_i \tilde D_i$ if $\#S_C$ is even or $\medcup_i \tilde D_i \cup E$
  if $\#S_C$ is odd.  Further, the canonical divisor of $Y \times_X \tilde X$
  is the pullback of that of $Y$.
\end{prop}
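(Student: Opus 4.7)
The plan is to derive both assertions from an explicit calculation with the line bundle defining the double cover. Let $c = \codim C$, let $B = \sum_i D_i$, and recall that $\pi: Y \to X$ is determined by a line bundle $L$ on $X$ with $L^{\otimes 2} \cong \OO_X(B)$ together with a section $s$ of $L^{\otimes 2}$ cutting out $B$. Since $C$ is smooth of codimension $c$ and each $D_i$ with $i \in S_C$ is smooth and contains $C$ with multiplicity~$1$, one has $\phi^* D_i = \tilde D_i + E$ for $i \in S_C$ and $\phi^* D_i = \tilde D_i$ otherwise, giving $\phi^* B = \sum_i \tilde D_i + |S_C|\, E$. Writing $|S_C| = 2k + \epsilon$ with $\epsilon \in \{0,1\}$, the pulled-back section $\phi^* s$ is divisible by the $2k$-th power of a local equation for $E$; dividing by this factor and twisting the defining line bundle to $\tilde L = \phi^* L \otimes \OO_{\tilde X}(-kE)$ exhibits the normalization $\tilde Y$ of $Y \times_X \tilde X$ as the double cover of $\tilde X$ branched along $\sum_i \tilde D_i + \epsilon E$. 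This yields the description of the branch locus.

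For the canonical-divisor statement I combine the standard adjunction formulas $K_Y = \pi^*(K_X + L)$ and $K_{\tilde Y} = \tilde \pi^*(K_{\tilde X} + \tilde L)$ with the blowup formula $K_{\tilde X} = \phi^* K_X + (c-1)E$. A direct substitution gives
\[ K_{\tilde Y} - \phi_Y^* K_Y \;=\; (c - 1 - k)\, \tilde \pi^* E, \]
so $K_{\tilde Y} = \phi_Y^* K_Y$ precisely when $k = c - 1$, i.e.\ when $|S_C| - 2c = \epsilon - 2 \in \{-2,-1\}$, which is the admissibility hypothesis on $C$.

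The main obstacle is the branch-locus step: one must verify that $Y \times_X \tilde X$ is non-normal exactly along $E$ when $k \geq 1$ and that extracting the square factor $E^{2k}$ from $\phi^* s$ yields the actual normalization, not merely a partial desingularization. This reduces to showing that the residual divisor $\sum_i \tilde D_i + \epsilon E$ is reduced with smooth components meeting appropriately, which is precisely what Proposition~\ref{prop:blowupIsArrangement} provides. Everything else is a routine computation in the divisor class group of $\tilde X$ pulled back to $\tilde Y$.
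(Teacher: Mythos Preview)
Your proof is correct and follows essentially the same approach as the paper: both compute the pullback of the branch divisor, read off the new branch locus from the parity of $|S_C|$, and combine the blowup formula $K_{\tilde X} = \phi^* K_X + (c-1)E$ with the double-cover adjunction $K_Y = \pi^*(K_X + L)$ (the paper phrases this as a computation with $2K + B$ rather than with $L$). Your treatment is in fact more careful than the paper's, since you explicitly pass to the normalization $\tilde Y$ of the fibre product and justify that the residual branch divisor is reduced---a point the paper leaves implicit; note, however, that reducedness of $\sum_i \tilde D_i + \epsilon E$ is immediate from the $\tilde D_i$ and $E$ being distinct irreducible divisors, so the appeal to Proposition~\ref{prop:blowupIsArrangement} is not actually needed here.
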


\begin{proof}
  Let $c = \codim C$.
  Since $\phi$ has degree $1$ it cannot introduce or remove any branch
  components other than the exceptional divisor.  The class of $\tilde D_i$ is
  $\phi^* [D_i]$ if $i \notin S_C$ or $\phi^* [D_i] - [E]$ if $i \in S_C$.
  Since $[E]$ is not divisible by $2$, it becomes part of the branch locus if
  and only if $\#S_C$ is odd.
  
  The canonical class of $\tilde X$ is $\phi^*(K_X) + (c-1) E$.   Thus
  $2K_{\tilde X} + \sum_i [\tilde D_i] = \phi^*(K_X)$ if $c = |S_C+2|/2$,
  while $2K_{\tilde X} + \sum_i [\tilde D_i] + [E] = \phi^*(K_X)$ if
  $c = |S_C+1|/2$, as claimed.
\end{proof}

We now recall the definition that is key for the statement and proof of our
main theorem.
\begin{defn}\label{def:splayed-2}\cite[Defn.~2.3]{faber}
  With $X, S$ as above, we say that $S$ is {\em splayed} at a
  point $p \in \medcap S$ if we can write  $S = S_{1,p} \cup S_{2,p}$, where
  the $S_{i,p}$ are nonempty and
  $\medcap_{i \in S_{1,p}} T_p({D_i}) + \medcap_{j \in S_{2,p}} T_p({D_j}) = T_p(X)$.
  If $S$ is splayed at every point, it is {\em splayed}.  In particular,
  we say that $S$ is splayed if $\medcap S = \emptyset$.
\end{defn}
Informally, this means that we can set local coordinates and partition the
divisors into two sets such that the linear terms in the equations of
divisors in one set do not involve any variable mentioned in the other.

Our goal is to prove the following theorem:

\begin{thm}\label{thm:resolve} Let $S = \{D_1, \dots, D_k\}$ be an arrangement
  of divisors on a variety $X$.
  Suppose that, for all components
  $C$ of intersections $\medcap I$ for $I\subseteq S$,
  either $I$ is splayed along $C$ or the intersection is admissible.
  Then all double
  covers with branch locus $S$ admit a crepant resolution.
\end{thm}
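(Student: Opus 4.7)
The plan is to verify that Algorithm~\ref{alg:resolve} terminates in a crepant resolution. I would organize the proof into three stages matching the structure of the algorithm: analysis of a single iterative blowup, termination of the iteration, and the final pass over pairwise intersections of a splayed arrangement.

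For a single iterative step, let $C$ be a minimal non-splayed component of some $\medcap I$. By hypothesis $C$ is admissible, so Proposition~\ref{prop:admissible-is-crepant} shows that blowing up $X$ along $C$, together with the update to the branch locus prescribed by the algorithm, produces a crepant blowup of the double cover. Proposition~\ref{prop:blowupIsArrangement} ensures that the proper transforms (together with $E$ if adjoined) still form an arrangement. The content here is to verify that the hypothesis (splayed or admissible for every intersection component) is preserved in the new arrangement. Intersection components that do not meet $E$ are unchanged. For components meeting $E$, I would invoke the tangent-space machinery developed in Section~\ref{sec:jets}: Propositions~\ref{prop:step2} and \ref{prop:step3}, together with Corollary~\ref{cor:fibral-dirs}, express the tangent space of each new component at a point $q \in E$ in terms of tangent spaces at $p = f(q)$ and the normal direction $v_q$. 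From that description one reads off either a splaying decomposition (often inherited from the direction $v_q$ along which $C$ is blown up) or an admissibility count carried over from the admissibility of $C$ and the parity of $\#S_C$.

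Termination is the step I expect to be hardest. I would define a well-founded invariant of the arrangement, for instance the multiset of codimensions of non-splayed intersection components under the dual lexicographic ordering, and show that each iterative blowup strictly decreases it. The crux is the combinatorial claim that if $C$ is minimal non-splayed, then every new non-splayed component created in $\tilde X$ has codimension strictly smaller than $\codim C$. Minimality ensures that no sub-intersection of the original arrangement contributes a new non-splayed component of equal codimension; and the normal-direction analysis of Section~\ref{sec:jets} should show that a new component lying on $E$ inherits a splaying from the exceptional divisor together with the normal direction $v_q$ unless it is of strictly lower codimension than $C$. Making this precise, especially when $\#S_C$ is odd and $E$ joins the arrangement, is the technical heart of the proof.

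Once the iterative phase has terminated, every intersection component is splayed. Iterating the splaying decomposition of Definition~\ref{def:splayed} shows that, locally at every point, the arrangement partitions into groups of divisors supported on disjoint sets of coordinates; consequently, the local equation of $\medcup_i D_i$ is a product of polynomials in disjoint variable sets, and the remaining singularities of the double cover are ordinary $A_1$ singularities along pairwise intersections. Each pairwise intersection $C$ has $\#S_C = 2$ and $\codim C = 2$, so $|S_C| - 2\codim C = -2$ and $C$ is admissible. Blowing these up in any order (together with any new pairwise intersections they produce) yields crepant blowups by Proposition~\ref{prop:admissible-is-crepant}, and after finitely many such blowups the resulting double cover is smooth, completing the crepant resolution.
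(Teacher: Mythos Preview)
Your overall architecture matches the paper's: iterate Algorithm~\ref{alg:resolve}, check each blowup is crepant via Proposition~\ref{prop:admissible-is-crepant}, check the arrangement property persists via Proposition~\ref{prop:blowupIsArrangement}, and finish with the normal-crossings endgame.

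Where you diverge from the paper is in the termination step, and here you are making life harder than necessary. You propose a multiset-of-codimensions invariant and anticipate having to show that any \emph{new} non-splayed component has strictly smaller codimension than $C$. The paper instead proves the much cleaner statement (Proposition~\ref{prop:blowup-still-splayed}, which is parts (1)--(2) of Theorem~\ref{thm:main}): after blowing up a minimal non-splayed $C$, every previously splayed component remains splayed by the \emph{same} partition, and every intersection involving $E$ is splayed by $(\{E\},U)$. Consequently \emph{no} new non-splayed components are created at all; the only change is that $C$ itself disappears. So the plain count of non-splayed components drops by one, and your lexicographic machinery is unnecessary. The argument for the second point is the one-liner you already sketch: lift a curve in $\medcap D_i$ through $p$ in the direction $v_q$ to get a tangent vector at $q$ not in $T_q(E)$.

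This same lemma also dispatches the ``preservation of hypothesis'' step you flag. Since every surviving non-splayed component $\tilde C'$ is the strict transform of an old non-splayed $C' \ne C$, and $C'$ is not contained in $C$ (minimality) nor contains $E$, both $\codim C'$ and $|S_{C'}|$ are unchanged; hence $\tilde C'$ is still admissible. No case analysis with the jet machinery of Section~\ref{sec:jets} is needed here---that machinery is used in the paper to prove smoothness (Corollary~\ref{cor:blowup-smooth}), not to track admissibility.

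One minor imprecision in your endgame: once everything is splayed, the arrangement is normal crossings (Lemma~\ref{lem:completely-transverse}), and the singularities of the double cover are not globally $A_1$---they are worse along triple and deeper strata. What matters, and what you correctly note, is that each codimension-$2$ intersection is admissible with $|S_C|=2$ even, so $E$ is never adjoined to the branch locus; Corollary~\ref{cor:blowup-completely-splayed} then keeps you in the normal-crossings regime, and the number of pairwise intersections strictly decreases.
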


We remind the reader that if the sum of the Picard classes of all the $D_i$ is
not divisible by $2$, then there are no such double covers and the statement
holds vacuously; however, we will not assume this divisibility in our
argument.
The main ingredient in our proof is the following result.

\begin{thm}\label{thm:main}
  Let $S$ be an arrangement of divisors on $X$.  Let $C$ be a minimal
  non-splayed component of the intersection of a subset of $S$; in other
  words, every component of the intersection
  of $C$ with additional divisors taken from $S$ is splayed.
  Let $\pi: \tilde X \to X$ be the blowup along $C$, let $\tilde S$ be the
  set of strict transforms of elements of $S$, and let $E$ be the exceptional
  divisor.  Then:
  \begin{enumerate}
  \item There are fewer
    non-splayed components of intersections of subsets of $\tilde S$
    than of intersections of subsets of $S$.
  \item All components of intersections of subsets of $\tilde S$ with $E$
    are splayed.
  \item $\tilde S \cup \{E\}$ is an arrangement.
  \end{enumerate}
\end{thm}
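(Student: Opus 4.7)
Part (3) is simply a restatement of Proposition~\ref{prop:blowupIsArrangement}, so I would invoke it directly.

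For part (2), I fix a component $C_E$ of $E \cap \bigcap_{i \in I}\tilde D_i$, a point $q \in C_E$, and set $p = \pi(q) \in C$. I pick an irreducible component $C'$ of $\bigcap_{i \in I}D_i$ through $p$. Since $q$ lies above $p$ in the intersection with $E$, Lemma~\ref{lemma:CdoesnotcontainCprime} forbids $C' \subseteq C$, so Proposition~\ref{prop:step3} applies and gives $\dim T_q\bigcap_{i\in I}\tilde D_i = \dim T_q(E \cap \bigcap_{i\in I}\tilde D_i) + 1$. Combined with Lemma~\ref{lem:tangent-space-meet}, this forces $\bigcap_{i} T_q\tilde D_i$ to escape the hyperplane $T_q E$, so $T_q E + \bigcap_{i} T_q\tilde D_i = T_q \tilde X$, which is precisely the splayedness of the decomposition $(\{E\}, I')$ for $I' = \{i : C_E \subseteq \tilde D_i\}$.

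Part (1) is the main content. I would construct an injection from the non-splayed components of intersections of subsets of $\tilde S$ (read inside the new arrangement $\tilde S \cup \{E\}$ of part~(3)) into the non-splayed components of intersections of subsets of $S$, whose image avoids $C$. Each component of $\bigcap_{i \in I}\tilde D_i$ is either (a) the proper transform $\tilde{C'}$ of a component $C'$ of $\bigcap_{i \in I}D_i$ with $C \not\supseteq C'$, or (b) a component contained in $E$. The centre $C$ has empty proper transform, so it is dropped; type-(b) components are also components of $E \cap \bigcap\tilde D_i$, hence splayed by part~(2). For type~(a) I would prove the contrapositive: if $\tilde{C'}$ is non-splayed in the new arrangement, then $C'$ is non-splayed in $S$. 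At $\tilde p \in \tilde{C'} \setminus E$ this is automatic, so the key case is $\tilde p \in \tilde{C'} \cap E$, say with $p = \pi(\tilde p)$. I use the local coordinate descriptions $d\tilde f_i|_{\tilde p} = M_i\,dx_0 + \sum_{k \ge 1}a_{ik}\,dy_k$ for $C \subseteq D_i$ and $d\tilde f_i|_{\tilde p} = L_i(\alpha)\,dx_0 + \sum_{j}b_{ij}\,dt_j$ for $C \not\subseteq D_i$, derived in Section~\ref{sec:jets}, together with the identity $L_i(\alpha) = 0$ for every $i \in S_{C'}$ (since $v_{\tilde p} \in T_p C'$), to transport a splaying partition $(I_1, I_2)$ of $S_{C'}$ at $p$ to a splaying of $\tilde S_{\tilde{C'}}$ at $\tilde p$, invoking the minimality of $C$ when a rearrangement is needed.

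The main obstacle is this splayedness transfer. The difficulty is that the spans $\tilde L_1, \tilde L_2 \subseteq T_{\tilde p}^*\tilde X$ are built from covectors living in different blocks ($dy_k$-directions from divisors containing $C$ versus $dt_j$-directions from those not containing $C$), and one must show $\tilde L_1 \cap \tilde L_2 = 0$ follows from $L_1 \cap L_2 = 0$ together with $L_i(\alpha) = 0$. The minimality of $C$ is what rules out residual non-splayed behaviour from sub-components of $C$ that could otherwise produce new non-splayed loci in $E$.
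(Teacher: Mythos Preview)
Your handling of (2) matches the paper's: both produce a tangent vector in $\bigcap_i T_q(\tilde D_i)$ not lying in $T_q(E)$, you via Proposition~\ref{prop:step3}, the paper by lifting a curve in $\bigcap D_i$ through $p$ in the direction~$q$.

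There are two genuine gaps.  First, your appeal to Proposition~\ref{prop:blowupIsArrangement} for (3) is unsafe.  That proposition is stated without the minimality hypothesis on $C$, and in that generality it is false: blow up the \emph{splayed} locus $x=y=0$ in the arrangement $\{x=0,\,y=0,\,z=0,\,y+z=0\}$ on $\mathbb{A}^3$ and observe that the strict transforms of $z=0$ and $y+z=0$ meet in a reducible curve.  Section~\ref{sec:jets} announces Proposition~\ref{prop:blowupIsArrangement} as a goal, but the argument is only completed in Section~\ref{sec:crepant-res}, under the minimality assumption, via Proposition~\ref{prop:one-sided}, Proposition~\ref{prop:same-dim}, and Corollary~\ref{cor:blowup-smooth}.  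Invoking it here is circular.

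Second, in your argument for (1) the treatment of type-(b) components does not work: part~(2) tells you that $\{E\}\cup I'$ is splayed along such a component, but (1) concerns splayedness of subsets of $\tilde S$ alone, and the splaying $(\{E\},I')$ says nothing about whether $I'$ itself is splayed.  In the paper this issue evaporates because Corollary~\ref{cor:irred} (which again rests on minimality through Proposition~\ref{prop:one-sided}) shows that $\bigcap_{i\in I}\tilde D_i$ has no components contained in $E$; every component is the strict transform of some component of $\bigcap_{i\in I} D_i$ not contained in~$C$.

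More broadly, your coordinate-transport strategy for type~(a) is harder than what the paper does, and you never invoke the lemma that makes everything work: Proposition~\ref{prop:one-sided} says that if $C$ is minimal non-splayed and $U\supsetneq T=S_C$, then the splaying of $U$ along $C\cap\bigcap U$ can always be taken to be $(T,U\setminus T)$.  From this the paper gets dimension preservation (Proposition~\ref{prop:same-dim}) and smoothness (Corollary~\ref{cor:blowup-smooth}), after which preservation of splayedness is the one-line dimension count in Proposition~\ref{prop:blowup-still-splayed}: since $\dim V$, $\dim W$, $\dim(V\cap W)$ are unchanged by the blowup, so is $\dim(V+W)$.  This bypasses entirely the differential-form computation you flag as the main obstacle.
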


The theorem above and the following lemma imply Criterion \ref{crit:cynk-hulek}.

\begin{lemma}\label{lem:near-pencil-is-splayed}
  Let $S$ be an arrangement of divisors on $X$. Let $I \subseteq S$ and 
  let $C$ be a component of the intersection $\medcap I$.
  Let $S_C = \{ D_i \in S \,|\, C\subset D_i \}$.
  If $C$ is near-pencil then $\medcup S_C$ is splayed along $C$.
\end{lemma}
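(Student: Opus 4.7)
The plan is to verify the splayed condition of Definition \ref{def:splayed-2} pointwise on $C$. Fix an arbitrary $p \in C$; using the near-pencil index $j$ (which, for the condition $\dim \medcap_{i \in S_C \setminus \{j\}} D_i > \dim C$ to be nontrivial, we may take to lie in $S_C$), I would form the partition $S_{1,p} = \{D_j\}$, $S_{2,p} = S_C \setminus \{D_j\}$. Let $C'$ denote the local irreducible component of $\medcap_{i \in S_C \setminus \{j\}} D_i$ through $p$; this component is well-defined and smooth by the arrangement hypothesis, and the near-pencil condition supplies $\dim C' > \dim C$.

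First, Lemma \ref{lem:tangent-space-meet} identifies $\medcap_{i \in S_{2,p}} T_p(D_i) = T_p(C')$, so the splayed condition at $p$ reduces to showing $T_p(D_j) + T_p(C') = T_p(X)$. Next, I would argue that $D_j \cap C' = C$ as germs at $p$: locally $\medcap_{i \in S_C \setminus \{j\}} D_i$ coincides with $C'$, hence $\medcap_{i \in S_C} D_i = D_j \cap C'$ near $p$; since $C$ is the irreducible component of $\medcap_{i \in S_C} D_i$ through $p$ and $D_j \cap C'$ is smooth by the arrangement hypothesis, this smoothness rules out any additional local components at $p$ and pins the intersection germ down to $C$. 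A second application of Lemma \ref{lem:tangent-space-meet} then gives $T_p(D_j) \cap T_p(C') = T_p(C)$.

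Finally, the dimension formula for subspaces yields
$$ \dim\bigl(T_p(D_j) + T_p(C')\bigr) = (\dim X - 1) + \dim C' - \dim C. $$
Since this quantity is bounded above by $\dim X$ and $\dim C' > \dim C$, we are forced into $\dim C' = \dim C + 1$ and the equality $T_p(D_j) + T_p(C') = T_p(X)$, establishing splayedness of $S_C$ at $p$; as $p \in C$ was arbitrary, $S_C$ is splayed along $C$.

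I expect the main point requiring care is the local identification $D_j \cap C' = C$: this is where the arrangement hypothesis really earns its keep, since smoothness of every intersection is exactly what forbids spurious components from slipping through $p$ and polluting the tangent-space intersection. Once that identification is in hand, the remainder is a one-line dimension count.
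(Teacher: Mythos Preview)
Your proof is correct and follows the same approach as the paper: both use the partition $(\{D_j\}, S_C \setminus \{D_j\})$ coming directly from the near-pencil index $j$. The paper's proof is a one-sentence assertion that this partition is a splaying, whereas you supply the dimension count that actually verifies it; your argument is a fleshed-out version of what the paper leaves implicit.
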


\begin{proof}  
  Recall that an intersection component $C$ of $\medcap I$ is near-pencil
  if there is $D_j \in I$ such that the component of $\medcap_{D \in I, D \ne D_j} D$
  containing $C$ has dimension larger than $\dim C$; this implies that $I$ is
  splayed along $C$ by $((I \setminus \{D_j\}),\{D_j\})$.
\end{proof}

The proof of Theorem~\ref{thm:main} will proceed
by a sequence of lemmas, propositions, and corollaries.
We will prove (1) and (2) in
Proposition~\ref{prop:blowup-still-splayed} and (3) immediately after.

\begin{prop}\label{prop:constant-splaying}
  Suppose that $S$ is an arrangement and that $S$ is splayed at
  $p$ by $(S_1,S_2)$.  Then $S$ is splayed by $(S_1,S_2)$ at every point of
  the component of $\medcap S$ containing $p$.
\end{prop}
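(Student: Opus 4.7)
The plan is to translate the splaying condition at a point into an equality of tangent-space dimensions involving the smooth subvarieties
$E_1 = \bigcap_{i \in S_1} D_i$ and $E_2 = \bigcap_{j \in S_2} D_j$, and then observe that every dimension appearing is constant on the component $C$ of $\bigcap S$ through $p$, so the equality propagates.

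First I would set $E_1$ and $E_2$ as above. Both are smooth by the arrangement hypothesis, as is $E_1 \cap E_2 = \bigcap S$. By Lemma~\ref{lem:tangent-space-meet}, for every $q \in C$ one has
$T_q(E_k) = \bigcap_{i \in S_k} T_q(D_i)$ for $k = 1,2$, and $T_q(E_1) \cap T_q(E_2) = T_q\!\left(\bigcap S\right)$. Using the standard dimension formula $\dim(U+V) = \dim U + \dim V - \dim(U \cap V)$, the splaying condition at $q$ becomes
\[
\dim T_q(E_1) + \dim T_q(E_2) - \dim T_q\!\left(\bigcap S\right) = \dim X.
\]

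Second, I would show that each term on the left is constant on $C$. Since the arrangement hypothesis forces connected and irreducible components of the intersection schemes $E_1$, $E_2$, and $\bigcap S$ to coincide (as noted just after Definition~\ref{def:arrangement-2}), there is a unique irreducible component $E_k'$ of $E_k$ meeting $C$, and $C$ itself is the unique component of $\bigcap S$ through any of its points. Smoothness of $E_1$, $E_2$, and $\bigcap S$ then gives $\dim T_q(E_k) = \dim E_k'$ and $\dim T_q(\bigcap S) = \dim C$ for all $q \in C$. Since the hypothesis says the displayed equation holds at $p$ and the left side is constant on $C$, it holds at every $q \in C$. Reading the equation backwards, $T_q(E_1) + T_q(E_2) = T_q(X)$, which by Lemma~\ref{lem:tangent-space-meet} is exactly splaying by $(S_1, S_2)$ at $q$.

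I do not anticipate a real obstacle: the whole argument is dimension bookkeeping. The one point that must be handled cleanly is the possibility that $E_1$ or $E_2$ (as schemes, not as components) could behave pathologically at $q$; this is ruled out by the arrangement hypothesis via the remark that in an arrangement every intersection is smooth and hence locally irreducible, so only one component of each $E_k$ is relevant near any point of $C$.
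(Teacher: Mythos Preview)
Your proof is correct and follows essentially the same approach as the paper: both arguments convert the splaying condition into the dimension identity $\dim T_q(E_1)+\dim T_q(E_2)-\dim T_q(\bigcap S)=\dim X$ via the formula $\dim(V+W)=\dim V+\dim W-\dim(V\cap W)$, then use smoothness of the relevant intersections to conclude that each term is constant along the component $C$. Your version is a bit more explicit---you invoke Lemma~\ref{lem:tangent-space-meet} and spell out the component issue---but the substance is identical.
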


\begin{proof} Points on the same component of $\medcap S$ are also on the same
  component of $\medcap_{i \in S_{1,p}} D_i$.  Since the intersection is smooth,
  the dimension of $\medcap_{i \in S_{1,p}} T(D_i)$ is constant on each component, and
  likewise for $\medcap_{j \in S_{2,p}} T(D_j)$ and $\medcap_{k \in S} T(D_k)$.
  Applying the equality $\dim (V+W) = \dim V + \dim W - \dim V \cap W$
  completes the proof.
\end{proof}

Thus we will sometimes abuse language by saying that $S$ is splayed by
$(S_1,S_2)$ on the component of $\medcap S$, rather than at the point $p$.

\begin{lemma}\label{lem:subset-splayed}
  Let $T \subset U \subseteq S$ be subsets of an arrangement on a variety $X$,
  let $C$ be
  a component of $\medcap U$, let $p$ be a point of $C$, and let $U = (U_1,U_2)$
  be a splaying at $p$.  Suppose that $T \cap U_i$ is nonempty for $i = 1, 2$.
  Then $T$ is splayed at $p$ by $(T \cap U_1, T \cap U_2)$.
\end{lemma}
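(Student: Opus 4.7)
The plan is to unpack the splaying hypothesis and use the elementary fact that intersecting fewer subspaces produces a larger subspace. First I would check that the point $p$ actually lies in $\medcap T$, which is immediate: since $T \subseteq U$, we have $\medcap T \supseteq \medcap U \supseteq C \ni p$. So it makes sense to ask whether $T$ is splayed at $p$.

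Next I would observe the key containment on tangent spaces: because $T \cap U_i \subseteq U_i$ for $i = 1, 2$, we have
$$\medcap_{D \in U_i} T_p(D) \; \subseteq \; \medcap_{D \in T \cap U_i} T_p(D).$$
Adding these two inclusions and invoking the splaying of $U$ at $p$ gives the chain
$$T_p(X) \;=\; \medcap_{D \in U_1} T_p(D) + \medcap_{D \in U_2} T_p(D) \;\subseteq\; \medcap_{D \in T \cap U_1} T_p(D) + \medcap_{D \in T \cap U_2} T_p(D) \;\subseteq\; T_p(X),$$
forcing equality throughout. Combined with the hypothesis that $T \cap U_1$ and $T \cap U_2$ are both nonempty, this is exactly the definition of $T$ being splayed at $p$ by $(T \cap U_1, T \cap U_2)$.

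There is really no obstacle here; the lemma is a routine monotonicity statement. The only place one has to be careful is the nonemptiness hypothesis, which cannot be dropped: a splaying of $U$ might be concentrated entirely on one side of $T$, in which case $T$ itself need not be splayable. This is precisely why the nonemptiness of both $T \cap U_i$ is assumed as part of the statement, and it is what makes the conclusion a legitimate splaying rather than a trivial one.
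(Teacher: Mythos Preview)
Your argument is correct and essentially identical to the paper's: both note that $\medcap_{D \in U_i} T_p(D) \subseteq \medcap_{D \in T\cap U_i} T_p(D)$, add the two inclusions to get $T_p(X)$ on the left, and then invoke the nonemptiness of $T\cap U_1$ and $T\cap U_2$ to conclude that this is a genuine splaying. The only addition is your explicit check that $p\in\medcap T$, which the paper leaves implicit.
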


\begin{proof}
  Let $R_i = \medcap_{D \in (T \cap U_i)} T_p(D)$ and let
  $S_i = \medcap_{D \in U_i} T_p(D)$ for $i = 1, 2$.
  The hypothesis that $U$ is splayed at $p$ by $(U_1,U_2)$ states that
  $S_1 + S_2 = T_p(X)$.  It follows that $R_1 + R_2 = T_p(X)$, since
  $R_i \supseteq S_i$.  Since $T \cap U_i$ is nonempty for $i = 1, 2$, that
  means that $(T \cap U_1,T \cap U_2)$ is a splaying.
\end{proof}

\begin{prop}\label{prop:one-sided}
  Suppose that $S$ is an arrangement of divisors on $X$
  and $T$ is a subset of $S$ whose
  intersection has a connected component $C$ not contained in any elements
  of $S$ not in $T$ and minimal among non-splayed components of intersections
  of subsets of $S$.  Let $U$ properly contain $T$.  Then
  $U$ is splayed along $C \cap \medcap U$
  as $(T,U \setminus T)$.
\end{prop}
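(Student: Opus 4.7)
My plan is to proceed by induction on $|U \setminus T|$, combining the minimality hypothesis---which supplies a splaying of $U$ at each relevant point---with the non-splayedness of $T$ to force that splaying into the canonical form $(T, U \setminus T)$. Fix an arbitrary $p \in C \cap \medcap U$; it suffices to verify the splaying at $p$.

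For the base case $|U \setminus T| = 1$, write $U = T \cup \{D\}$. The hypothesis $C \not\subseteq D$ forces $D \cap C$ to be a proper subvariety of the irreducible $C$, so smoothness of $D \cap C$ (from the arrangement property) gives $\dim T_p(D \cap C) < \dim C$. By Lemma~\ref{lem:tangent-space-meet} this tangent space equals $T_p(D) \cap T_p(C)$, so $T_p(C) \not\subseteq T_p(D)$; since $T_p(D)$ is a hyperplane this immediately yields $T_p(C) + T_p(D) = T_p(X)$.

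For the inductive step, minimality of $C$ ensures that the component of $\medcap U$ through $p$ is splayed, so $U$ admits some splaying $(U_1, U_2)$ at $p$. Because $T$ is non-splayed at $p$, Lemma~\ref{lem:subset-splayed} forces $T$ to lie entirely in $U_1$ or $U_2$; relabel so that $T \subseteq U_2$ and set $U_2' = U_2 \setminus T$. If $U_2' = \emptyset$ the given splaying is already $(U \setminus T, T)$. Otherwise $|U_2 \setminus T| < |U \setminus T|$, so the inductive hypothesis applied to $U_2$ yields $V + B = T_p(X)$, where I write $V := T_p(C) = \medcap_{D \in T} T_p(D)$, $A := \medcap_{D \in U_1} T_p(D)$, and $B := \medcap_{D \in U_2'} T_p(D)$.

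To close the induction, the splaying $(U_1, U_2)$ gives $A + V \cap B = T_p(X)$ (since $\medcap_{D \in U_2} T_p(D) = V \cap B$), and hence $A + B = T_p(X)$ because $V \cap B \subseteq B$. A short dimension count using these three identities computes $\dim(V + A \cap B) = \dim T_p(X)$; since $A \cap B = \medcap_{D \in U \setminus T} T_p(D)$, this is the desired splaying of $U$ by $(T, U \setminus T)$. The main obstacle is conceptual rather than computational: one must recognise that induction should be applied to $U_2$, the side of the splaying containing $T$, rather than to some smaller subset of $U \setminus T$; once that choice is made, the splaying datum from minimality supplies exactly the bridge between the inductive hypothesis on $U_2$ and the conclusion for $U$.
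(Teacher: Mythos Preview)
Your proof is correct, but the inductive step is organized differently from the paper's. Both arguments start the same way: induct on $|U\setminus T|$, and use Lemma~\ref{lem:subset-splayed} together with the non-splayedness of $T$ to force $T$ to sit inside one half of any splaying of $U$. After that the two proofs diverge.

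In the paper, having found a splaying $(T',U\setminus T')$ with $T\subseteq T'$, one fixes a single $G\in U\setminus T'$ and applies the inductive hypothesis to $U\setminus\{G\}$, reducing $|U\setminus T|$ by exactly one; the conclusion is then obtained by an explicit element-chasing decomposition of an arbitrary $v\in T_p(X)$. You instead apply induction to $U_2$, the half of the splaying containing $T$, which drops $|U\setminus T|$ by $|U_1|\ge 1$ in one step, and then finish with a clean dimension count using the three identities $V+B=T_p(X)$, $A+(V\cap B)=T_p(X)$, $A+B=T_p(X)$. Your base case is also a bit more self-contained: you argue directly from $C\not\subseteq D$ and the hyperplane condition, whereas the paper invokes minimality to obtain a splaying of $U$ and then notes that the only possible shape is $(T,\{D\})$. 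Either route works; yours is slightly more economical in the induction and trades the paper's constructive decomposition for linear-algebra bookkeeping.
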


\begin{remark}\label{remark:why-so-hard}
  This somewhat baroque wording is necessary for the result to apply in
  cases where
  the intersection of $T$ has two components, one where it is splayed and one
  where it is not, and we have an intersection of the component where it is
  splayed with certain divisors in a bad configuration.  For example, if
  we blow up $x = y = 0$ in $\P^3$, the intersection of the strict transforms of
  $z = 0$ and $y+z = 0$ is not smooth.  Now $\{x=0,y=0\}$ is splayed, but
  it could be that one component of intersection is locally
  like this, but there is another component of the intersection disjoint from
  that one and minimal non-splayed.
\end{remark}

\begin{proof} If $C \cap \medcap U$ is empty, the statement holds vacuously.
  Otherwise, let $p \in C \cap \medcap U$,
  and let $U = U_1 \cap U_2$ be a splaying at $p$, which exists by the
  minimality of $C$.  Either $U_1$ or $U_2$ must be disjoint from $T$,
  since otherwise Lemma \ref{lem:subset-splayed} would show that $T$ is
  splayed at $p$ and hence along $C$ by $(U_1 \cap T, U_2 \cap T)$.  Therefore
  the one of $U_1, U_2$ that is not disjoint from $T$ contains $T$.

  We proceed by induction on $\#(U \setminus T)$.  In the base case
  $\#(U \setminus T) = 1$, the statement holds, because we know that $U$ is
  splayed and we have just
  shown that one subset in every splaying must contain $T$.

  Now let $(T',U \setminus T')$ be a splaying with $T \subset T'$; fix
  $G \in U \setminus T'$.  Our inductive hypothesis states that
  $U \setminus \{G\}$ is splayed by $(T, U \setminus (T\cup \{G\}))$.
  Let $v \in T_p(X)$: the inductive hypothesis allows us to write $v = t + u$,
  where $t \in \medcap_{D \in T} T_p(D)$
  and $u \in \medcap_{E \in U \setminus (T \cup \{G\})} T_p(E)$.
  Then write $u = q + r$, where
  $q \in \medcap_{D \in T'} T_p(D)$ and $r \in \medcap_{E \in U \setminus T'} T_p(E)$:
  again, we can do this because $(T',U \setminus T')$ is a splaying.
  Of course $v = (t+q) + (u-q)$.  Now $t, q \in \medcap_{D \in T} T_p(D)$, so
  the same holds for $t+q$.  Likewise, $u-q \in \medcap_{D \in T' \setminus T} T_p(D)$
  and $r \in \medcap_{E \in U \setminus T'} T_p(E)$; since these two are equal, they
  are in $\medcap_{D \in U \setminus T} T_p(D)$.  Accordingly every element of the
  tangent space $T_p(X)$ can be written as the sum of an element of
  $\medcap_{D \in T} T_p(D)$ and of $\medcap_{E \in U \setminus T} T_p(E)$.
  This shows that $U$ is splayed by $(T,U \setminus T)$ and completes the
  induction.
\end{proof}

We now return to our problem.

\begin{notation}
  Let $T$ and $Q$ be arbitrary subsets of $S$.  We reorder the elements of
  $S$ so that $T = \{D_1, \dots, D_j\}$ and $Q = \{D_i, \dots, D_k\}$.
  Let $C$ be a connected component of the intersection $\medcap_{n=1}^j D_n$ that
  satisfies the
  same hypotheses as in Proposition \ref{prop:constant-splaying}.  Then we take
  $R$ a connected component of $\medcap Q$.  (As before,
  the connected and irreducible components of the intersection are the same.)
  Let $\pi: \tilde X \to X$ be the blowup along $C$,
  with exceptional divisor $E$,
  and let the $\tilde D$ be the strict transforms.  If $k \le j$, so
  that $C \subseteq R$, then 
  $\pi^{-1}(R) \cap \medcap_{m=i}^k D_i$ is the blowup of $R$
  along $C$ and is smooth (empty, if $i = 1, k = j$).
  If $C \cap R$ is empty,
  then the map $\pi^{-1}(R) \to R$ is an isomorphism, so the
  domain is smooth.  Finally, if $R \subseteq C$, then there is no
  component of $\medcap_{m=i}^k \tilde D_m$ lying above $R$, so there is nothing
  to do.  Since these cases have been dealt with, we assume
  henceforth that $i \le j < k$.

  Thus we take a point $p \in C \cap R$ and a
  point $q$ lying above it in $\tilde X$.  Let $F$ be the fibre above $p$ in
  $\tilde X$.  Finally, let $\tilde T = \medcap_{m=1}^j \tilde D_m$, let
  $\tilde Q = \{\tilde D_i,\dots,\tilde D_k\}$, and
  $\tilde R = \pi^{-1}(R) \cap \medcap_{m=i}^k Q$.  Let $M$ be the
  component of $\tilde R$
  that maps dominantly to $R$ (it is unique because the blowup
  is an isomorphism away from $C$ and $R \not \subseteq C$);
  we refer to $M$ as the {\em main component}.
  We will show in Corollary \ref{cor:irred} that there are no others.
\end{notation}

Since $R$ does not contain $C$, the set $T \cup Q$ is splayed at $p$,
and by Proposition \ref{prop:one-sided}, it is splayed by $(T,Q \setminus T)$.
We use this to control the intersection of the tangent spaces of the
elements of $\tilde Q$.

In particular, let $V_1 = \medcap_{D \in T} T_p(D)$ and
let $V_2 = \medcap_{E \in Q \setminus T} T_p(E)$.
Clearly $V_1 \subseteq T_p(C)$ and
$V_2$ contains a complement to $V_1$ in $T_p(X)$.

\begin{lemma}\label{lemma:tangent-fibre}
  Let $C,C' \subset X$ be smooth subvarieties with smooth intersection
  and let $p \in C \cap C'$.
  Let $\tilde X \to X$ be the blowup along $C$ with exceptional divisor $E$
  and let $\tilde C'$ be the
  strict transform.  Then the fibre above $p$ in $\tilde C'$ has dimension
  $\dim C' - \dim (C \cap C') - 1$.
\end{lemma}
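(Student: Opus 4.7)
The plan is to reduce the claim to an explicit local computation. First I would exploit the hypotheses to choose convenient coordinates: since $X$, $C$, $C'$, and $C \cap C'$ are all smooth at $p$, the smoothness of $C \cap C'$ forces $\dim T_p(C \cap C') = \dim(C \cap C')$, and together with $T_p(C \cap C') = T_p C \cap T_p C'$ (Lemma~\ref{lem:tangent-space-meet}) this is the ``clean intersection'' property. It lets me find \'etale local coordinates $x_1, \dots, x_c, y_1, \dots, y_d, z_1, \dots, z_e$ on $X$ near $p$ such that $C = V(x_1, \dots, x_c)$ and $C' = V(y_1, \dots, y_d)$, with $c = \codim_X C$, $d = \codim_X C'$, and $e = \dim(C \cap C')$. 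Note that $\dim C' = c + e$, so the target becomes to prove the fibre has dimension $c-1$.

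Second, I would compute $\tilde C'$ directly in the standard affine chart $U_i$ of the blowup, whose coordinates are $x_i$, $x_j/x_i$ for $j \neq i$, the $y_k$, and the $z_l$. The total transform of $C'$ in $U_i$ is cut out by $y_1, \dots, y_d$, and since none of these generators lies in the ideal $(x_i)$ of $E \cap U_i$, this already coincides with the strict transform $\tilde C' \cap U_i$.

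Third, I would read off the fibre above $p$. In $U_i$ the preimage of $p$ is $V(x_i, y_1, \dots, y_d, z_1, \dots, z_e)$; intersecting with $\tilde C' \cap U_i = V(y_1, \dots, y_d)$ leaves $V(x_i, z_1, \dots, z_e)$ inside $\tilde C' \cap U_i$, which is an $\A^{c-1}$ with free coordinates $x_j/x_i$ for $j \neq i$. Gluing the charts $U_1, \dots, U_c$ produces a $\P^{c-1}$, so the fibre has dimension $c - 1 = \dim C' - \dim(C \cap C') - 1$.

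The only delicate step is the first one, where linearizing $C$ and $C'$ simultaneously by a single \'etale coordinate change depends on the smoothness of the intersection; the remaining steps are routine affine computations. One could alternatively phrase the argument more conceptually by observing that under these hypotheses $\tilde C'$ is isomorphic to the blowup of $C'$ along $C \cap C'$, whose exceptional fibre above $p$ is $\P(N_{C \cap C'/C', p})$, of dimension $\codim_{C'}(C \cap C') - 1$.
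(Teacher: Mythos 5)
Your first step contains a genuine gap: the coordinate system you posit does not exist under the stated hypotheses. Writing $C = V(x_1,\dots,x_c)$ and $C' = V(y_1,\dots,y_d)$ inside a single system $x_1,\dots,x_c,y_1,\dots,y_d,z_1,\dots,z_e$ forces $\dim X = c+d+e$, i.e.\ $T_pC + T_pC' = T_pX$. Clean intersection only gives $e = \dim(T_pC\cap T_pC') \ge \dim X - c - d$, with equality exactly when the intersection is transverse; the lemma assumes no transversality, and it is invoked in the paper precisely in non-transverse situations (Corollary \ref{cor:fibre-divisor} uses it when $C \subseteq C'$, and Lemma \ref{lemma:ker-tangent-space} applies it to divisors all containing $C$). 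A concrete failure: $C = V(u_1,u_2)$, $C' = V(u_1,u_3)$ in $\A^4$, where $c=d=2$, $e=1$, and $c+d+e = 5 > 4$. What clean intersection actually yields is a coordinate system in which $C$ and $C'$ are both coordinate subspaces whose defining sets of variables may overlap, $C = V(x_i : i \in A)$, $C' = V(x_j : j \in B)$ with $A\cap B$ possibly nonempty; your chart computation would then have to account for the variables in $A\cap B$, which contribute extra exceptional directions to $\tilde C'$. As written, your computation proves only the transverse case.

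Happily, the ``alternative'' in your last sentence is not a rephrasing but the correct general argument, and you should promote it to the main proof: the strict transform of $C'$ under $\mathrm{Bl}_C X \to X$ is canonically the blowup of $C'$ along the inverse image ideal sheaf of $C$, i.e.\ along the scheme-theoretic intersection $C\cap C'$, which is smooth by hypothesis; hence the fibre over $p$ is $\P\bigl(N_{(C\cap C')/C',\,p}\bigr)$, of dimension $\codim_{C'}(C\cap C')-1$ (empty if $C'\subseteq C$, a single point if $C\cap C'$ is a divisor in $C'$). The paper takes a third, even shorter route: when $C'\not\subseteq C$ one has $\dim\tilde C' = \dim C'$, and $\tilde C'\cap E$ is a divisor in $\tilde C'$ mapping onto $C\cap C'$ with equidimensional fibres, so the fibre dimension is $(\dim C' - 1) - \dim(C\cap C')$. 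Either of these arguments closes the gap.
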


\begin{proof} This is clear if $C' \subseteq C$, since $\tilde C'$ is then
  empty.  Otherwise $\dim \tilde C' = \dim C'$ and $\tilde C' \cap E$ is a
  divisor in $\tilde C'$
  that maps to $C \cap C'$ with fibres of constant dimension.  The result
  follows from this.
\end{proof}

\begin{cor}\label{cor:fibre-divisor}
  If $C'$ is a divisor in $X$, then either $C \subseteq C'$ and
$\tilde C'$ meets each fibre of the blowup $\tilde X \to X$
in a divisor, or $C \not \subseteq C'$ and
$\tilde C'$ contains the fibres at points of $C \cap C'$.
\end{cor}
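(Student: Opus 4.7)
The plan is to deduce both alternatives from Lemma~\ref{lemma:tangent-fibre} by computing, for $p \in C$, the dimension of $\tilde{C'} \cap F$, where $F = \pi^{-1}(p) \cong \P^{c-1}$ is the fibre of $\pi : \tilde{X} \to X$ above $p$ and $c = \codim_X C$. Writing $n = \dim X$, the intersection $\tilde{C'} \cap F$ is exactly the fibre over $p$ of the map $\tilde{C'} \to C'$, so Lemma~\ref{lemma:tangent-fibre} computes its dimension as soon as one knows $\dim(C \cap C')$.

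In the first alternative, where $C \subseteq C'$, one has $C \cap C' = C$, which is already smooth, and the lemma yields $\dim(\tilde{C'} \cap F) = \dim C' - \dim C - 1 = (n-1) - (n-c) - 1 = c-2$, exactly one less than $\dim F$; hence $\tilde{C'} \cap F$ is a divisor in $F$. In the second alternative, with $C \not\subseteq C'$ and $p \in C \cap C'$, the key preliminary step is to verify that the smoothness of $C \cap C'$ inherited from the hypotheses of Lemma~\ref{lemma:tangent-fibre} forces $C \cap C'$ to have codimension one in $C$ at $p$. This follows because, locally near $p$, the Cartier divisor $C'$ has a single defining equation $f$ whose restriction $f|_C$ does not vanish identically (as $C \not\subseteq C'$), and smoothness of $\{f|_C = 0\}$ at $p$ then forces $df|_C(p) \neq 0$, yielding $\dim (C \cap C') = n - c - 1$. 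Lemma~\ref{lemma:tangent-fibre} now gives $\dim(\tilde{C'} \cap F) = (n-1) - (n-c-1) - 1 = c - 1 = \dim F$, and since $F$ is irreducible while $\tilde{C'} \cap F$ is a closed subset of $F$ of the same dimension, one concludes $\tilde{C'} \cap F = F$, i.e.\ $F \subseteq \tilde{C'}$.

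The only obstacle of any substance is the expected-dimension computation in the second case: without it the final equality $\dim(\tilde{C'} \cap F) = \dim F$ would fail, and the inclusion $F \subseteq \tilde{C'}$ could not be deduced from dimensions alone. Everything else is dimensional bookkeeping that Lemma~\ref{lemma:tangent-fibre} handles transparently.
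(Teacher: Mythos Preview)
Your argument is correct and is exactly the approach the paper intends: the corollary is stated immediately after Lemma~\ref{lemma:tangent-fibre} with no separate proof, so it is meant to follow by plugging in the two possible values of $\dim(C\cap C')$. Your explicit verification that $C\not\subseteq C'$ together with smoothness of $C\cap C'$ forces $\codim_C(C\cap C')=1$ (via the Jacobian criterion applied to $f|_C$) is precisely the missing bookkeeping step, and the rest is the dimension count you give.
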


\begin{notation}
  Let $c = \dim C$.  Let $g$ be the
  dimension of the component of $\medcap_{n=i}^j D_n$ containing $C$ and let
  $h = \dim \left((\medcap_{n=j+1}^k) D_n \cap C \right)$.
\end{notation}

\begin{lemma}\label{lemma:ker-tangent-space}
  $\dim (\ker T_q({\tilde R}) \stackrel{f}{\to} T_p(R)) \le g-c-1$,
  with equality if $q \in M$.
\end{lemma}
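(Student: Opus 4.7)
The kernel equals $T_q(\tilde R)\cap T_q(F)$, since Proposition~\ref{prop:tangent-map} identifies $\ker(df\colon T_q(\tilde X)\to T_p(X))$ with $T_q(F)$. My plan is to decompose $\tilde R$ scheme-theoretically as $\pi^{-1}(R)\cap\bigcap_{m\in Q}\tilde D_m$, apply Lemma~\ref{lem:tangent-space-meet} to commute intersection with the tangent functor, and then translate each factor across the isomorphism $r\colon T_q(F)\xrightarrow{\sim} N_p(C/X)/\langle v_q\rangle$ of Proposition~\ref{prop:tangent-map}, reducing the dimension count to linear algebra in $N_p(C/X)$.

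For $m\in Q\setminus T$, the one-sidedness of Proposition~\ref{prop:one-sided} gives $C\not\subseteq D_m$, so Corollary~\ref{cor:fibre-divisor} forces $F\subseteq\tilde D_m$ near $q$ and hence $T_q(F)\subseteq T_q(\tilde D_m)$: these factors are vacuous. For $m\in T\cap Q$, Corollary~\ref{cor:fibral-dirs} identifies $T_q(F)\cap T_q(\tilde D_m)$ with the subspace $N_p(C/D_m)/\langle v_q\rangle$ of $N_p(C/X)/\langle v_q\rangle$. Replaying the proof of that corollary with $R$ in place of a divisor---using that $C\not\subseteq R$, so that $\pi^{-1}(R)$ is the smooth blowup of $R$ along $C\cap R$---identifies $T_q(F)\cap T_q(\pi^{-1}(R))$ with the image of $T_p(R)$ in $N_p(C/X)/\langle v_q\rangle$, namely $\bigl((T_p(R)+T_p(C))/T_p(C)\bigr)/\langle v_q\rangle$.

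The decisive input is a modular-law computation driven by splayedness: by Proposition~\ref{prop:one-sided}, $T\cup Q$ is splayed at $p$ by $(T,Q\setminus T)$, so $T_p(C)+T_p(B)=T_p(X)$ with $B=\bigcap_{m\in Q\setminus T}D_m$. Since $R$ is a component of $A\cap B$, where $A$ denotes the component of $\bigcap_{m\in T\cap Q}D_m$ containing $C$, Lemma~\ref{lem:tangent-space-meet} gives $T_p(R)=T_p(A)\cap T_p(B)$; with $T_p(C)\subseteq T_p(A)$ the modular law yields $T_p(R)+T_p(C)=T_p(A)\cap(T_p(B)+T_p(C))=T_p(A)$. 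Thus the image of $T_p(R)$ in $N_p(C/X)$ coincides with $T_p(A)/T_p(C)=\bigcap_{m\in T\cap Q}N_p(C/D_m)=N_p(C/A)$, of dimension $g-c$. The intersection of all the factors therefore collapses to $N_p(C/A)/\langle v_q\rangle$, of dimension $g-c-1$ whenever $v_q$ has nonzero image in $N_p(C/A)$---which is automatic for $q\in M$, since $q\in M$ forces $n_q\in N_p(C/A)$---and of dimension at most $g-c-1$ in general.

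The main obstacle I expect is adapting Corollary~\ref{cor:fibral-dirs} from a divisor to the higher-codimension subvariety $\pi^{-1}(R)$; this should go through by repeating the proof with $R$ playing the role of $D$ and noting that $N_p((C\cap R)/R)$ embeds in $N_p(C/X)$ as $(T_p(R)+T_p(C))/T_p(C)$. A secondary concern is that the ``$-1$'' in the dimension depends on $v_q\in N_p(C/A)$, which is precisely the membership condition characterizing the main component $M$ and accounts for the possibility of strict inequality at spurious components of $\tilde R$ whose points have $n_q$ outside $N_p(C/A)$.
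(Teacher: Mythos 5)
Your argument is correct and matches the paper's proof in its essentials: both identify the kernel with $T_q(F)\cap T_q(\tilde R)$, discard the factors coming from $Q\setminus T$ via Corollary~\ref{cor:fibre-divisor}, and compute the remaining intersection as $N_p(C/A)/\langle v_q\rangle$ of dimension $g-c-1$ via Corollary~\ref{cor:fibral-dirs}; your modular-law treatment of the $\pi^{-1}(R)$ factor only makes explicit a step the paper leaves implicit. One small correction: your closing remark about points of $\tilde R$ with $n_q$ outside $N_p(C/A)$ describes an empty set, since any $q\in\tilde R\cap F$ lies in $\bigcap_{m\in T\cap Q}\tilde D_m\cap F=\P(N_p(C/A))$, so $n_q\in N_p(C/A)\setminus\{0\}$ automatically and the bound $\le g-c-1$ holds for all such $q$ without your hedging.
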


\begin{proof} The kernel is the intersection with $T_q(F)$ by
  Lemma \ref{lem:tangent-space-meet}.
  For $n > j$ we have
  $T_q(\tilde D_n) \supseteq T_q(F)$ by Corollary \ref{cor:fibre-divisor},
  so these may be discarded from the
  intersection.  The result now follows from Corollary \ref{cor:fibral-dirs}:
  the fibral tangent directions in $\medcap_{n=i}^j \tilde D_n$ at $q$ correspond to
  the normal directions to $C$ in $f(M)$ at $p$ mod the $1$-dimensional
  subspace giving $q$.  If $q \in M$, we
  obtain all the tangent directions in $\medcap_{n=i}^j T(\tilde D_n)$, which has
  dimension $g$; otherwise we have a subspace of this space.  In either
  case we take the quotient by the intersection with the
  space spanned by the
  $c$-dimensional
  tangent space to $C$ (which is contained in the space for $q \in M$)
  and the direction of $q$, giving the result claimed.
\end{proof}

\begin{lemma}\label{lemma:im-tangent-space}
  $\dim (\im T_q({\tilde R}) \stackrel{f}{\to} T_p(R)) \le h+1$, with equality if $q \in M$.
\end{lemma}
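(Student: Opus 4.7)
The plan is to prove the upper bound by intersecting the image globally inside $T_p(R)$, and then to deduce equality on $M$ by recognizing $M$ as the strict transform of $R$ and reapplying Proposition~\ref{prop:tangent-map} one dimension lower.

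For the upper bound, the image $\im(df: T_q(\tilde R) \to T_p(R))$ is contained in $T_p(R) \cap \im(df: T_q(\tilde X) \to T_p(X)) = T_p(R) \cap (T_p(C) + \langle v_q\rangle)$ by Proposition~\ref{prop:tangent-map}. The central step will be to show $v_q \in T_p(R) + T_p(C)$. The containment $q \in \tilde D_m$ translates into $v_q \in T_p(D_m) + T_p(C)$ for each $m \in Q$: for $m \in Q \cap T$ this is automatic since $T_p(D_m) \supseteq T_p(C)$, while for $m \in Q \setminus T$ one uses the description $\tilde D_m \cap F = \P((T_p(D_m) + T_p(C))/T_p(C))$. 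By Proposition~\ref{prop:one-sided}, $T \cup Q$ is splayed at $p$ by $(T, Q \setminus T)$, so $T_p(X) = T_p(C) + \bigcap_{m \in Q \setminus T} T_p(D_m)$, which forces $T_p(D_m) + T_p(C) = T_p(X)$ for every $m \in Q \setminus T$. Thus $\bigcap_{m \in Q}(T_p(D_m) + T_p(C))$ collapses to $\bigcap_{m \in Q \cap T} T_p(D_m)$, and a short modular-law argument identifies this space with $T_p(R) + T_p(C)$. Writing $v_q = w + u$ with $w \in T_p(R)$, $u \in T_p(C)$, and noting $w \notin T_p(C)$ (else $v_q \in T_p(C)$, contradicting the fact that $v_q$ represents a nonzero normal direction), a direct computation gives
\[ T_p(R) \cap (T_p(C) + \langle v_q \rangle) = T_p(R \cap C) + \langle w\rangle, \]
of dimension $h + 1$, establishing the bound.

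For equality on $M$, the plan is to identify $M$ with the strict transform of $R$: the closure of $\pi^{-1}(R \setminus C)$ is an irreducible subvariety of $\tilde R$ of dimension $\dim R$ mapping dominantly to $R$, so it coincides with $M$. Since $R$ and the component of $R \cap C$ containing $p$ are both smooth and $R \not\subseteq C$, this $M$ is the blowup of $R$ along $R \cap C$. Applying Proposition~\ref{prop:tangent-map} with $(X, C)$ replaced by $(R, R \cap C)$ then yields $\im(df: T_q(M) \to T_p(R)) = T_p(R \cap C) + \langle v_q'\rangle$, where $v_q'$ lifts the direction of $q$ in $N_{R \cap C/R}(p)$; this image has dimension exactly $h + 1$. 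Since $T_q(M) \subseteq T_q(\tilde R)$, the image via $\tilde R$ has dimension at least $h + 1$, forcing equality. The main obstacle will be the first step: carefully translating the set-theoretic condition $q \in \tilde D_m$ into a linear-algebraic condition on $v_q$ and assembling these via the splaying to pin down $v_q \in T_p(R) + T_p(C)$. The equality half, by contrast, is essentially formal once $M$ has been recognized as a blowup.
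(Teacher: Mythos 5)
Your proof is correct, but it takes a genuinely different route from the paper's. The paper filters $T_q(\tilde R)$ through $T_q(E)$: it bounds $df\bigl(T_q(\tilde R)\cap T_q(E)\bigr)$ inside $T_p(R\cap C)$ (dimension $h$) using Corollary~\ref{cor:fibre-divisor}, and then obtains the extra dimension, and its sharpness on $M$, by lifting a curve in $\medcap_{n=j+1}^k D_n$ through $p$ in the direction of $q$. You instead compute the ambient constraint exactly: the image lies in $T_p(R)\cap(T_p(C)+\langle v_q\rangle)$, and the observation that $q\in\medcap_{m\in Q\cap T}\tilde D_m$ forces $v_q\in\medcap_{m\in Q\cap T}T_p(D_m)=T_p(R)+T_p(C)$ (your modular-law step, valid because Proposition~\ref{prop:one-sided} gives $T_p(C)+\medcap_{m\in Q\setminus T}T_p(D_m)=T_p(X)$ with $T_p(C)\subseteq\medcap_{m\in Q\cap T}T_p(D_m)$) lets you identify the constraint space as $T_p(R\cap C)+\langle w\rangle$, of dimension exactly $h+1$. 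This is slightly sharper than the paper, which only bounds dimensions. For equality you recognize $M$ as the strict transform of $R$, hence the blowup of $R$ along $R\cap C$, and reapply Proposition~\ref{prop:tangent-map} one level down; this replaces the paper's curve-lifting argument by a more systematic one. Two small points, neither fatal: (i) your parenthetical justifications for translating $q\in\tilde D_m$ are swapped --- it is the case $m\in Q\setminus T$ that is automatic (there $\tilde D_m\supseteq F$ by Corollary~\ref{cor:fibre-divisor}, equivalently $T_p(D_m)+T_p(C)=T_p(X)$), whereas for $m\in Q\cap T$ the condition $v_q\in T_p(D_m)$ is the genuine content, coming from $\tilde D_m\cap F=\P\bigl(T_p(D_m)/T_p(C)\bigr)$; (ii) identifying the strict transform of $R$ with the blowup of $R$ along $R\cap C$ requires not just smoothness of $R$ and of $R\cap C$ but that the scheme-theoretic intersection be reduced (clean intersection), which is exactly what the arrangement hypothesis of Definition~\ref{def:arrangement-2} supplies and should be cited.
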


\begin{proof}
  We start by intersecting the domain with $T_q(E)$ and considering the map
  to $T_p(C)$.  This time Corollary \ref{cor:fibre-divisor} shows that
  it is the $n \le j$ that may be ignored, since the
  corresponding $T({\tilde D_n})$ map surjectively to $T_p(C)$.  For the others
  we have $df (T_q(\tilde D_n) \cap T_q(E)) \subseteq T({D_i \cap C})$,
  which has dimension $h$, and equality holds if $q \in M$.
  Now, consider any curve in $\medcap_{n=j+1}^k D_n$ smooth at $p$
  whose first-order tangent vector is $q$
  (this exists unless the component
  of $\medcap_{n=j+1}^k D_n$ containing $p$ is a point, in which case there is
  no point $q$ lying above $p$ in $\medcap \tilde D_n$): it lifts to the
  intersection of strict transforms to give a tangent vector that is in the
  direction of $q$ up to $T_p(C)$, so
  $\dim \im df - \dim df (T_q(\tilde D_n) \cap T_q(E)) > 0$.
  Further, any such vector together with $T_p(C)$ generates the image of
  $df$, so the difference is at most $1$.
\end{proof}

\begin{cor}\label{cor:dim-tangent-space}
  $\dim T_q({\tilde R}) = g+h-c$ for $q \in M$.
\end{cor}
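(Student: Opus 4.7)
The plan is to combine the two preceding lemmas via the rank-nullity decomposition of the differential $df: T_q(\tilde R) \to T_p(R)$. Since tangent spaces are vector spaces and $df$ is a linear map, we have the exact sequence
\[ 0 \to \ker\bigl(df|_{T_q(\tilde R)}\bigr) \to T_q(\tilde R) \to \im\bigl(df|_{T_q(\tilde R)}\bigr) \to 0, \]
so $\dim T_q(\tilde R) = \dim \ker df + \dim \im df$.

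The first step is to invoke Lemma~\ref{lemma:ker-tangent-space}, which tells us that $\dim \ker(df: T_q(\tilde R) \to T_p(R)) \le g-c-1$, with equality when $q \in M$. Since we are assuming $q \in M$, we get $\dim \ker df = g - c - 1$. The second step is to invoke Lemma~\ref{lemma:im-tangent-space}, which gives $\dim \im df \le h+1$ with equality for $q \in M$, hence $\dim \im df = h+1$.

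Adding the two contributions yields
\[ \dim T_q(\tilde R) = (g - c - 1) + (h + 1) = g + h - c, \]
which is the claimed equality.

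I expect no substantive obstacle here; the corollary is essentially a bookkeeping statement that packages the two previous lemmas. The only thing to double-check is that the exact sequence $0 \to \ker \to T_q(\tilde R) \to \im \to 0$ does indeed give additivity of dimensions, which is immediate since all terms are finite-dimensional $k$-vector spaces and the maps are $k$-linear. All of the real geometric content — understanding the fibral directions via Corollary~\ref{cor:fibral-dirs} and the image via the behaviour of curves through $q$ — has already been absorbed into the proofs of Lemmas~\ref{lemma:ker-tangent-space} and~\ref{lemma:im-tangent-space}.
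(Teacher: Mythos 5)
Your proof is correct and is exactly the argument the paper intends: the corollary is stated without a separate proof precisely because it follows from Lemmas~\ref{lemma:ker-tangent-space} and~\ref{lemma:im-tangent-space} by rank--nullity for $df\colon T_q(\tilde R)\to T_p(R)$, giving $(g-c-1)+(h+1)=g+h-c$. No gaps.
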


\begin{remark}\label{rem:easy-equality}
  In fact, equality in the last two lemmas for $q \in M$
  follows from a simpler argument.  The lemmas show that
  $\dim T_q(\tilde R) \le \dim M$.  But $M \subseteq \tilde R$ implies
  that $\dim M \le \dim T_q(\tilde R)$, so this must
  be an equality, and so we have equality in the two lemmas.
\end{remark}

\begin{cor}\label{cor:irred}
  $\tilde R$ is irreducible.
\end{cor}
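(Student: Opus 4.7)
The plan is to prove that $\tilde R$ coincides with the main component $M$ as a set. Since $\pi$ restricts to an isomorphism over $R \setminus (C \cap R)$, which is a dense open subset of the irreducible variety $R$, the closure of $\pi^{-1}(R \setminus (C \cap R))$ in $\tilde R$ equals $M$. Hence any hypothetical second irreducible component $W$ of $\tilde R$ must lie entirely in the exceptional preimage $\pi^{-1}(C \cap R)$, and it suffices to show that $\tilde R \cap \pi^{-1}(C \cap R) \subseteq M$ set-theoretically.

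I would first compute the set-theoretic fiber of $\tilde R$ above an arbitrary point $p \in C \cap R$. For $m \in \{i, \dots, j\}$ the divisor $D_m$ contains $C$, so Corollary \ref{cor:fibre-divisor} gives $\tilde D_m \cap F_p = \mathbb P(T_p(D_m)/T_p(C))$, a hyperplane in $F_p$; for $m \in \{j+1, \dots, k\}$ the divisor $D_m$ does not contain $C$ but does contain $p$, so the same corollary gives $\tilde D_m \supseteq F_p$, making those constraints vacuous. Therefore the fiber of $\tilde R$ over $p$ is $\mathbb P(\medcap_{m=i}^{j} T_p(D_m)/T_p(C))$, an irreducible projective space of dimension $g-c-1$ by smoothness of the component of $\medcap_{m=i}^{j} D_m$ through $C$.

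Next I would compute the fiber of $M$ over $p$. The main component $M$ is the strict transform of $R$ under $\pi$; since both $R$ and $C \cap R$ are smooth, $M$ is the blowup of $R$ along $C \cap R$, hence smooth and irreducible of dimension $\dim R$. Corollary \ref{cor:dim-tangent-space} applied at a general $q \in M$ forces $\dim R = g+h-c$, and since $\dim(C \cap R) = h$ one gets $\codim_R(C \cap R) = g-c$. Thus the exceptional fiber of $M \to R$ over $p$ is a projective space of dimension $g-c-1$, the same as the fiber of $\tilde R$ over $p$. Because $M \cap \pi^{-1}(p) \subseteq \tilde R \cap \pi^{-1}(p)$, with the right-hand side irreducible and the left-hand side of equal dimension, the two fibers coincide set-theoretically. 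Letting $p$ vary over $C \cap R$ yields the desired inclusion.

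The main obstacle I anticipate is the dimension bookkeeping --- in particular verifying that $\dim R = g+h-c$ and $\dim(C \cap R) = h$ from the definitions of $g$ and $h$, using that $C \subseteq D_m$ for $m \le j$ identifies $C \cap R$ locally with $C \cap \medcap_{m=j+1}^{k} D_m$. Once these identifications are in place, equality of the two fibers is essentially automatic from the fact that $M$ is an irreducible subvariety of the irreducible fiber of $\tilde R$ of the same dimension, and irreducibility of $\tilde R$ follows.
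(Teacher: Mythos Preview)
Your approach is different from the paper's and essentially correct, modulo the one step you flag yourself.  The paper argues by contradiction: each fibre of $\tilde R\to R$ over a point of $C\cap R$ is a linear subspace of the blowup fibre (the computation you give), hence connected; since $R$ is connected, so is $\tilde R$.  Any second component $N$ would then meet $M$ at some point $r$ lying over $C\cap R$, and there the upper bound $\dim T_r(\tilde R)\le g+h-c$ from Lemmas~\ref{lemma:ker-tangent-space} and~\ref{lemma:im-tangent-space} contradicts the fact that $r$ is singular on a scheme of local dimension $\ge\dim M=g+h-c$.  Your route is more constructive: you show directly that over every $p\in C\cap R$ the fibre of $M$ already fills the fibre of $\tilde R$, so $\tilde R=M$ set-theoretically and no contradiction is needed.

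The only real issue is your appeal to Corollary~\ref{cor:dim-tangent-space} for $\dim R=g+h-c$.  That corollary is stated only for $q$ lying over $C\cap R$ and gives $\dim T_q(\tilde R)=g+h-c$; from $\dim M\le\dim T_q(\tilde R)$ you obtain $\dim R\le g+h-c$, but what you actually need is $\dim R\ge g+h-c$, so that the $(g-c-1)$-dimensional exceptional fibre of $M$ is not strictly smaller than that of $\tilde R$.  That inequality (in fact equality) comes from the splaying in Proposition~\ref{prop:one-sided} and is exactly Proposition~\ref{prop:same-dim}, whose proof uses nothing beyond Proposition~\ref{prop:one-sided} and is logically independent of the present corollary.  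You can even bypass the dimension count: Proposition~\ref{prop:one-sided} gives $T_p(C)+\medcap_{m>j}T_p(D_m)=T_p(X)$, and writing an arbitrary $v\in\medcap_{m=i}^{j}T_p(D_m)$ as $a+b$ along this decomposition yields $b\in T_p(R)$, so $T_p(R)+T_p(C)=\medcap_{m=i}^{j}T_p(D_m)$ and the inclusion $\mathbb P\bigl(T_p(R)/T_p(C\cap R)\bigr)\hookrightarrow \mathbb P\bigl(\medcap_{m=i}^{j}T_p(D_m)/T_p(C)\bigr)$ is already an equality of fibres.  (In fairness, the paper's own proof also invokes $\dim M=g+h-c$ before Proposition~\ref{prop:same-dim} is stated; both arguments ultimately rest on Proposition~\ref{prop:one-sided}.)
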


\begin{proof}
  Every fibre of $f$ restricted to $\tilde R$ is a linear
  subspace of projective space and is therefore connected; since $R$ is
  connected, the same follows for $\tilde R$.  Thus suppose that $\tilde R$ is
  reducible.  If so, then there is a component $N$ that is not
  disjoint from $M$, by connectedness.
  At a point $r \in N \cap M$, the local dimension of $R$ is at least
  $\dim M = g+h-c$; but $r$ is a singular point of $R$, and hence the dimension
  of $T_r(R)$ is greater than this, contradiction.
\end{proof}

\begin{prop}\label{prop:same-dim}
  The dimension of $R$ is also $g+h-c$.
\end{prop}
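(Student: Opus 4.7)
The plan is to bypass the blowup analysis and compute $\dim R$ directly at a point $p \in R \cap C$, using Proposition~\ref{prop:one-sided} together with the standard transversality fact: if $Y_1, Y_2 \subseteq X$ are smooth subvarieties with $T_p Y_1 + T_p Y_2 = T_p X$, then $Y_1 \cap Y_2$ is smooth at $p$ of dimension $\dim Y_1 + \dim Y_2 - \dim X$.

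First I would choose $p \in R \cap C$ lying in a component of $C \cap \medcap_{n=j+1}^{k} D_n$ of local dimension $h$; such $p$ exists because $R \cap C \neq \emptyset$ and $R \subseteq \medcap_{n=j+1}^{k} D_n$. Applying Proposition~\ref{prop:one-sided} with $U = T \cup Q$ yields that $T \cup Q$ is splayed at $p$ by $(T, Q \setminus T)$. Writing $B$ for the smooth component of $\medcap_{n=j+1}^{k} D_n$ through $p$, and using that $C$ is the smooth component of $\medcap T$ through $p$, the splaying reads $T_p C + T_p B = T_p X$. The transversality formula then gives $h = \dim (C \cap B) = c + \dim B - \dim X$, and hence $\dim B = \dim X + h - c$.

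Next I would apply Lemma~\ref{lem:subset-splayed} to $Q \subset T \cup Q$ with the splaying above: both $T \cap Q = \{D_i, \dots, D_j\}$ and $Q \setminus T = \{D_{j+1}, \dots, D_k\}$ are nonempty because $i \le j < k$, so $Q$ is splayed at $p$ by $(T \cap Q, Q \setminus T)$. Let $A$ denote the smooth component of $\medcap_{n=i}^{j} D_n$ through $p$; it contains $C$ and has dimension $g$ by definition. The splaying now reads $T_p A + T_p B = T_p X$, and a second application of the transversality formula shows that $A \cap B$ is smooth at $p$ of dimension $g + \dim B - \dim X = g + h - c$. Since in a neighbourhood of $p$ one has $\medcap Q = A \cap B$, and $R$ is the component of $\medcap Q$ through $p$, this gives $\dim R = g + h - c$ as required.

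The main obstacle is purely bookkeeping: one must identify the relevant components of the various intersections through $p$ with the objects $C$, $A$, $B$, $R$ of the setup, and interpret the constant $h$ as the local dimension of $C \cap \medcap_{n=j+1}^{k} D_n$ at the chosen point $p$ (possible by refining the choice of $p$ inside $R \cap C$). Once these matches are made and the two splayings are produced by Proposition~\ref{prop:one-sided} and Lemma~\ref{lem:subset-splayed}, the proposition reduces to two routine applications of the transversality formula for splayed smooth subvarieties.
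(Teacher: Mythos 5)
Your proposal is correct and follows essentially the same route as the paper: both invoke Proposition~\ref{prop:one-sided} to splay $T\cup Q$ by $(T,Q\setminus T)$ at a point $p\in R\cap C$, restrict this splaying to $Q$ via Lemma~\ref{lem:subset-splayed}, and then apply the formula $\dim V+\dim W=\dim(V+W)+\dim(V\cap W)$ twice. The only difference is cosmetic: you phrase the computation as transversality of the smooth components $A$, $B$, $C$, whereas the paper works directly with intersections of tangent spaces, which coincide by the arrangement hypothesis.
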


\begin{proof}
  Indeed, we showed in Proposition \ref{prop:one-sided} that $\{D_1,\dots,D_k\}$
is splayed at $p$ by $(\{D_1,\dots,D_j\},\{D_{j+1},\dots,D_k\})$.
It follows that either $i = j+1$ or $Q$ is splayed by
$(\{D_i,\dots,D_j\},\{D_{j+1},\dots,D_k\})$.

We now recall that a splaying means that the sum of the intersections of
tangent spaces of the divisors on the two sides at a point spans the tangent
space and use the fact that $\dim V + \dim W = \dim (V+W) + \dim V \cap W$ for
subspaces of a vector space.  From the first statement, we conclude that
$$\dim \bigcap_{n=j+1}^k T_p({D_n}) = \dim X + \dim \bigcap_{n=1}^k T_p({D_n}) - \dim \bigcap_{n=1}^j T_p({D_n}) = \dim X + h - c.$$
From the second, we see that
\begin{align*}
  \dim \bigcap_{n=i}^k T_p({D_n}) & = \dim \bigcap_{n=i}^j T_p({D_n}) + \dim \bigcap_{n=j+1}^k T_p({D_n}) - \dim X \\
  & = g + \dim X + h - c - \dim X = g +h - c.
\end{align*}

\end{proof}

\begin{cor}\label{cor:blowup-smooth}
  $\tilde R$ is smooth.
\end{cor}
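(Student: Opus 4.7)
The plan is to assemble the preceding dimension computations into a single smoothness verification. By Corollary~\ref{cor:irred}, $\tilde R$ is irreducible, and since $M$ is defined as the component of $\tilde R$ mapping dominantly onto $R$, irreducibility forces $\tilde R = M$. Because $\pi|_M : M \to R$ is birational, I conclude $\dim \tilde R = \dim R$, which by Proposition~\ref{prop:same-dim} equals $g+h-c$.

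To check smoothness pointwise, I would split into two cases depending on whether a point $q \in \tilde R$ lies over $C$. If $q$ lies above some $p \in C \cap R$, then $q \in M$, and Corollary~\ref{cor:dim-tangent-space} applies to give $\dim T_q(\tilde R) = g+h-c = \dim \tilde R$ directly. Otherwise $q$ lies above a point $p \in R \setminus C$, where $\pi$ restricts to a local isomorphism onto a neighbourhood of $p$ in $R$; since $R$ is smooth at $p$ (being an intersection component of the arrangement $S$), smoothness of $\tilde R$ at $q$ follows at once.

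I do not anticipate any serious obstacle at this stage: the substantive work has already been carried out in the preceding lemmas and corollaries, which furnish both the tangent-space equality for $q \in M$ and the dimension formula for $R$. The only delicate point to confirm is that irreducibility of $\tilde R$ genuinely forces $\tilde R = M$, so that Corollary~\ref{cor:dim-tangent-space} applies at \emph{every} point above $C$ and not only at a generic one; this is immediate from the definition of $M$ as a component of $\tilde R$. With that observed, the corollary is simply the synthesis of irreducibility, the tangent-space bound, and the dimension formula.
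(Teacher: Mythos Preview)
Your proposal is correct and follows essentially the same route as the paper: combine Corollary~\ref{cor:irred} (so $\tilde R = M$), Corollary~\ref{cor:dim-tangent-space} (so $\dim T_q(\tilde R) = g+h-c$ for $q\in M$), and Proposition~\ref{prop:same-dim} (so $\dim R = g+h-c$), together with $\dim\tilde R=\dim R$, to conclude smoothness. Your explicit case split between points above $C$ and points above $R\setminus C$ is a welcome clarification that the paper's terse argument leaves implicit.
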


\begin{proof}
  By the smoothness of $\medcap_{n=i}^k \tilde D_n$
  and the fact that $\dim R = \dim \tilde R$ we have
  $$\dim \bigcap_{n=i}^k \tilde D_n = \dim \bigcap_{n=i}^k D_n = \dim \bigcap T_p({D_n}) = \dim \bigcap T_q({\tilde D_n}).$$
\end{proof}

At this point we are ready to complete the proof of Theorem \ref{thm:main}.
After that, we will establish our resolution procedure
in Proposition \ref{prop:resolution}.

\begin{prop}\label{prop:blowup-still-splayed}
  Let $B$ be a component of the intersection of some $D_i$ that is splayed as
  $(S_1,S_2)$.  Then the corresponding component of the intersection of the
  same $\tilde D_i$ is splayed as $(\tilde S_1,\tilde S_2)$, that is, by the
  same decomposition of the set of indices.  In addition, the union of any
  nonempty
  subset $U \subseteq \{\tilde D_i\}$ with $\{E\}$ is splayed by $(\{E\},U)$.
\end{prop}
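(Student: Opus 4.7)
The plan is to analyze both parts through the tangent map $df: T_q(\tilde X) \to T_p(X)$ from Proposition \ref{prop:tangent-map}, whose kernel is $T_q(F)$ and image $T_p(C) + \langle v_q \rangle$, together with the identification $r: T_q(F) \xrightarrow{\sim} T_p(X)/(T_p(C) + \langle v_q \rangle)$ and its refinement in Corollary \ref{cor:fibral-dirs}. Since the blowup is an isomorphism away from $C$, the only interesting case is when $q$ lies in the exceptional fibre $F$ above some $p \in B \cap C$; I also assume $q \in \bigcap_{D \in U} \tilde D$ (where $U := S_1 \cup S_2$), since otherwise both assertions at $q$ hold vacuously.

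For Part~2, since $T_q(E)$ has codimension one in $T_q(\tilde X)$ and $df(T_q(E)) = T_p(C)$, it suffices to exhibit a single vector in $\bigcap_{D \in U} T_q(\tilde D)$ whose image under $df$ lies outside $T_p(C)$. A lift of the direction $v_q$ provides such a vector: because $q \in \bigcap \tilde D$, the class of $v_q$ lies in the image of $T_p(D)$ in $N_{C/X,p}$ for each $D \in U$, and, using smoothness of $B$ together with the arrangement hypothesis, these lifts can be chosen coherently to give a common element $\bar v_q \in T_p(B) \setminus T_p(C)$. A preimage $\tilde v_q \in \bigcap T_q(\tilde D)$ of $\bar v_q$ under $df$ then witnesses the splaying $(\{E\}, U)$.

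For Part~1, I would introduce local coordinates at $p$ adapted to the splaying, so that $V_1 := \bigcap_{D \in S_1} T_p(D)$ and $V_2 := \bigcap_{D \in S_2} T_p(D)$ satisfy $V_1 + V_2 = T_p(X)$. The plan is to transfer this splitting to $T_q(\tilde X) \cong T_q(F) \oplus W$, where $W$ is identified via $df$ with $T_p(C) + \langle v_q \rangle$, and analyze each factor separately. For each $D \in S_i$, Corollary \ref{cor:fibral-dirs} identifies $T_q(F) \cap T_q(\tilde D)$ with $T_p(D)/(T_p(C) + \langle v_q \rangle)$ when $C \subseteq D$, while Corollary \ref{cor:fibre-divisor} shows $T_q(F) \subseteq T_q(\tilde D)$ when $C \not\subseteq D$; combining these identifications with $V_1 + V_2 = T_p(X)$ yields a splaying in $T_q(F) \cong N_{C/X,p}/\langle v_q \rangle$. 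A dimension count combining this with the corresponding analysis on the $W$-component, in the spirit of Proposition~\ref{prop:same-dim}, then produces $\bigcap_{D \in S_1} T_q(\tilde D) + \bigcap_{D \in S_2} T_q(\tilde D) = T_q(\tilde X)$.

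The main obstacle is that $C$ is defined by divisors of $S$ not necessarily in $U$, so local coordinates on $\tilde X$ near $q$ will mix the variables associated to the two sides of the splaying in a nontrivial way. What rescues the argument is that the tangent-space analysis for $\tilde D$ with $D \in U$ depends only on the image of $T_p(D)$ in $N_{C/X,p}$, and the hypothesis $V_1 + V_2 = T_p(X)$ descends automatically to a splaying of $N_{C/X,p}$ by the same index sets. The necessary case analysis distinguishing $D \in U$ with $C \subseteq D$ from those with $C \not\subseteq D$, and correspondingly when $T_q(F) \subseteq T_q(\tilde D)$ versus when $T_q(F) \cap T_q(\tilde D)$ is a proper subspace of $T_q(F)$, is what makes the technical details intricate.
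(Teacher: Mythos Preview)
Your treatment of Part~2 is essentially the paper's: the paper phrases it as lifting a curve in $\bigcap D_i$ through $p$ in the direction corresponding to $q$, which is exactly your ``lift of $v_q$'' argument.

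For Part~1, however, you take a much harder road than the paper. You propose to decompose $T_q(\tilde X)$ into fibral and non-fibral pieces and track each $T_q(\tilde D)$ through both, invoking Corollary~\ref{cor:fibral-dirs} and a further case analysis on whether $C \subseteq D$. The paper instead observes that the splayedness condition at a smooth point is purely numerical: it says $\dim(\bigcap_{S_1} T_p D) + \dim(\bigcap_{S_2} T_p D) - \dim(\bigcap_{S_1 \cup S_2} T_p D) = \dim X$. By Corollary~\ref{cor:blowup-smooth} all three intersections remain smooth after blowing up, and by Proposition~\ref{prop:same-dim} their dimensions are unchanged; since $\dim \tilde X = \dim X$, the same numerical identity holds with tildes. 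That is the entire argument.

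Your approach is not wrong in spirit, but it is effectively re-deriving the content of Corollary~\ref{cor:blowup-smooth} and Proposition~\ref{prop:same-dim} inside this proof, and the step ``a dimension count combining this with the corresponding analysis on the $W$-component'' hides exactly the delicate point: sums of subspaces do not in general respect a direct-sum decomposition $T_q(\tilde X) = T_q(F) \oplus W$, so showing the splaying holds on $T_q(F)$ and on $W$ separately does not immediately give it on $T_q(\tilde X)$. You would end up forced back to the global dimension count anyway---at which point the paper's two-line argument is all you need.
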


\begin{proof}
  For the first statement, note that since all intersections are smooth and
  of the same dimension both before and after the blowup, we have
  $\dim V = \dim \tilde V, \dim W = \dim \tilde W, \dim (V \cap W) = \dim (\tilde V \cap \tilde W)$, where $V, W$ are the appropriate components of the
  intersections of the two sets in the splaying and $\tilde V, \tilde W$ those
  of the intersections of the strict transforms.  Since
  $\dim V + \dim W - \dim (V \cap W) = n$, the same holds with tildes
  everywhere.  

  For the second statement, it suffices to find a vector tangent to all the
  $\tilde D_i \in U$ at a point $q$ lying above $p$
  that is not contained in $E$.  As before, we do this by choosing
  a curve through $q$ that lifts a curve in $\medcap D_i$ smooth at $p$ and whose
  tangent vector is not in the direction of $C$.  If $p$ is an isolated point
  of the intersection $\medcap D_i$, then there is no $q$ and nothing to do.
  (Note that our assumptions require that more than one $D_i$ pass through
  the point $p$.  Otherwise, the codimension of $p$ would be $1$ and only
  one divisor could pass through it; thus there would be no splaying.)
  This proves Theorem~\ref{thm:main} (1): the strict transform of the
  non-splayed component that was blown up is no longer a component of the
  intersection and no splayed component has become non-splayed.
  It also proves Theorem~\ref{thm:main} (2).
\end{proof}

We complete the proof of Theorem \ref{thm:main} by proving the third
statement, that $\tilde S \cup \{E\}$ is an arrangement.

\begin{proof} As we have already shown in Corollary \ref{cor:blowup-smooth}
  that all components of intersections of $\tilde S$ are smooth, it is
  enough to consider the intersections of these with $E$.  We just showed that
  $T_q(\tilde R) \not \subseteq T_q(E)$, where $\tilde R$ is such a component;
  it follows that $\dim (T_q(\tilde R) \cap T_q(E)) = \dim T_q(\tilde R) - 1$.
  Since $\tilde R \not \subseteq E$, we also have
  $\dim (\tilde R \cap E) = \dim \tilde R - 1$, and, in light of the smoothness
  of $R$, that $R \cap E$ is smooth as well.
\end{proof}
  
The next two lemmas and corollary are standard results for which no
originality is claimed; we include a proof for the reader's convenience.
The next lemma shows that a normal crossings divisor is locally
a Boolean arrangement of hyperplanes in the sense of
\cite[Example 1.8]{orlik-terao}.

\begin{lemma}\label{lem:completely-transverse} Let $D_1, \dots, D_m$ be
  an arrangement of divisors.  Then $\medcup_{i=1}^m D_i$ is a normal
  crossings divisor on $X$ if and only if all components of the intersection
  of two or more $D_i$ are splayed.
\end{lemma}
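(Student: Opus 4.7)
The plan is to prove the two implications separately, establishing the forward direction by a direct coordinate computation and the converse by induction on the number of divisors passing through a point.

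For $(\Rightarrow)$, assume $\medcup_i D_i$ is a normal crossings divisor. Fix a point $p$ and let $D_{i_1}, \ldots, D_{i_k}$ be the divisors through $p$; choose \'etale local coordinates $x_1, \ldots, x_n$ at $p$ in which $D_{i_j} = V(x_j)$. For any subset $I$ of these divisors with $|I| \ge 2$, any component $C$ of $\medcap I$, and any nontrivial partition $I = I_1 \sqcup I_2$, the space $\medcap_{D \in I_\alpha} T_p(D)$ is the coordinate subspace of codimension $|I_\alpha|$ cut out by vanishing of the $x_j$ indexed by $I_\alpha$. The dimension formula $\dim V + \dim W = \dim(V+W) + \dim(V \cap W)$ gives immediately that these two subspaces sum to $T_p(X)$, so $I$ is splayed at $p$, hence along $C$.

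For $(\Leftarrow)$, I will show by strong induction on $k$ that if $D_1, \ldots, D_k$ are all the divisors of the arrangement that pass through $p$ and all sub-arrangement intersection components are splayed, then the $D_i$ meet transversally at $p$, i.e.\ $\dim \medcap_i T_p(D_i) = n - k$. The cases $k = 1$ (smoothness of $D_1$) and $k = 2$ (the single splaying $(\{D_1\},\{D_2\})$ together with $\dim T_p(D_i) = n-1$) are immediate. For $k \ge 3$, the splayed hypothesis applied to the component of $\medcap_i D_i$ through $p$ yields a partition $\{1, \ldots, k\} = S_1 \cup S_2$ with both parts nonempty and
\[\medcap_{i \in S_1} T_p(D_i) + \medcap_{j \in S_2} T_p(D_j) = T_p(X).\]
The sub-collections $\{D_i : i \in S_1\}$ and $\{D_j : j \in S_2\}$ each have strictly fewer than $k$ divisors through $p$ and inherit the splayed hypothesis, since every subset of size at least two of a sub-arrangement is a subset of the original arrangement. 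The inductive hypothesis gives $\dim \medcap_{i \in S_1} T_p(D_i) = n - |S_1|$ and $\dim \medcap_{j \in S_2} T_p(D_j) = n - |S_2|$, and one more application of the dimension formula yields $\dim \medcap_i T_p(D_i) = n - k$.

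Once transversality is established at $p$, I pick local defining equations $f_1, \ldots, f_k$ for $D_1, \ldots, D_k$; their differentials $df_1, \ldots, df_k$ are linearly independent in $T_p^*(X)$, so $f_1, \ldots, f_k$ extend to a regular system of parameters at $p$. Taking these as the first $k$ local coordinates exhibits $\medcup_i D_i$ as a union of coordinate hyperplanes near $p$, which is the definition of a normal crossings divisor. The main obstacle is verifying that the splayed hypothesis is correctly inherited by the sub-arrangements indexed by $S_1$ and $S_2$; the key is reading the hypothesis of the lemma as asserting the splaying property for \emph{every} subset of size at least two (not merely for the maximal intersection), so that passing to a sub-arrangement automatically preserves the hypothesis.
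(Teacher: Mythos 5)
Your proof is correct. The forward direction is the same coordinate computation as the paper's. For the converse, however, you take a different route: the paper inducts on the number of divisors with the inductive statement ``the union of any $k$ of the $D_i$ is a normal crossings divisor,'' and at each step uses the splaying to write the local equations of $\medcup E$ and $\medcup F$ in disjoint sets of variables; you instead carry only the numerical statement $\dim \medcap_i T_p(D_i) = n-k$ through the induction (splitting via the splaying and the formula $\dim V + \dim W = \dim(V+W)+\dim(V\cap W)$) and recover the normal crossings structure only at the end, from linear independence of the differentials $df_1,\dots,df_k$. Your version is essentially the content of the paper's subsequent Lemma~\ref{lemma:all-splayed-easy} (splayed $\Rightarrow$ every $r$-fold intersection has codimension $r$ $\Rightarrow$ normal crossings), packaged as a self-contained induction; it is arguably cleaner, since it avoids the slightly delicate claim in the paper's proof that the defining forms of $\medcup E$ and $\medcup F$ can simultaneously be expressed in disjoint variables, replacing it with pure linear algebra on tangent spaces plus one appeal to the regular-system-of-parameters criterion. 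One wording quibble: your inductive statement is phrased for ``all the divisors of the arrangement that pass through $p$,'' but in the inductive step you apply it to the proper sub-collections indexed by $S_1$ and $S_2$; the statement should be formulated for an arbitrary subset of the divisors through $p$, which is what your argument actually proves and uses.
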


\begin{proof} First suppose that $\medcup_{i=1}^m D_i$ is a normal crossings
  divisor and let $C$ be a component of the intersection of
  $D_{n_1}, \dots, D_{n_k}$.  Then locally near a point of $C$ the $D_{n_i}$
  can be given by $x_i = 0$.  Thus any partition into two nonempty subsets
  constitutes a splaying.

  To prove the converse, we show by induction on $k$ that the union of any
  $k$ of the $D_i$ is a normal crossings divisor.
  First, if we consider a single $D_i$, it is
  certainly a normal crossings divisor since it is smooth.
  Now suppose the result known for sets of fewer than $k$ divisors, and
  consider a component of the intersection $\medcap_{i=1}^k D_{n_i}$.
  Suppose that $\{D_{n_i}\}$ is splayed by
  $E \cup F$.  Then $\medcup E$, $\medcup F$ are normal crossings divisors by
  inductive hypothesis, so each one is locally defined by a product of
  linearly independent linear forms.  But these forms can be expressed in
  disjoint sets of variables, by the hypothesis that $\{D_{n_i}\}$ is splayed by
  $E \cup F$.  It follows that $\{D_{n_i}\}$ is expressed locally along $C$
  as a product of distinct coordinates.
\end{proof}
  
\begin{lemma}\label{lemma:all-splayed-easy} Let $\{D_1, \dots, D_m\}$ be
  an arrangement of divisors on $X$ such that $\medcup_{i=1}^m D_i$ is a normal
  crossings divisor as in Remark \ref{lem:completely-transverse}.
  Then every component of the intersection of $r$ divisors has codimension~$r$
  (the empty scheme is considered as having no components).
  Further, every such component is splayed by every nontrivial partition of
  the set of divisors containing it.  Conversely, if every component of the
  intersection of $r$ divisors in an arrangement has codimension $r$, then
  the union of the arrangement is a normal crossings divisor.
\end{lemma}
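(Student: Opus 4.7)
The plan is to derive both implications from Lemma \ref{lem:completely-transverse}, which says that $\medcup D_i$ is a normal crossings divisor precisely when all intersection components are splayed.

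For the forward direction, I would fix a point $p$ on a component $C$ of $\medcap_{i=1}^r D_{n_i}$ and invoke the (forward direction of the) proof of Lemma \ref{lem:completely-transverse} to choose \'etale local coordinates $x_1,\dots,x_n$ at $p$ in which each $D_{n_i}$ is cut out by a distinct coordinate $x_{j_i}=0$ (the indices $j_i$ must be distinct because the $D_{n_i}$ are distinct smooth divisors through $p$). The local equation of $C$ at $p$ is then $(x_{j_1},\dots,x_{j_r})$, so $C$ has codimension exactly $r$. For the splaying claim, given any nontrivial partition $(A,B)$ of $\{D_{n_1},\dots,D_{n_r}\}$, the subspaces $\medcap_{D\in A}T_p(D)$ and $\medcap_{D\in B}T_p(D)$ are annihilated by disjoint subsets of the coordinates $\{x_{j_1},\dots,x_{j_r}\}$, hence their sum is all of $T_p(X)$; Proposition \ref{prop:constant-splaying} then propagates this splaying along all of $C$.

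For the converse, assume every intersection of $r$ distinct divisors in the arrangement has all components of codimension $r$. I would show that every intersection is splayed and then apply Lemma \ref{lem:completely-transverse}. Fix a component $C$ of $\medcap_{i=1}^k D_{n_i}$ and a point $p\in C$, and let $f_i\in\mathcal{O}_{X,p}$ be a local equation of $D_{n_i}$. Since each $D_{n_i}$ is smooth, $f_i\in\mathfrak{m}_p\setminus\mathfrak{m}_p^2$. The codimension-$k$ hypothesis forces the images of $f_1,\dots,f_k$ in $\mathfrak{m}_p/\mathfrak{m}_p^2$ to be linearly independent, so I may complete them to a regular system of parameters $f_1,\dots,f_k,g_{k+1},\dots,g_n$. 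In these coordinates each $D_{n_i}$ is a coordinate hyperplane, so exactly as in the forward direction any nontrivial partition yields a splaying at $p$, and Lemma \ref{lem:completely-transverse} delivers the normal crossings conclusion.

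The only step that is not a pure unpacking of definitions is the linear independence of $f_1,\dots,f_k$ modulo $\mathfrak{m}_p^2$; this rests on the fact that in a regular local ring a codimension-$k$ ideal needs at least $k$ generators, with equality only when those generators form part of a regular system of parameters. Once this point is settled, the rest of the argument reduces to the coordinate calculation above and an appeal to Lemma \ref{lem:completely-transverse}.
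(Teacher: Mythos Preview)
Your argument is correct in outline and lands on the same conclusions as the paper, but the packaging differs: you pull the local coordinate description out of the proof of Lemma~\ref{lem:completely-transverse} and read off both the codimension and the splaying from the fact that the $D_{n_i}$ become coordinate hyperplanes, whereas the paper argues intrinsically via the identity $\dim(V\cap W)+\dim(V+W)=\dim V+\dim W$ applied to intersections of tangent spaces (an induction on $r$ for the codimension statement, then a general-position count for the universal splaying and the converse).  The two routes are equivalent in content; yours is perhaps more concrete, the paper's avoids choosing coordinates.

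One step in your converse needs repair.  You assert that ``in a regular local ring a codimension-$k$ ideal needs at least $k$ generators, with equality only when those generators form part of a regular system of parameters,'' and use this to get linear independence of the $\bar f_i$ in $\mathfrak{m}_p/\mathfrak{m}_p^2$.  The quoted fact is false: in $k[[x,y]]$ the elements $x$ and $x+y^2$ both lie in $\mathfrak{m}\setminus\mathfrak{m}^2$ and generate the height-$2$ ideal $(x,y^2)$, yet are linearly dependent modulo $\mathfrak{m}^2$.  What rescues you is the arrangement hypothesis that the intersection is \emph{smooth}: then $\O_{X,p}/(f_1,\dots,f_k)$ is regular of dimension $\dim X-k$, so its cotangent space $\mathfrak{m}_p/(\mathfrak{m}_p^2+(f_1,\dots,f_k))$ has dimension $\dim X-k$, which forces the $\bar f_i$ to be independent.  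Equivalently, and closer to the paper's phrasing, smoothness plus the codimension hypothesis give $\dim\bigcap_i T_p(D_{n_i})=\dim X-k$ via Lemma~\ref{lem:tangent-space-meet}, whence the $df_i|_p$ are independent.  Once you make this correction the rest of your argument goes through unchanged.
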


\begin{proof} For the first statement, consider
  a counterexample with minimal $r$: clearly $r > 1$.
  Suppose that we have the splaying $(\{D_1,\dots,D_q\},\{D_{q+1},\dots,D_r\})$.
  Then $\dim \medcap_{i=1}^q D_i = \dim X - q$ and
  $\dim \medcap_{i=q+1}^r D_i = \dim X - (r-q)$.  Since the intersections are smooth
  and the sum of the tangent spaces at a point $p$ is $T_p(X)$, the intersection
  of the tangent spaces has dimension $\dim X - r$, so the component has
  dimension $r$, contradiction.

  For the second statement, it follows from the first that, for every point
  $p$ on an intersection component $C$ of $D_1, \dots, D_k$,
  the tangent spaces of the divisors
  containing it are subspaces of codimension $1$ of $T_p(X)$ in general
  position.  It follows immediately that the intersection of
  $s$ of them is of codimension $s$.  Applying this to a proper nonempty
  subset $T \subset S = \{D_1,\dots,D_k\}$, its complement, and $S$, we see that
  the intersections of tangent spaces over the subset and its complement
  have codimension $\#T, k-\#T$, while their intersection has codimension $k$.
  Thus the sum of the intersections is $T_p(X)$ as desired.

  The converse again follows from the statement that
  $\dim (V \cap W) + \dim (V + W) = \dim V + \dim W$.
\end{proof}

\begin{cor}\label{cor:blowup-completely-splayed}
  Let $\{D_1,\dots,D_m\}$ be an arrangement of divisors on
  $X$ whose union is a normal crossings divisor
  and let $C$ be a component of the intersection of a subset of the
  $D_i$.  Then the union of the divisors in the
  arrangement $\{\tilde D_1,\dots,\tilde D_m\} \cup \{E\}$
  on the blowup $\tilde X$ of $X$ along $C$ is also a normal crossings divisor.
\end{cor}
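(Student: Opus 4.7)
The plan is to reduce the corollary to the characterization of normal crossings divisors given in Lemma \ref{lem:completely-transverse}: the union of an arrangement is normal crossings if and only if every component of the intersection of two or more members of the arrangement is splayed. So I need to verify two things about the arrangement $\{\tilde D_1,\dots,\tilde D_m\} \cup \{E\}$ on $\tilde X$: first, that it really is an arrangement (all multiple intersections are smooth), and second, that every such intersection component is splayed.

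The first point is immediate from Proposition \ref{prop:blowupIsArrangement}, which guarantees that blowing up a component of an intersection of an arrangement yields an arrangement when $E$ is adjoined. For the second point, I consider an intersection component $B$ of some subset $U$ of $\{\tilde D_i\} \cup \{E\}$ and split into two cases according to whether $E \in U$ or not.

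If $E \notin U$, then $B$ lies above a component $B_0$ of the corresponding intersection of the untilded $D_i$. By hypothesis and Lemma \ref{lem:completely-transverse}, $B_0$ is splayed by some partition $(S_1,S_2)$ of the indexing set. The first part of Proposition \ref{prop:blowup-still-splayed} says exactly that $B$ is then splayed by the same partition $(\tilde S_1,\tilde S_2)$ of the strict transforms. If $E \in U$, write $U = \{E\} \cup U'$ with $U'$ nonempty (otherwise $U$ is a single divisor and there is nothing to check). The second part of Proposition \ref{prop:blowup-still-splayed} states that $U' \cup \{E\}$ is splayed at every point of $B$ by the partition $(\{E\}, U')$, which is exactly what we need.

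Combining the two cases, every multiple intersection component of the arrangement $\{\tilde D_1,\dots,\tilde D_m\}\cup\{E\}$ is splayed. Applying Lemma \ref{lem:completely-transverse} in the reverse direction, the union of this arrangement is a normal crossings divisor on $\tilde X$, completing the proof. There is no serious obstacle here; the work has been done in Propositions \ref{prop:blowupIsArrangement} and \ref{prop:blowup-still-splayed}, and the corollary just assembles them via the splayedness criterion for normal crossings.
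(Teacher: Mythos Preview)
Your proof is correct and follows essentially the same route as the paper: both use Proposition~\ref{prop:blowup-still-splayed} to handle intersections not involving $E$, and both use the curve-lifting argument (you via the second part of Proposition~\ref{prop:blowup-still-splayed}, the paper by repeating it inline through Lemma~\ref{lemma:all-splayed-easy}) to handle intersections involving $E$. The only cosmetic difference is that you frame everything through the splayedness criterion of Lemma~\ref{lem:completely-transverse}, whereas the paper switches to the codimension criterion of Lemma~\ref{lemma:all-splayed-easy} for the second case.
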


\begin{proof}
  For subsets of $\{\tilde D_1,\dots,\tilde D_m\}$ the statement follows
  immediately from Proposition \ref{prop:blowup-still-splayed}.
  By Lemma \ref{lemma:all-splayed-easy}, it suffices to show that
  $\medcap_{i=1}^m T_q(\tilde D_i)$ is not contained in $T_q(E)$ for any
  $q \in \medcap \tilde D_i$.  Let $C$ be the component of $\medcap \tilde D_i$
  containing $q$; if $\dim C = 0$, then $C$ pulls back to an empty scheme
  and the statement is vacuous.  Otherwise, take a curve in $\medcap D_i$
  in the direction of $q$; the tangent vector of its pullback to $\tilde X$
  is not in $T_q(E)$.
\end{proof}

\begin{prop}\label{prop:resolution} Suppose given an arrangement of divisors
  for which every intersection component is either admissible or splayed and
  a double cover $Y \to X$ whose branch locus is the union of the divisors in
  the arrangement.  Then $Y$ admits a crepant resolution of singularities.
\end{prop}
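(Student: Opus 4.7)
The plan is to execute Algorithm~\ref{alg:resolve} and to verify that it
  terminates in a crepant resolution by combining
  Proposition~\ref{prop:admissible-is-crepant}, Theorem~\ref{thm:main}, and
  Corollary~\ref{cor:blowup-completely-splayed}. The argument splits into
  two phases matching the two stages of the algorithm.

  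\emph{Phase 1.} While some intersection component of the arrangement is
  non-splayed, choose a minimal such component $C$. By hypothesis $C$ is
  admissible, so Proposition~\ref{prop:admissible-is-crepant} shows that the
  blowup $\pi \colon \tilde X \to X$ along $C$ induces a crepant blowup of
  $Y$. By Theorem~\ref{thm:main} the configuration $\tilde S \cup \{E\}$ is
  again an arrangement, the total number of non-splayed components strictly
  decreases, and every new intersection component meeting $E$ is splayed. To
  iterate, we must check that the hypothesis ``every non-splayed component is
  admissible'' is preserved. Components involving $E$ are splayed, so it
  suffices to consider strict transforms $\tilde C'$ of components $C'$ that
  remain non-splayed. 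Minimality of $C$ forces $C' \not\subseteq C$, so $\pi$
  restricts to an isomorphism on a dense open of $C'$; hence
  $\codim \tilde C' = \codim C'$, and the equivalence
  $\tilde D_i \supseteq \tilde C' \iff D_i \supseteq C'$ yields
  $|S_{\tilde C'}| = |S_{C'}|$. Thus admissibility transfers, and the strict
  decrease of a nonnegative integer forces Phase~1 to terminate.

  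\emph{Phase 2.} Now every intersection component is splayed, so by
  Lemma~\ref{lem:completely-transverse} the union of the branch divisors is a
  normal-crossings divisor, and by Lemma~\ref{lemma:all-splayed-easy} every
  intersection of $r$ divisors has codimension~$r$. In particular, pairwise
  intersections have codimension~$2$ and are admissible. We iteratively blow
  up pairwise intersections in any order; by
  Corollary~\ref{cor:blowup-completely-splayed} the normal-crossings
  structure persists, and by Proposition~\ref{prop:admissible-is-crepant}
  each such blowup is crepant. Termination follows from the local model: at a
  point of $X$ where $k$ branch divisors meet, the double cover is locally
  $y^2 = x_1 \cdots x_k$, and blowing up $V(x_1,x_2)$ produces charts of the
  form $y^2 = x' x_3 \cdots x_k$ involving one fewer branch divisor through
  each preimage point. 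The maximum number of branch divisors through a point
  therefore strictly decreases, and once this maximum is at most~$1$ the
  double cover is smooth.

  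The main technical obstacle is the Phase~1 bookkeeping showing that
  admissibility is preserved for the non-splayed components that survive a
  blowup; this rests on the equality of codimensions and of
  containing-divisor sets for strict transforms of components not contained
  in the blown-up locus. Phase~2 is then essentially routine, given the
  local model and a bounded strictly-decreasing invariant.
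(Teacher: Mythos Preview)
Your proof follows the same two-phase strategy as the paper's own argument, and your Phase~1 bookkeeping (checking that admissibility survives for the remaining non-splayed components, via $\codim \tilde C'=\codim C'$ and $S_{\tilde C'}=S_{C'}$) is in fact more explicit than what the paper writes.

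There is, however, a small but genuine slip in your Phase~2 termination argument. Your local computation is correct: above a point where $k$ branch divisors meet, blowing up one pairwise intersection leaves at most $k-1$ branch divisors through each preimage point. But this does \emph{not} imply that the global maximum strictly decreases after a single blowup: the maximum could be attained at a point disjoint from the centre $C$, where nothing changes. A clean fix is to use a different invariant. For instance, the total number of irreducible components of $\bigcup_{i<j}(D_i\cap D_j)$ strictly decreases: the blown-up component disappears from $\tilde D_i\cap\tilde D_j$, and by Corollary~\ref{cor:blowup-completely-splayed} (normal crossings persists) no component of any $\tilde D_k\cap\tilde D_\ell$ is contained in $E$, so no new pairwise-intersection components are created. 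Alternatively, you can keep your invariant but argue more carefully that it is non-increasing and drops strictly once all pairwise intersections through the current maximal-multiplicity locus have been processed; either way the ``therefore strictly decreases'' as written needs adjustment.
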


\begin{proof}
  Following the procedure in \cite[Proposition 5.6]{cynk-hulek},
  we inductively blow up minimal non-splayed intersections.  By hypothesis,
  these are admissible, so by Proposition \ref{prop:admissible-is-crepant}
  they are crepant blowups.  By
  Corollary \ref{cor:blowup-smooth} and Proposition
  \ref{prop:blowup-still-splayed}, this preserves the inductive hypothesis
  that our divisors constitute an arrangement in which all intersection
  components are admissible or splayed; further, the number of non-splayed
  intersection components decreases at every step.  Thus after finitely
  many steps we obtain an arrangement whose union is a normal crossings
  divisor.  From there,
  we successively blow up all nonempty intersections of two divisors in the
  branch locus.  Again, this preserves the status of the union of the
  arrangement being a normal crossings divisor,
  and each of these is an admissible step and does not
  require introducing the exceptional divisor into the branch locus; thus after
  finitely many steps the divisors are pairwise disjoint.  At that point
  we take the fibre product of $Y \to X$ with the composition of the blowups
  to obtain the desired resolution.
\end{proof}

\begin{remark}\label{ref:uniqueness-derived-equivalence}
  The resolution of $Y$ constructed by the procedure of Proposition
  \ref{prop:resolution} is not unique, because blowing up a subvariety
  $C$ and then the strict transform of $C'$ is not the same as blowing up
  $C'$ and then the strict transform of $C$, unless $C \cap C'$ is empty.
  Even if we start with a branch locus that is a normal crossings divisor, so
  that all intersections are splayed as in Lemma \ref{lemma:all-splayed-easy},
  the resolution depends on the order in which we blow up the intersections
  of two of the components.  The combinatorial problem of expressing the
  number of different resolutions in terms of the sets of components with
  nonempty intersection seems to be difficult and will not be treated here.

  It is natural to ask about the relation between different resolutions
  constructed by our procedure.  Since they are constructed by blowing up
  the same loci or their strict transforms, they are isomorphic in
  codimension $1$, and hence the pullbacks of their canonical divisors to
  the fibre product over $Y$ are linearly equivalent: thus
  they are $K$-equivalent in the sense of
  \cite[Definition 1.1]{kawamata}.  Conjecture 1.2 of \cite{kawamata}
  then predicts that they are derived equivalent.  Can this be proved directly?
\end{remark}

\section{Primary decomposition of the singular subscheme of an arrangement}\label{sec:primary-dec}
In this section, all of our constructions are local, so we need only consider
affine schemes, and we are not concerned with crepant resolutions, so the
condition of admissibility is not relevant.  By the {\em singular subscheme}
of an affine scheme we mean the subscheme defined by the Jacobian ideal.
In this section we prove the following theorem:
\begin{thm}\label{thm:pc}
  Let $\{D_i\}$ be an arrangement of divisors on $X$,
  and let $S$ be the singular subscheme of $\medcup_i D_i$.
  Let $E = \medcap_{i \in S} D_i$ be an intersection of
  some of the $D_i$ which is splayed and not of codimension $2$.
  Then the minimal primary decomposition of
  $S$ does not contain a component supported on $E$.
\end{thm}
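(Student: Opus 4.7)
The plan is to reduce the statement to a local question at the generic point of $E$, use the splayed structure to factor the local defining equation, and then analyze the Jacobian ideal combinatorially.

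A primary component of the singular subscheme supported on $E$ exists iff the prime ideal $I_E$ is associated to $R/J$, where $J$ is the Jacobian ideal of a local defining equation $F = \prod_i f_i$ of $\medcup_i D_i$.  Localizing at the generic point $\eta$ of $E$ and passing to the completion $\hat R_\eta \cong \kappa(\eta)[[t_1,\ldots,t_c]]$ (with $c = \codim E$) preserves whether the maximal ideal is associated, by faithful flatness.  Since splayedness requires $|T_E| \ge 2$ we have $c \ge 2$; combined with $c \ne 2$, this gives $c \ge 3$.

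Next I would use the splaying $T_E = T_1 \cup T_2$: in $T^*_\eta X$, the spans of $\{df_i : i \in T_1\}$ and $\{df_j : j \in T_2\}$ intersect trivially.  The inverse function theorem for complete local rings then lets me choose coordinates $(x_1,\ldots,x_{c_1},y_1,\ldots,y_{c_2})$ for $\hat R_\eta$, with $c_1 + c_2 = c$, in which $F$ factors as $F = F_1(x)\,F_2(y)$ with $F_1 \in \kappa(\eta)[[x]]$ and $F_2 \in \kappa(\eta)[[y]]$.  The Jacobian ideal then reads $J = F_1 \tilde J_2 + F_2 \tilde J_1$, where $\tilde J_i$ is the Jacobian ideal of $F_i$ in its own variables, extended to $\hat R_\eta$.

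In the transverse subcase $|T_i| = c_i$, further straightening gives $F_1 = x_1 \cdots x_{c_1}$ and $F_2 = y_1 \cdots y_{c_2}$, making $J$ the explicit monomial ideal generated by $XY$, $YX/x_i$, and $XY/y_j$, where $X = \prod_i x_i$ and $Y = \prod_j y_j$.  I would verify combinatorially that the maximal ideal is not associated: for a hypothetical monomial $m \notin J$ with $\mathfrak{m} \cdot m \subseteq J$, a case analysis on the absence sets $A = \{i : x_i \nmid m\}$ and $B = \{j : y_j \nmid m\}$ (tracking which of the three types of generator of $J$ divides $x_k m$ or $y_l m$) forces $|A| = |B| = 1$ and $c_1 = c_2 = 1$; since $c \ge 3$ no such $m$ exists, so $\mathfrak{m}$ is not associated to $\hat R_\eta/J$.

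The main difficulty is the non-transverse subcase $|T_i| > c_i$, where $F_i$ is a product of smooth factors with linearly dependent differentials---geometrically a pencil of divisors through the common codimension-$c_i$ subvariety $\medcap_{T_i} D_j$---so that $\tilde J_i$ is no longer monomial.  I would handle this by showing that every associated prime of $\tilde J_i$ has codimension at most $c_i$ in $\kappa(\eta)[[x]]$ (essentially because $\sqrt{\tilde J_i}$ is the ideal of $\medcap_{T_i} D_j$), and then combining via a short exact sequence / Mayer--Vietoris argument applied to $J = F_1 \tilde J_2 + F_2 \tilde J_1$ to conclude that the maximal ideal cannot arise as an associated prime of $R/J$ without forcing $c \le 2$.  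This is the step that requires the most care.
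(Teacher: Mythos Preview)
Your overall strategy---localize at (the generic point of) $E$, complete, and use the splaying to factor $F = F_1(x)\,F_2(y)$ in separated variable sets---is exactly the paper's. The divergence is in how you analyze the Jacobian ideal once that factorization is in hand.

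The paper does not split into transverse and non-transverse subcases. Instead it invokes a lemma of Faber: for $F \in k[[\overline{x}]]$ and $G \in k[[\overline{y}]]$,
\[
(J_{FG},\,FG) \;=\; (F,G)\,\cap\,(J_F,F)\,\cap\,(J_G,G).
\]
This intersection immediately yields a primary decomposition (the paper's Theorem~\ref{thm:decomp-disjoint}): the components are the codimension-$2$ primes $(f_i,g_j)$ together with the extensions to $k[[\overline{x},\overline{y}]]$ of the primary components of $(J_F,F)$ and $(J_G,G)$ from their own variable blocks. The latter two types, being generated in only one block of variables, can never be supported on the full maximal ideal; the former have codimension~$2$. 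Hence the maximal ideal is not associated once $c \ge 3$, uniformly in both of your subcases.

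Your transverse-case monomial argument is correct but reproves a special case of this. The genuine gap is the non-transverse case. The proposed Mayer--Vietoris route from $J = F_1\tilde J_2 + F_2\tilde J_1$ does not bound $\mathrm{Ass}(R/J)$: from
\[
0 \to R/(I_1\cap I_2) \to R/I_1 \oplus R/I_2 \to R/(I_1+I_2) \to 0
\]
one controls associated primes of the \emph{intersection}, not of the sum $I_1+I_2$. Your parenthetical justification is also off: $\sqrt{\tilde J_i}$ defines the singular locus of $\medcup_{j\in T_i} D_j$, namely $\medcup_{j\neq k}(D_j\cap D_k)$, not $\medcap_{j\in T_i} D_j$. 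And the observation that associated primes of $\tilde J_i$ have codimension at most $c_i$ is automatic in a ring of dimension $c_i$, so it carries no information. What you are missing is precisely Faber's intersection identity (or an equivalent), which rewrites $F_1\tilde J_2 + F_2\tilde J_1 + (F_1F_2)$ as an intersection whose pieces are visibly supported away from the maximal ideal. (A minor point: the singular subscheme is cut out by $(J_F,F)$, not by $J_F$ alone; you should carry $F_1F_2$ along throughout.)
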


In light of the resolution procedure described in
Proposition~\ref{prop:resolution},
this statement has the following intuitive explanation.  Let $E$ be as in the
theorem.  By repeatedly blowing up the primary components of $S$ and their
strict transforms, we make the $D_i$ pairwise disjoint and hence $S$ empty.
Since this can be done without ever blowing up a strict transform of $E$,
it must not have been a genuine component of~$S$.

The theorem can be rephrased as follows: if $\cod E > 2$ and $E$ is supported
on a primary component of
$S$, then $E$ is not splayed.  We pose the converse as the following question.
\begin{question}
  Let $\{D_1,\ldots,D_n\}$ be an arrangement of divisors on $X.$
  Let $E = \medcap D_i$, and suppose that $\cod E >2$ and $E$ is not
  supported on a primary component of $S$.  Does this imply that $E$ is splayed?
\end{question}
A positive answer to this question would allow one to run
Algorithm~\ref{alg:resolve} by computing the primary decomposition of the
singular subscheme of $\medcup D_i$ and repeatedly blowing up the minimal
components whose support does not appear (which correspond to the ideals
maximal among the ideals of these components).

Recall that a primary decomposition of an ideal
$$I = \q_1\cap \cdots \cap \q_k$$
is minimal if the radicals $\sqrt{\q_i}$ are distinct and the collection of primary ideals is {\it irredundant}, i.e. for any $j$ we have
$$\q_j \not\supset \bigcap_{\substack{i = 1 \\ i \neq j}}^k \q_i.$$

We first note the following lemma.
\begin{lemma}[{\cite[Exercise 4.7 (e)]{a-m}}]
  Let $I$ be an ideal in a commutative ring $A$.
  Let $I = \q_1\cap \cdots \cap \q_n$ be a primary decomposition.
  Then $I[t] =\q_1[t] \cap \cdots \cap \q_n[t]$ is a primary decomposition in $A[t]$.
\end{lemma}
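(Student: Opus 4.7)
The plan is to verify two separate facts. First, that the extension $A \hookrightarrow A[t]$ commutes with finite intersections of ideals, so $I[t] = \medcap_i \q_i[t]$. Second, that for each $i$ the extended ideal $\q_i[t]$ remains primary in $A[t]$, with radical $\p_i[t]$, where $\p_i := \sqrt{\q_i}$.

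The intersection step is essentially formal, since $A[t]$ is free as an $A$-module with basis $\{t^j\}_{j \ge 0}$. For any ideal $J \subseteq A$, a polynomial $f = \sum_j a_j t^j$ lies in the extension $J[t]$ if and only if every coefficient $a_j$ lies in $J$. Applying this to each $\q_i$ and intersecting over $i$ gives $f \in \medcap_i \q_i[t]$ if and only if $a_j \in \medcap_i \q_i = I$ for all $j$, which is the condition $f \in I[t]$.

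The primary step is the heart of the matter. The cleanest route uses the characterization that $\q \subseteq A$ is primary if and only if every zero divisor in $A/\q$ is nilpotent. Writing $B := A/\q_i$, we have $A[t]/\q_i[t] \cong B[t]$, so it suffices to show that every zero divisor in $B[t]$ is nilpotent. Here I would invoke McCoy's theorem: a polynomial $f \in B[t]$ is a zero divisor if and only if there exists a nonzero constant $b \in B$ with $bf = 0$ (proved by taking a nonzero $g \in B[t]$ of minimal degree with $fg = 0$ and showing $g$ must be constant by inspecting the leading coefficient of $fg$). If $bf = 0$, then $b$ annihilates each coefficient of $f$, so every coefficient of $f$ is a zero divisor in $B$, and hence is nilpotent because $\q_i$ is primary. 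A polynomial with nilpotent coefficients is itself nilpotent: if $a_j^{N_j} = 0$ for each $j$, then each term of $f^{\sum_j N_j}$ contains some $a_j$ to a power at least $N_j$ by the pigeonhole principle, and therefore vanishes. Thus every zero divisor in $B[t]$ is nilpotent, so $\q_i[t]$ is primary. For the radical, a degree-induction argument applied to the leading coefficient of $f$ with $f^n \in \q_i[t]$ shows $\sqrt{\q_i[t]} = \p_i[t]$, and this is prime because $A[t]/\p_i[t] \cong (A/\p_i)[t]$ is a domain.

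The main obstacle is simply recalling McCoy's theorem; once it is in hand, the rest is routine bookkeeping. Should minimality of the decomposition also be desired (which is what is actually used earlier in the section), it transfers from $A$ to $A[t]$ at no additional cost: the radicals $\p_i[t]$ remain distinct since they contract to the distinct $\p_i$, and irredundancy passes back through contraction to $A$ via the identity $\q[t] \cap A = \q$, itself an immediate consequence of the freeness of $A[t]$ over $A$.
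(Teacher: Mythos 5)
Your proof is correct. The paper offers no argument of its own here---it simply cites Atiyah--Macdonald, Exercise 4.7(e)---and your two steps (freeness of $A[t]$ over $A$ to commute extension with finite intersection, then McCoy's lemma applied to $(A/\q_i)[t]$ to see that zero divisors are nilpotent, hence $\q_i[t]$ is primary with radical $\sqrt{\q_i}[t]$) constitute the standard solution to that exercise; McCoy's lemma is itself Exercise 2(iii) of Chapter 1 there. Your closing remark on minimality is a worthwhile addition, since the application in Theorem~\ref{thm:decomp-disjoint} does need the extended decomposition to remain irredundant with distinct radicals.
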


\begin{defn}\label{def:strong-loc}
  Let $X$ be a variety and $p$ a point of $X$.  A {\em strong localization}
  of the local ring $\O_{X,p}$ at $p$ is either its henselization or its
  completion (all arguments will apply equally to both).
\end{defn}

Write $\overline{x}$ for $x_1,\ldots,x_n$ and $\overline{y}$ for $y_1,\ldots,y_m$. Let $S = k[\overline{x},\overline{y}]$.  Let
$f_1,\ldots,f_N \in k[\overline{x}]$ and $g_1,\ldots,g_M \in k[\overline{y}]$
be irreducible polynomials which generate distinct principal ideals.
Let $F = \prod f_i$ and $G =\prod g_j$.  Note that
$\medcap_i V(f_i) \cap \medcap_j V(g_j)$ is splayed by
$$(\{V(f_1),\ldots,V(f_N)\},\{V(g_1),\ldots,V(g_M)\}$$ and any splaying will
have a similar form for a strong localization.

Given a polynomial $f \in S$ we write $$J_f = \left(\frac{\partial f}{\partial x_1},\ldots,\frac{\partial f}{\partial x_n},\frac{\partial f}{\partial y_1},\ldots,\frac{\partial f}{\partial y_m}\right)$$ for the Jacobian ideal.
In \cite{faber}, the following statement 
is proved in the context of germs of holomorphic functions, but the proof
applies equally well to polynomial rings or Henselian local rings, and in
particular to complete local rings.
\begin{lemma}\cite[Prop.~3.5]{faber}
  For polynomials $F\in k[\overline{x}]$ and $G\in k[\overline{y}]$ in $S$ we have the decomposition
  $$(J_{FG},FG) = (F,G) \cap (J_F,F) \cap (J_G,G).$$
\end{lemma}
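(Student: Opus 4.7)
The plan is to prove the equality by showing both inclusions; only the reverse inclusion requires real work. The easy direction $(J_{FG},FG) \subseteq (F,G) \cap (J_F,F) \cap (J_G,G)$ reduces to checking generators: since $F$ depends only on $\overline{x}$ and $G$ only on $\overline{y}$, the generators of $J_{FG}$ are the $G\,\partial_{x_i}F$ and $F\,\partial_{y_j}G$, and each of these (along with $FG$) lies in all three ideals on the right by inspection, using either the factor $G$, the factor $F$, or the membership $\partial_{x_i}F\in J_F$, $\partial_{y_j}G\in J_G$.

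For the reverse inclusion, let $h \in (F,G) \cap (J_F,F) \cap (J_G,G)$. The strategy is to exploit the disjoint-variable decomposition $S = k[\overline{x}] \otimes_k k[\overline{y}]$ to successively rewrite $h$ until it manifestly lies in $(J_{FG},FG)$. First, writing $h = Fc + Gd$ from $h \in (F,G)$, the condition $h \in (J_F,F)\cdot S$ forces $Gd \in (J_F,F)\cdot S$. The key technical point is that $G$ is a non-zero-divisor on $S/(J_F,F)\cdot S$: since $(J_F,F) \subseteq k[\overline{x}]$, the quotient is $(k[\overline{x}]/(J_F,F)) \otimes_k k[\overline{y}]$, which is a free $k[\overline{y}]$-module on any $k$-basis of $k[\overline{x}]/(J_F,F)$, and multiplication by the nonzero element $G$ of the domain $k[\overline{y}]$ is injective on each summand. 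Hence $d \in (J_F,F)\cdot S$, so we may write $d = Fd_1 + \sum_i (\partial_{x_i}F)d_{2,i}$ and substitute to obtain
$$h = F(c+Gd_1) + \sum_i G(\partial_{x_i}F)d_{2,i} = Fc' + w, \quad w \in GJ_F \cdot S.$$
A symmetric application of the same non-zero-divisor argument on $S/(J_G,G)\cdot S$, using that $w \in (G)\subseteq (J_G,G)\cdot S$ to conclude $Fc' \in (J_G,G)\cdot S$ and hence $c' \in (J_G,G)\cdot S$, lets us write $c' = Gc_1 + \sum_j(\partial_{y_j}G)c_{2,j}$. Substituting once more yields
$$h = FGc_1 + \sum_j F(\partial_{y_j}G)c_{2,j} + \sum_i G(\partial_{x_i}F)d_{2,i},$$
which visibly lies in $(FG, FJ_G, GJ_F) = (J_{FG},FG)$.

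The main obstacle is verifying the non-zero-divisor claim beyond the polynomial setting. In the completion or Henselization of a polynomial ring at a point on $\medcap V(f_i) \cap \medcap V(g_j)$, the quotient $S/(J_F,F)\cdot S$ admits an analogous tensor-product-of-factors description (with appropriately completed or Henselized factors), so the same freeness-and-injectivity argument applies; alternatively one may appeal directly to Faber's original argument in \cite[Prop.~3.5]{faber}, which is written for germs of holomorphic functions but goes through unchanged in each of these rings.
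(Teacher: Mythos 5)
Your argument is correct, and it is genuinely different in character from what the paper does: the paper offers no proof at all, simply citing Faber's Proposition 3.5 with the remark that her argument for holomorphic germs carries over to polynomial and Henselian local rings. Your proof is a self-contained elementary verification. The forward inclusion by inspection of generators is fine (using $\partial_{x_i}(FG)=G\,\partial_{x_i}F$ and $\partial_{y_j}(FG)=F\,\partial_{y_j}G$, so that $(J_{FG},FG)=(FG)+GJ_F+FJ_G$), and the reverse inclusion is carried entirely by the observation that $G$ is a non-zero-divisor on $S/(J_F,F)S\cong\bigl(k[\overline{x}]/(J_F,F)\bigr)\otimes_k k[\overline{y}]$, which is exactly right: this quotient is free over $k[\overline{y}]$, so multiplication by a nonzero $G\in k[\overline{y}]$ is injective, and the two successive substitutions land $h$ visibly in $(FG)+GJ_F+FJ_G$. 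For the lemma as actually stated in the paper ($F,G$ polynomials, $S=k[\overline{x},\overline{y}]$) your proof is complete and rigorous. The only place where you are slightly loose is the final paragraph: in the completion the ring is a completed tensor product, not $k[[\overline{x}]]\otimes_k k[[\overline{y}]]$, so the freeness claim does not transfer verbatim. It can be repaired — for $A=k[[\overline{x}]]/(J_F,F)$ one has $S/(J_F,F)S\cong A[[\overline{y}]]$, and a nonzero $G\in k[[\overline{y}]]$ is a non-zero-divisor there because the lowest-degree homogeneous parts multiply injectively in $A\otimes_k k[\overline{y}]$ by the same freeness argument; the Henselization then follows by faithfully flat descent from the completion. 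Since the paper itself waves at this extension with a one-line remark, your treatment is no less careful than the source, but if you want the strong-localization version on record you should include the lowest-degree-term argument explicitly.
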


\begin{thm}\label{thm:decomp-disjoint} Let $k$ be an algebraically closed field.  Suppose that  $F = \prod f_i \in k[\overline{x}]$ and $G = \prod g_j \in k[\overline{y}]$, where the $f_i$, $g_j$ are
  irreducible.
  Let $(J_F,F) = \medcap \p_i$ and $(J_G,G) = \medcap \q_j$ be primary decompositions in the rings $k[\overline{x}],k[\overline{y}]$ respectively. 
  Then
  $$(J_{FG},FG) = \bigcap_{i,j} (f_i,g_j) \cap  \bigcap \p_i[\overline{y}] \cap  \bigcap \q_j[\overline{x}]$$
is a minimal primary decomposition.
\end{thm}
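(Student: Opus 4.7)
The plan is to apply the preceding decomposition lemma to rewrite
$(J_{FG}, FG) = (F,G) \cap (J_F, F) \cap (J_G, G)$ in $S$, and then decompose
each of the three factors separately. For $(F,G)$, the pairwise coprimality
of the $f_i$ in the UFD $k[\overline{x}]$ (respectively the $g_j$ in
$k[\overline{y}]$) gives Chinese Remainder decompositions
$k[\overline{x}]/(F) \cong \prod_i k[\overline{x}]/(f_i)$ and
$k[\overline{y}]/(G) \cong \prod_j k[\overline{y}]/(g_j)$; tensoring over $k$
yields $S/(F,G) \cong \prod_{i,j} S/(f_i, g_j)$, and hence
$(F,G) = \bigcap_{i,j} (f_i,g_j)$. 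For the other two factors, the extension
lemma cited above gives $(J_F, F)S = \bigcap_i \p_i[\overline{y}]$ and
$(J_G, G)S = \bigcap_j \q_j[\overline{x}]$ as primary decompositions in $S$.

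Next I would check that the combined decomposition is primary and minimal.
Each $(f_i, g_j)$ is prime because $S/(f_i, g_j) \cong k[\overline{x}]/(f_i)
\otimes_k k[\overline{y}]/(g_j)$ is a tensor product of integral domains
finitely generated over the algebraically closed field $k$, hence itself a
domain; the $\p_i[\overline{y}]$ and $\q_j[\overline{x}]$ are primary by the
extension lemma. Distinctness of radicals within each family comes from the
minimality of the original decompositions; across families the key observation
is that for any proper ideal $\mathfrak{a} \subset k[\overline{x}]$ one has
$\mathfrak{a}[\overline{y}] \cap k[\overline{y}] = 0$, since
$k[\overline{y}] \to k[\overline{x}]/\mathfrak{a} \otimes_k k[\overline{y}]$
is injective ($k[\overline{x}]/\mathfrak{a}$ being a nonzero $k$-vector space),
and symmetrically. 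Thus no nonzero polynomial in $\overline{y}$ alone lies in
$\sqrt{\p_i}[\overline{y}]$, which distinguishes it from $(f_i, g_j)$ (which
contains $g_j$) and from $\sqrt{\q_j}[\overline{x}]$ (which contains some
$g_l$), and similarly on the other side.

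The technically most involved part, and the main obstacle, is irredundancy:
for each primary component I must exhibit an element in the intersection of
every other component but not in the chosen one. For $\p_{k_0}[\overline{y}]$,
choose $h_1 \in \bigcap_{k \neq k_0} \p_k \setminus \p_{k_0}$ by minimality in
$k[\overline{x}]$ and set $h = h_1 G$; the factor $h_1$ puts $h$ in every
other $\p_k[\overline{y}]$, and $G \in \bigcap_l \q_l$ together with
$G \in (g_j)$ puts $h$ in every $\q_l[\overline{x}]$ and every $(f_i, g_j)$,
while the fact that $\p_{k_0}[\overline{y}]$ is primary, together with
$h_1 \notin \p_{k_0}$ and $G \notin \sqrt{\p_{k_0}}[\overline{y}]$, forces
$h \notin \p_{k_0}[\overline{y}]$. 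The case $\q_{l_0}[\overline{x}]$ is
symmetric. For $(f_{i_0}, g_{j_0})$ I take
$h = (F/f_{i_0})^N (G/g_{j_0})^N$ with $N$ large; in each other
$(f_{i'},g_{j'})$ either $f_{i'}$ or $g_{j'}$ divides the corresponding
factor, so $h$ lies there. For $\p_k[\overline{y}]$ the key geometric input is
that $V(\sqrt{\p_k})$ is a component of the singular locus of $V(F)$ and so
lies in some $V(f_a) \cap V(f_b)$ with $a \ne b$; hence $\sqrt{\p_k}$ contains
some $f_i$ with $i \ne i_0$, so $F/f_{i_0} \in \sqrt{\p_k}$ and for $N$ large
$(F/f_{i_0})^N \in \p_k$, giving $h \in \p_k[\overline{y}]$, and symmetrically
for $\q_l[\overline{x}]$. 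Finally $h \notin (f_{i_0}, g_{j_0})$ follows from
its primality together with $(f_{i_0}, g_{j_0}) \cap k[\overline{x}] =
(f_{i_0})$ and the fact that $f_{i_0} \nmid F/f_{i_0}$, with the analogous
statement on the $\overline{y}$ side.
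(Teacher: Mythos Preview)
Your overall strategy mirrors the paper's: apply Faber's decomposition lemma, extend the primary decompositions of $(J_F, F)$ and $(J_G, G)$ to $S$, verify primality of $(f_i, g_j)$ via the tensor-product-of-domains fact, and then check distinctness of radicals and irredundancy. The arguments for distinctness and for irredundancy of the $\p_{k_0}[\overline{y}]$ and $\q_{l_0}[\overline{x}]$ components are essentially the paper's (your witness $h_1 G$ is the symmetric counterpart of the paper's $Fa$).

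There is, however, a genuine gap in your irredundancy argument for $(f_{i_0}, g_{j_0})$. You claim that every associated prime $\sqrt{\p_k}$ of $(J_F,F)$ satisfies $V(\sqrt{\p_k}) \subseteq V(f_a) \cap V(f_b)$ for some $a \ne b$, and hence contains some $f_i$ with $i \neq i_0$. But the theorem only assumes the $f_i$ are irreducible, not that the $V(f_i)$ are smooth. If $V(f_{i_0})$ is singular, a primary component $\p_k$ of $(J_F, F)$ can be supported on a piece of the singular locus of $V(f_{i_0})$ that lies on no other $V(f_a)$; then $\sqrt{\p_k}$ contains $f_{i_0}$ but no other $f_i$, so $F/f_{i_0} \notin \sqrt{\p_k}$ and $(F/f_{i_0})^N \notin \p_k$ for any $N$. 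A concrete instance: take $f_1 = x_1^2 - x_2^2 x_3$ and $f_2 = x_3 - 1$ in $k[x_1,x_2,x_3]$; the line $x_1 = x_2 = 0$ is a component of the singular locus of $V(f_1 f_2)$ contained only in $V(f_1)$, so for $i_0 = 1$ your element $h = (F/f_{1})^N(G/g_{j_0})^N = f_2^N(G/g_{j_0})^N$ does not lie in the corresponding $\p_k[\overline{y}]$.

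The paper sidesteps this by choosing the witness differently. Rather than trying to force $F/f_{i_0}$ into each $\sqrt{\p_k}$ individually, it notes that $\cod_{k[\overline{x}]}(J_F, F) \ge 2$ (the singular locus of a reduced hypersurface over a perfect field) while $\cod_{k[\overline{x}]}(f_{i_0}) = 1$, so there exists $a \in (J_F, F) \setminus (f_{i_0}) = \bigl(\bigcap_k \p_k\bigr) \setminus (f_{i_0})$, and symmetrically $b \in \bigl(\bigcap_l \q_l\bigr) \setminus (g_{j_0})$. The element $z = (F/f_{i_0})(G/g_{j_0})\,a\,b$ then lies in every $\p_k[\overline{y}]$ and $\q_l[\overline{x}]$ via the factors $a$ and $b$, and in every other $(f_i, g_j)$ via $F/f_{i_0}$ or $G/g_{j_0}$; primality of $(f_{i_0}, g_{j_0})$ together with $(f_{i_0},g_{j_0}) \cap k[\overline{x}] = (f_{i_0})$ shows $z \notin (f_{i_0}, g_{j_0})$. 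Replacing your $h$ by this $z$ closes the gap.

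One minor side remark: your appeal to the Chinese Remainder Theorem for $k[\overline{x}]/(F) \cong \prod_i k[\overline{x}]/(f_i)$ is not correct as stated, since the $(f_i)$ are coprime in the UFD sense but typically not comaximal (e.g.\ $f_1 = x_1$, $f_2 = x_2$). What you actually need, and what holds, is only the injectivity of $k[\overline{x}]/(F) \to \prod_i k[\overline{x}]/(f_i)$, which follows from $(F) = \bigcap_i (f_i)$; tensoring with $k[\overline{y}]/(G)$ over $k$ preserves injectivity and gives $(F,G) = \bigcap_{i,j} (f_i, g_j)$.
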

\begin{proof}
  Since the tensor product of integral domains over an algebraically closed
  field is an integral domain, which follows from \cite[Corollary 2 to Theorem IV.24]{jacobson},
  and $S/(f_i,g_j) \simeq k[\overline{x}]/(f_i) \otimes k[\overline{y}]/(g_j)$,
  we conclude that $(f_i,g_j)$ is a prime ideal.
  With the above lemmas, we only need to check that the decomposition is
  irredundant and that the radicals are distinct.

  First we show that $\q_d[\overline{x}]$ cannot be removed.
  Since the $\q_i$ are irredundant, there is
  $a \in \medcap \q_i \setminus \q_d \subseteq k[\overline{y}]$.
  Consider the element $Fa$.  Note that $F \in (f_i,g_j)$ for all pairs
  $i,j$ and $F \in (J_F,F) = \medcap \p_i$.  So we see that
  $$Fa \in \bigcap (f_i,g_j) \cap \bigcap \p_i[\overline{y}] \cap \bigcap_{j \neq d} \q_d[\overline{x}].$$
  If $Fa \in \q_d[\overline{x}]$, then
  $F \in \sqrt{\q_d[\overline{x}]} = \sqrt{\q_d}[\overline{x}]$ because
  $a \notin \q_d[\overline{x}]$.
  So $F \in k[\overline{x}] \cap  \sqrt{\q_d}[\overline{x}] =  0$.
  The same argument shows that we cannot remove $\p_i[\overline{y}]$.
  
  Now fix a particular $k,\ell$.  We need to show that 
  $$(f_k,g_\ell) \not\supseteq  \bigcap_{(i,j) \neq (k,\ell)} (f_i,g_j) \cap  \bigcap \p_i[\overline{y}] \cap  \bigcap \q_j[\overline{x}].$$
  First note that $\cod_{k[\overline{x}]} (f_k) =1$ and $\cod_{k[\overline{x}]} (J_F,F) = \cod_{k[\overline{x}]} \medcap \p_i \geq 2$.
  So $V(\medcap \p_i) \not\supseteq V(f_k)$ and so $\medcap \p_i \subseteq \sqrt{\medcap \p_i} \not\subseteq \sqrt{(f_k)} = (f_k)$.  It follows that there is
  $a \in \medcap \p_i \setminus (f_k) \subset k[\overline{x}]$.
  Similarly, there is an element $b \in \medcap \q_i \setminus (g_\ell) \subset k[\overline{y}]$.  Now consider the element
  $$z = \frac{FGab}{f_kg_\ell} \in \bigcap_{i,j \neq k,\ell} (f_i,g_j) \cap \bigcap \p_i \cap \bigcap \q_j.$$
  Suppose that $z \in (f_i,g_j)$.  Since this ideal is prime and $FG/f_kg_\ell$
  is not in $(f_i,g_j)$, we can conclude that $a$ or $b$ is in $(f_i,g_j)$.
  If $a \in (f_i,g_j)$, we have $a \in (f_i,g_j) \cap k[\overline{x}] =(f_i)$ giving a contradiction, and similarly for $b \in (f_i,g_j)$.

  Lastly we show the radicals are distinct.  
  We note that $\sqrt{\p_i[\overline{x}]} = \sqrt{\p_i}[\overline{x}]$. So these ideals are preserved by pushing forward and pulling back:
  $$I \mapsto (k[\overline{x}]\cap I)k[\overline{x},\overline{y}],$$
  whereas this does not hold for the ideals $\q_i$ and $(f_i,g_j)$.  So the
  radicals $\sqrt{p_i}$ are distinct from $\sqrt{q_j},(f_i,g_j)$.  We argue similarly for the $\sqrt{q_i}$ to complete the proof. 
  \end{proof}
A version of the above result for strong localizations follows from the next two lemmas, which are well-known results with standard references.
\begin{lemma} Let $m=(\overline{x},\overline{y})$ and
  $S_{xy}' = k[\overline{x},\overline{y}]_m$ be the localization at the origin.
  We let $S_{xy}$ denote the strong localization of $S$.
  Starting from $S'_x =k[\overline{x}]$
  and $S'_y = k[\overline{y}]$, we similarly define $S_x,S_y$.  Let
  $i_x:S_x \to S_{xy},$ and $i_y : S_y \to S_{xy}$ be the obvious injections.
  Let $\mathfrak{q}$ be a primary ideal in $S_x$.  Then:
  \begin{enumerate}
  \item $i_x(\mathfrak{q})S_{xy}$ is $i_x(\sqrt{\mathfrak{q}})S_{xy}$-primary in $S$;
  \item $\sqrt{i_x(\mathfrak{q})S_{xy}} = i_x(\sqrt{\mathfrak{q}})S_{xy}$;
  \item and the operation $\mathfrak{q} \mapsto i_x(\mathfrak{q})S_{xy}$ preserves minimal primary decompositions.
  \end{enumerate}
\end{lemma}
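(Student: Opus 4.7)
The plan is to deduce all three conclusions from two underlying facts: that $i_x$ is faithfully flat, and that every prime ideal of $S_x$ extends to a prime ideal of $S_{xy}$. Once these are established, the statement becomes a routine consequence of standard results on flat extensions of Noetherian rings.

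First I would record faithful flatness of $i_x$. In the completion case, this is the classical statement that $k[[\overline{x},\overline{y}]] \cong k[[\overline{x}]][[\overline{y}]]$ is faithfully flat over $k[[\overline{x}]]$. In the henselization case, it is the fact that the henselization of a Noetherian local ring is faithfully flat, applied to the identification of $S_{xy}$ with the henselization of the localization $S_x[\overline{y}]_{(\m_x,\overline{y})}$, where $\m_x$ is the maximal ideal of $S_x$. Next I would verify that for any prime $\p \subseteq S_x$, the extension $\p S_{xy}$ is prime. In the completion case this is immediate from $S_{xy}/\p S_{xy} \cong (S_x/\p)[[\overline{y}]]$, a formal power series ring over an integral domain, and therefore itself a domain; the henselization case is analogous, via the identification of the quotient with a henselization of a polynomial ring over the domain $S_x/\p$.

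With these two ingredients in hand, part (1) follows from the standard result that, for a faithfully flat local homomorphism of Noetherian local rings, a $\p$-primary ideal $\q$ extends to a $\p B$-primary ideal provided $\p B$ is prime. Part (2) is then automatic from the inclusions $\p^n \subseteq \q \subseteq \p$, which force $\p S_{xy} \subseteq \sqrt{\q S_{xy}} \subseteq \sqrt{\p S_{xy}} = \p S_{xy}$.

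For part (3), suppose $I = \q_1 \cap \cdots \cap \q_k$ is a minimal primary decomposition in $S_x$. Faithful flatness ensures that intersections commute with extension, so $IS_{xy} = \bigcap_i \q_i S_{xy}$, which by (1) and (2) is already a primary decomposition. Distinctness of the radicals and irredundancy both survive the extension, because faithfully flat ring maps satisfy $J S_{xy} \cap S_x = J$ for every ideal $J$ of $S_x$: a coincidence of two extended radicals or a redundant extended component would contract back to the same phenomenon in $S_x$, violating minimality there. The main obstacle is the primality of $\p S_{xy}$ in the henselization case, since henselization does not in general preserve the domain property; while the completion argument via the power series identification is painless, the henselization step requires either invoking excellence of $S_x/\p$ (so that the question can be transferred to the completion) or a direct argument via the étale local structure of the henselization.
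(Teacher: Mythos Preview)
Your approach is essentially that of the paper: both arguments rest on the faithful flatness of the composite $S_x \to S_x[\overline y]_{\m} \to S_{xy}$ together with the standard behaviour of primary decomposition under flat base change, which the paper packages as a single citation of Bourbaki, Comm.\ Alg.\ IV, \S2.6, Prop.~11. You are more explicit than the paper in isolating the primality of $\p S_{xy}$ as a separate step and in flagging that the henselization case needs its own justification; this is a fair point that the paper's terse citation leaves implicit, and your suggested routes (via excellence, or a direct unibranch argument using that $S_x/\p$ is itself henselian so its normalization is local) both complete the step.
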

\begin{proof}
  All three items follow from~\cite[IV, \S 2.6, Prop.~11]{bour-comalg}
  and the fact that the composition $S_x \to S_{x}[\overline{y}]_{m} \to S_{xy}$ is faithfully flat.
\end{proof}
  \begin{lemma}
  Let $U,U'$ be smooth affine open varieties
  over an algebraically closed field $k$.  Let $f\in \O(U)$ and $g\in
  \O(U')$ define normal hypersurfaces.  Let $p\in V(f)$ and $q \in V(g)$.
  Then the strong localization of $V(f,g) \subset U\times U'$
  at the point $(p,q)$ is a normal domain.
  \end{lemma}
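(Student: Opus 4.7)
The plan is to identify $V(f,g) \subset U \times U'$ with the product variety $V(f) \times V(g)$, establish normality of this product, and then deduce normality of the strong localization via excellence.

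Because $f \in \O(U)$ and $g \in \O(U')$ involve disjoint sets of coordinates, the subscheme $V(f,g) \subset U \times U'$ coincides scheme-theoretically with the fibre product $V(f) \times_{\Spec k} V(g)$. An affine neighbourhood of $(p,q)$ therefore has coordinate ring $R = \O(V(f)) \otimes_k \O(V(g))$. Integrality of $R$ follows from $k$ being algebraically closed and both factors being integral, via the same Jacobson reference invoked in the proof of Theorem~\ref{thm:decomp-disjoint}.

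Next, I would verify that $R$ is normal using Serre's criterion $R_1 + S_2$. The singular locus of $\Spec R$ is contained in $V(f)_{\mathrm{sing}} \times V(g) \cup V(f) \times V(g)_{\mathrm{sing}}$, and the normality of the factors forces each piece to have codimension at least two in $\Spec R$, giving $R_1$. The $S_2$ condition is inherited through the faithfully flat extensions $\O(V(f)) \hookrightarrow R$ and $\O(V(g)) \hookrightarrow R$, whose fibres are localizations of tensor products of residue-field extensions of $k$ with one of the factors; these are $S_2$ because $k$ is perfect (indeed algebraically closed) and because each factor is itself $S_2$ by normality.

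Finally, localize $R$ at the maximal ideal corresponding to $(p,q)$: the result is a normal local ring essentially of finite type over $k$, and hence excellent. For an excellent normal local domain, both the henselization and the completion are again normal domains---the henselization because it is a filtered colimit of \'etale-local (hence normal) neighbourhoods, and the completion because excellent rings have geometrically regular formal fibres, so normality ascends to the completion. Either strong localization is therefore a normal domain, as required. The main technical obstacle is the $S_2$ part of Step~2, which relies essentially on the perfectness of $k$; the rest of the argument is routine bookkeeping with excellence.
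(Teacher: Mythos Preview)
Your proof is correct and follows the same overall architecture as the paper's: first show that $R = \O(V(f)) \otimes_k \O(V(g))$ is a normal domain, then pass to the strong localization. The paper does the first step by quoting EGA~IV, 6.14.1 (normality of tensor products over an algebraically closed field) together with the same Jacobson reference for integrality, and does the second step by invoking Zariski's analytic-normality theorem (as in Mumford's \emph{Red Book} and Zariski--Samuel) for the completion and Raynaud for the henselization. You instead prove normality of $R$ by hand via Serre's $R_1+S_2$ and replace the classical references by the excellence machinery. Both routes are standard and lead to the same conclusion with comparable effort.

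One simplification you missed: your $S_2$ verification is more delicate than necessary. Since $V(f,g)$ is a complete intersection in the smooth variety $U \times U'$, it is Cohen--Macaulay, so $S_2$ holds automatically and no flatness/fibre argument is needed. The algebraic closedness of $k$ is genuinely used only for integrality of the tensor product (and, implicitly, in identifying the singular locus of the product for $R_1$); it is not the crux of the $S_2$ step as you suggest.
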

\begin{proof}
  Since $\O(U)/(f)$ and $\O(U')/(g)$ are noetherian integrally closed domains,
  it follows that
  $\O(V(f,g)) = \O(V(f)) \times \O(V(g))$ is an integrally closed domain
  by~\cite[2,6.14.1]{EGAIV} and~\cite[Corollary 2 to Theorem IV.24]{jacobson}. 
  By Zariski's main theorem for power series~\cite[\S 9, III]{MumfordRed},~\cite[vol.~2, pp.~313-320]{ZariskiSamuel}, the completion at $(p,q)$ is
  an integrally closed domain.  
  The same is true for the henselization by~\cite[Ch.~VIII, \S 4, Th.~3]{RaynaudHensel}.
\end{proof}
Applying these lemmas to the proof of Theorem~\ref{thm:decomp-disjoint} gives the following corollary.
\begin{cor}
  Let $X$ be a smooth variety over an algebraically closed field.
  Let $p \in X$ be a point and let $\{D_1,\ldots,D_N,D_1',\ldots,D_M'\}$
  be a set of divisors in $X$ with $p \in D_i,D_j'$ for all $i$ and $D_i,D_j'$
  locally normal at $p$.  Write $D_i = V(f_i)$ and $D_j'=V(g_j)$ for
  $f_i,g_j$ in a strong localization of $\O_{X,p}$.
  Suppose further that a set $\{x_1,\ldots,x_n,y_1,\ldots,y_m\}$ of generators
  $\m/\m^2$, where $\m$ is the maximal ideal of the strong localization,
  can be chosen such that $f_i$
  is expressed in terms of the $x_i$ and the $g_j$ are expressed in terms of
  the $y_j$.  Then the primary components of the singular subscheme of
  $\medcup_i D_i\cup \medcup D_j'$ are the $V(f_i,g_j)$ and the primary
  components of the singular subschemes $\medcup D_i$ and $\medcup D_j'$.
  \end{cor}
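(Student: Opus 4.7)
The plan is to reduce this corollary to Theorem~\ref{thm:decomp-disjoint} by using the two lemmas immediately preceding it as translation devices from polynomial rings to strong localizations. Let $R$ denote the strong localization of $\O_{X,p}$ and let $R_x, R_y \subset R$ be the strong localizations of $k[x_1,\ldots,x_n]$ and $k[y_1,\ldots,y_m]$ at the origin; the hypothesis that the $f_i$ involve only the $x_j$ and the $g_j$ involve only the $y_j$ ensures that these inclusions are well defined, and the first lemma supplies the crucial fact that $R_x \to R$ and $R_y \to R$ are faithfully flat and preserve minimal primary decompositions under extension.

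In $R_x$ one obtains a primary decomposition $(J_F, F) = \bigcap_i \p_i$ from Theorem~\ref{thm:decomp-disjoint} applied to $F$ alone, and similarly $(J_G, G) = \bigcap_j \q_j$ in $R_y$. Faber's decomposition $(J_{FG}, FG) = (F,G) \cap (J_F, F) \cap (J_G, G)$ uses only that the variables split cleanly into two disjoint groups, and therefore applies verbatim in $R$. Extending the decompositions of $(J_F, F)$ and $(J_G, G)$ along the faithfully flat maps via the first lemma, and using the second lemma to conclude that each $(f_i, g_j)R$ is prime (its quotient being a normal domain), we obtain
$$
(J_{FG}, FG)R \;=\; \bigcap_{i,j}(f_i,g_j)R \;\cap\; \bigcap_i \p_i R \;\cap\; \bigcap_j \q_j R,
$$
a primary decomposition whose components have exactly the supports asserted in the statement.

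The remaining and most delicate task is to verify minimality, that is, irredundancy of the components and distinctness of their radicals. Here one transcribes the argument from the proof of Theorem~\ref{thm:decomp-disjoint}, systematically replacing each appearance of the polynomial subring by the corresponding strong-localization subring. The witnessing elements $a \in \bigcap \p_i \setminus (f_k)$ and $b \in \bigcap \q_j \setminus (g_\ell)$ remain outside the relevant extended ideals in $R$ by faithful flatness, and the product $z = FGab/(f_k g_\ell)$ plays the same separating role as before. Distinctness of the radicals follows from the observation that $\p_i R$ and $\q_j R$ are "cylindrical" in complementary directions, in the sense that each is extended from a contracted ideal in $R_x$ or $R_y$, while the ideals $(f_i, g_j)R$ are not; faithful flatness of $R_x \to R$ and $R_y \to R$ is precisely what is needed to import the distinguishing computation from the polynomial setting. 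I expect no conceptual obstacle beyond what is already present in Theorem~\ref{thm:decomp-disjoint}; the main effort lies in the bookkeeping of tracking which ideal lives in which subring and confirming that each use of faithful flatness is justified.
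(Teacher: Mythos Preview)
Your proposal is correct and follows essentially the same approach as the paper, whose proof consists of the single sentence ``Applying these lemmas to the proof of Theorem~\ref{thm:decomp-disjoint} gives the following corollary.'' You have simply unpacked what that sentence means: use the first lemma to transport primary decompositions along the faithfully flat extensions $R_x \to R$ and $R_y \to R$, use the second lemma to see that each $(f_i,g_j)R$ is prime, invoke Faber's identity in $R$, and then rerun the irredundancy and distinct-radicals arguments from Theorem~\ref{thm:decomp-disjoint} with the polynomial subrings replaced by $R_x$ and $R_y$. One small slip: the primary decomposition $(J_F,F)=\bigcap_i \p_i$ in $R_x$ is not obtained \emph{from} Theorem~\ref{thm:decomp-disjoint}; it is simply any minimal primary decomposition in the Noetherian ring $R_x$, taken as input just as in the theorem.
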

  
We now complete the proof of Theorem \ref{thm:pc}.

\begin{proof}
  The statement is purely local and we are assuming that the $D_i$ are smooth
  and pairwise transverse.  Let $p \in E$: we start by passing to the strong
  localization at $p$, so that we may assume that $X = \A^n$ and that the
  $D_i$ are hyperplanes.
  Then all intersections of
  $D_i$ are linear subspaces of $\A^n$; in particular, they are irreducible.

  Suppose that $E$ is splayed by $(E_1,E_2)$.  We may choose coordinates
  $x_{1,j}, x_{2,k}$ such that the elements of $E_i$ are expressed in terms of
  the $x_{i,*}$.  By definition the $E_i$ are nonempty, so there is at least
  one coordinate of each type.  Then, by Theorem \ref{thm:decomp-disjoint},
  there is a primary decomposition of $S$ supported on the $(x_{1,j}, x_{2,k})$
  and on ideals generated by elements supported only on the $x_{1,j}$ or the
  $x_{2,k}$.  Only the first type of this can coincide with $E$, but if so then
  $E$ has codimension $2$.
\end{proof}

{\em Acknowledgments.}  We would like to thank Eleonore Faber for some very
helpful discussions of the concept of a splayed set of divisors, and
S. Cynk and K. Hulek for clarifying a point in their paper \cite{cynk-hulek}.
C.~Ingalls was partially supported by an NSERC Discovery Grant.
A.~Logan would like to thank the Tutte Institute for Mathematics and Computation
for its support of his external research.
  
\bibliography{forCHpaper}{}

\begin{thebibliography}{10}

\bibitem{a-m}
M.~F. Atiyah and I.~G. Macdonald.
\newblock {\em Introduction to commutative algebra}.
\newblock Addison-Wesley Series in Mathematics. Westview Press, Boulder, CO,
  economy edition, 2016.
\newblock For the 1969 original see [MR0242802].

\bibitem{bour-comalg}
Nicolas Bourbaki.
\newblock {\em Commutative algebra. {C}hapters 1--7}.
\newblock Elements of Mathematics (Berlin). Springer-Verlag, Berlin, 1998.
\newblock Translated from the French, Reprint of the 1989 English translation.

\bibitem{bkm}
T.~Bridgeland, A.~King, and M.~Reid.
\newblock The {M}c{K}ay correspondence as an equivalence of derived categories.
\newblock {\em J. Amer. Math. Soc.}, 14:535--554, 2001.

\bibitem{cynk-hulek}
S.~Cynk and K.~Hulek.
\newblock Higher-dimensional modular {C}alabi-{Y}au manifolds.
\newblock {\em Canad. Math. Bull.}, 50(4):486--503, 2007.

\bibitem{csv}
S.~Cynk, M.~Sch\"utt, and D.~van Straten.
\newblock Hilbert modularity of some double octic {C}alabi-{Y}au threefolds.
\newblock {\em J. Number Theory}, 210:313--332, 2020.

\bibitem{cynk-szemberg}
S.~Cynk and T.~Szemberg.
\newblock Double covers of ${{\mathbb P}}^3$ and {C}alabi-{Y}au varieties.
\newblock {\em Banach Center Publ.}, 44:93--101, 1998.

\bibitem{faber}
Eleonore Faber.
\newblock Towards transversality of singular varieties: splayed divisors.
\newblock {\em Publ. Res. Inst. Math. Sci.}, 49(3):393--412, 2013.

\bibitem{EGAIV}
A.~Grothendieck.
\newblock \'{E}l\'{e}ments de g\'{e}om\'{e}trie alg\'{e}brique. {IV}. \'{E}tude
  locale des sch\'{e}mas et des morphismes de sch\'{e}mas {IV}.
\newblock {\em Inst. Hautes \'{E}tudes Sci. Publ. Math.}, (32):361, 1967.

\bibitem{jacobson}
Nathan Jacobson.
\newblock {\em Lectures in abstract algebra, {III}. Theory of fields and Galois
  theory}.
\newblock Springer-Verlag, New York-Heidelberg, 1975.
\newblock Second corrected printing, Graduate Texts in Mathematics, No. 32.

\bibitem{kawamata}
Yujiro Kawamata.
\newblock $d$-equivalence and $k$-equivalence.
\newblock {\em J. Differential Geom.}, 61(1):147--171, 2002.

\bibitem{matsuki}
Kenji Matsuki.
\newblock {\em Introduction to the Mori program}.
\newblock Universitext. Springer-Verlag, Berlin, 2002.

\bibitem{meyer}
Christian Meyer.
\newblock {\em Modular {C}alabi-{Y}au threefolds}, volume~22 of {\em Fields
  Institute Monographs}.
\newblock American Mathematical Society, 2005.

\bibitem{MumfordRed}
David Mumford.
\newblock {\em The red book of varieties and schemes}, volume 1358 of {\em
  Lecture Notes in Mathematics}.
\newblock Springer-Verlag, Berlin, expanded edition, 1999.
\newblock Includes the Michigan lectures (1974) on curves and their Jacobians,
  With contributions by Enrico Arbarello.

\bibitem{orlik-terao}
Peter Orlik and Hiroaki Terao.
\newblock {\em Arrangements of hyperplanes}, volume 300 of {\em Grundlehren der
  Mathematischen Wissenschaften [Fundamental Principles of Mathematical
  Sciences]}.
\newblock Springer-Verlag, Berlin, 1992.

\bibitem{RaynaudHensel}
Michel Raynaud.
\newblock {\em Anneaux locaux hens\'{e}liens}.
\newblock Lecture Notes in Mathematics, Vol. 169. Springer-Verlag, Berlin-New
  York, 1970.

\bibitem{ZariskiSamuel}
Oscar Zariski and Pierre Samuel.
\newblock {\em Commutative algebra. {V}ol. {II}}.
\newblock The University Series in Higher Mathematics. D. Van Nostrand Co.,
  Inc., Princeton, N. J.-Toronto-London-New York, 1960.

\end{thebibliography}
\end{document}